\documentclass[onefignum,onetabnum]{siamonline250211}

\usepackage{iftex}

\usepackage[english]{babel}
\ifpdftex
    \usepackage[T1]{fontenc}

    \input{glyphtounicode}
\else
    \usepackage{fontspec}
\fi

\usepackage{xcolor}
\usepackage{graphicx}
\usepackage{tikz}

\usepackage{amssymb}
\usepackage{nicefrac}
\usepackage{textcomp}

\usepackage[shortlabels]{enumitem}
\usepackage{booktabs}
\usepackage{caption}
\usepackage{subcaption}
\usepackage[title]{appendix}

\usepackage[activate={true,nocompatibility},final]{microtype}

\newif\ifreview
\reviewfalse

\ifreview
    \definecolor{MarkRedo}{HTML}{A2142F}
    \definecolor{MarkRevise}{HTML}{D95319}
    \definecolor{MarkRead}{HTML}{0072BD}

    \newcounter{afnote}
    \newcounter{eknote}
    \newcounter{lfnote}
    \NewDocumentCommand \circlednumber {m} {%
        \tikz[baseline=(char.base)]{
            \node[shape=circle, draw, fill=MarkRead, text=white,
                  inner sep=1.5pt, font=\scriptsize\sffamily\bfseries] (char) {#1};
        }%
    }

    \NewDocumentCommand \afnote { O{note} +m } {%
        \stepcounter{afnote}%
        \textcolor{gray!60}{[#1]} \textcolor{MarkRead}{\circlednumber{\theafnote} \textbf{Alex}: #2}}
    \NewDocumentCommand \eknote { O{note} +m } {%
        \stepcounter{eknote}%
        \textcolor{gray!60}{[#1]} \textcolor{MarkRead}{\circlednumber{\theeknote} \textbf{Eva}: #2}}
     \NewDocumentCommand \lfnote { O{note} +m } {%
        \stepcounter{lfnote}%
        \textcolor{gray!60}{[#1]} \textcolor{MarkRead}{\circlednumber{\thelfnote} \textbf{Lavinia}: #2}}
    
    \newenvironment{revision}{\color{MarkRevise}}{}
\else
    \definecolor{MarkRedo}{HTML}{000000}
    \definecolor{MarkRevise}{HTML}{000000}
    \definecolor{MarkRead}{HTML}{000000}

    \NewDocumentCommand \afnote { O{note} m } {}
    \NewDocumentCommand \eknote { O{note} m } {}
    \NewDocumentCommand \lfnote { O{note} m } {}

\fi

\setlist[enumerate]{leftmargin=0.25in}
\setlist[itemize]{leftmargin=0.25in}

\NewDocumentCommand \ii {} {\mathrm{i}}
\NewDocumentCommand \coloneq {o} {\mathrel{\mathop:}\mathrel{\mkern-1.2mu}=}
\NewDocumentCommand \dx {O{x}} {\,\mathrm{d} #1}
\NewDocumentCommand \conj { m } {\bar{#1}}
\NewDocumentCommand \iffarrow { } {\Longleftrightarrow}
\NewDocumentCommand \pd {s m m} {%
    \IfBooleanTF#1{\partial_{#3} #2}{\dfrac{\partial #2}{\partial #3}}
}

\NewDocumentCommand \vect { m } { \boldsymbol{#1} }
\NewDocumentCommand \matr { m } { \boldsymbol{#1} }

\DeclareMathOperator{\sign}{sign}

\newsiamremark{remark}{Remark}
\newsiamremark{example}{Example}
\crefname{example}{Example}{Examples}

\title{
Macroscopic Multistability and Bifurcations in Theta-Neuron Networks with Distributed Delays\thanks{
Submitted to the editors on June 17th 2026.
\funding{This work was supported by UEFISCDI under grant no. ROSUA-2024-0002 (Kaslik, Fikl).}
}}
\headers{Multistability and Bifurcations in Theta-Neuron Networks}{L. Bîrdac, A. Fikl, E. Kaslik and R. Muresan}

\author {Lavinia Bîrdac\thanks{Department of Computer Science, West University of Timişoara, 300223 Timișoara, Romania (\email{lavinia.birdac91@e-uvt.ro}, \email{eva.kaslik@e-uvt.ro}, \email{raluca.muresan@e-uvt.ro})},
\and Alexandru Fikl\thanks{Institute for Advanced Environmental Research, West University of Timișoara, 300086 Timișoara, Romania (\email{alexandru.fikl@e-uvt.ro})}
\and Eva Kaslik\footnotemark[2]
\and Raluca Mureșan\footnotemark[2]}

\begin{document}

\maketitle

\begin{abstract}
We study an all-to-all coupled network of identical theta neurons with synaptic
interaction mediated by a distributed time delay. Using the Watanabe--Strogatz reduction
and passing to the thermodynamic limit under the assumption of uniformly distributed
constants of motion, we derive a single delay differential equation for the complex order
parameter. The delay is modeled by a family of delay kernels with prescribed mean delay,
allowing discrete and distributed delays to be treated in a unified framework.
The equilibria of the reduced system can be classified into two geometrically distinct
families: type~1 equilibria on the unit circle and type~2 equilibria on the real
axis. For both families, the local stability problem reduces to scalar
characteristic equations involving the Laplace--Stieltjes transform of the delay
kernel. We obtain stability criteria for admissible kernels and explicit Hopf
bifurcation conditions for the Dirac kernel, with additional comparison to weak
and strong Gamma kernels. The results show that the delay may either preserve
stability, destabilize equilibria, or produce stability switching, depending on
the equilibrium branch, parameter regime, and choice of kernel.
Numerical simulations for the discrete-delay case support the analytical results and
illustrate the corresponding phase portraits, basins of attraction, coexistence of
attractors, and delay-induced periodic dynamics.
\end{abstract}

\begin{keywords}
theta neuron, distributed delays, stability switching, phase oscillators
\end{keywords}

\begin{MSCcodes}
34C15,      
34K13,      
34K18,      
34K20,      
92B20      
\end{MSCcodes}

\section{Introduction}
\label{sec:introduction}

Large populations of interacting oscillatory units provide a basic mathematical
framework for the study of collective dynamics in physics, biology, and neuroscience. In
neuronal systems, such collective dynamics include synchronization, coherent
oscillations, multistability, and transitions between quiescent and firing regimes. At
the microscopic level, these phenomena are often described by high-dimensional nonlinear
systems, while experimentally observable activity is typically macroscopic (mean firing
rates, order parameters, or population-averaged currents). Deriving reductions that
retain the relevant collective dynamics, but are tractable enough for rigorous
stability and bifurcation analysis, remains a key objective. Phase oscillator
models have been particularly effective in this context~\cite{Kuramoto1984,Strogatz2000}.

The theta neuron is a canonical phase model for type-I neuronal excitability. It can be
obtained as the normal form of a saddle-node bifurcation on an invariant circle and
provides a tractable description of spike generation in terms of a phase variable
\cite{Ermentrout1996,ErmentroutTerman2010}. Because of its analytic structure, the theta
neuron model is well-suited for exact reductions of large networks. In particular, for
globally coupled networks of $N > 3$ identical phase oscillators, the Watanabe--Strogatz
theory reduces the to three global variables and $N - 3$ constants of motion
\cite{watanabe1993integrability}. In the thermodynamic limit, and for uniformly
distributed constants of motion, this reduction is closely related to the Ott--Antonsen
ansatz \cite{OttAntonsen2008}, which has become a standard tool for deriving closed
macroscopic equations for large oscillator populations
\cite{luke2013complete,Laing2014,laing2018dynamics}. These reductions make it possible
to pass from a microscopic network description to a small number of collective variables
without relying on moment closures or perturbative approximations \cite{Montbrio2015}.

Time delays arise naturally from finite axonal propagation speeds, synaptic
transmission, dendritic processing, and other physiological mechanisms. Even when the
underlying delay-free system is low-dimensional, delays can create an
infinite-dimensional phase space and may induce oscillatory instabilities, stability
switching, and coexistence of attractors \cite{ErmentroutTerman2010,Campbell2009}. In
a physiological setting, a single discrete delay is overly idealized, since transmission times are
heterogeneous across cells and pathways, and the resulting memory effect is better modeled by a
distributed delay. Mathematically, this leads to convolution terms with delay kernels
and to characteristic equations involving the Laplace--Stieltjes transform of those
delay kernels \cite{MacDonald1978,Kuang1993,HaleVerduynLunel1993}.

Laing~\cite{laing2018dynamics} investigated finite and infinite all-to-all coupled
networks of identical theta neurons with instantaneous and delayed synaptic
interactions. Delay effects have also been analyzed in spatially extended theta
neuronal networks and delay-coupled phase-oscillator arrays~\cite{Laing2016Travelling},
as well as in small delayed theta-neuron systems, including delayed self-feedback and
pairs of delay-coupled excitable theta neurons~\cite{LaingKrauskopf2022ThetaFeedback,
LaingKrauskopf2025PairDelay}. Closely related results are available for quadratic
integrate-and-fire networks, which are equivalent to theta-neuron networks under a
standard change of variables. These include networks with synaptic delay and
distributed-delay coupling~\cite{PazoMontbrio2016,DevalleMontbrioPazo2018SynapticDelay,RatasPyragas2018DistributedDelay}.
However, the stability and bifurcation structure of
an all-to-all coupled identical theta-neuron network with distributed synaptic delay
kernels has not yet been systematically analyzed.

In particular, there are several open questions regarding how the equilibrium geometry
of the reduced macroscopic system interacts with the choice of delay kernel and how
discrete and distributed delays differ in generating delay-induced qualitative changes
in the system's dynamics. In this work, we derive the Watanabe--Strogatz reduced
macroscopic dynamics of the delayed theta-neuron network in the thermodynamic limit,
under the assumption of uniformly distributed constants of motion. The resulting complex
order-parameter equation, which is equivalent to a two-dimensional system in polar
coordinates, provides the basis for a systematic local stability and bifurcation
analysis. The delay kernel enters the characteristic equations through its
Laplace--Stieltjes transform, which unifies the treatment of discrete and distributed
delays. We apply this framework to a general class of admissible kernels and provide
specific results for the Dirac, weak Gamma, and strong Gamma kernels. The theoretical
results are supported by numerical simulations that show the resulting phase portraits,
basins of attraction, and delay-induced changes in representative parameter regimes.

The remainder of the paper is organized as follows. In~\Cref{sec:description} we
introduce the theta-neuron network with distributed delay, define the admissible kernel
families, and present the reduced macroscopic model. In~\Cref{sec:local.stability} we
describe the equilibrium equations and obtain the general characteristic equation.
The local stability and bifurcation analysis of type~1 equilibria is carried out in
\Cref{sec:stability.type1}, while the corresponding analysis for type~2 equilibria is
given in~\Cref{sec:stability.type2}. In~\Cref{sec:classification.figure} we combine
these results into a local bifurcation-based classification for the discrete-delay
case. Finally, \Cref{sec:num} presents numerical simulations illustrating the main
stability regimes and the dependence of the observed attractors on the delay. Conclusions and directions for future research are formulated in \Cref{sec:conclusions}.

\section{Description of the mathematical model}
\label{sec:description}

\subsection{Theta neuron network}
\label{sec:description.theta}

Following~\cite{luke2013complete,so2014networks,laing2018dynamics}, we study a network
of $N$ identical theta neurons all-to-all coupled through a synaptic current $I$. The
synaptic current models the time-varying excitability induced by the collective activity
of the network. In this work, we extend the model to include a distributed delay in $I$
to account for finite transmission delays between neighboring neurons.

The state of a neuron $j$ at time $t$ is given by the phase variable $\theta_j(t) \in
\mathbb{R}/ 2 \pi \mathbb{Z}$ on the unit circle. The network dynamics are given
by the following system of $N$ autonomous differential equations with distributed delay:
\begin{equation} \label{eq:sys.theta}
\dot{\theta}_j(t) =
    1 - \cos (\theta_j(t))
    + \left[1 + \cos (\theta_j(t))\right]
    \left[\eta + \kappa (l_\tau \ast I)(t)\right],
\quad j \in \{1, 2, \dots, N\},
\end{equation}
where the parameters $\eta \in \mathbb{R}$ and $\kappa \in \mathbb{R}$ represent a fixed
input current for each individual neuron and the overall coupling strength of the
network, respectively. The distributed delay enters the dynamics through the convolution
$l_\tau \ast I$ of the synaptic current $I$ with the delay kernel $l_\tau$
(see~\Cref{def:admissible.kernels}), where $\tau \ge 0$ represents the mean delay.
The input synaptic current $I$ is defined as an average of all the pulses emitted
between neurons and is given by
\begin{equation}\label{eq:synaptic.current}
I(t) \coloneq \frac{1}{N} \sum_{j = 1}^N \left[1 - \cos (\theta_j(t))\right]^2.
\end{equation}

\subsection{Admissible delay kernels}
\label{sec:description.admissible.kernels}

In~\eqref{eq:sys.theta}, the delayed synaptic input is described by a family of delay
kernels \(l_\tau\), where \(\tau\ge0\) denotes the mean delay. To include cases such as
the discrete Dirac delay kernel $\delta_\tau$, we consider a general class of functions
and state the convolution in a standard measure form. For a bounded Borel function
\(f\), we define
\[
(l_\tau \ast f)(t)
\coloneq
\int_{[0,\infty)} f(t-s) \dx[l_\tau(s)],
\]
which simplifies to a more common form for absolutely continuous kernels
(see~\Cref{rem:admissible.density.form}). The case \(\tau=0\) represents the delay-free
equation and is given by $l_0 \equiv \delta_0$ and $(l_0\ast f)(t) = f(t)$.

In the remainder of this paper, we work with delay kernels obtained by rescaling a reference
probability measure with unit mean. The exponential moment condition below ensures that
the associated Laplace--Stieltjes transform is well-defined (see, for example, \cite[Chapter~1]{HaleVerduynLunel1993}).

\begin{definition}[Admissible delay kernel families]
\label{def:admissible.kernels}
Let \(\mathcal M([0,\infty))\) denote the set of Borel probability measures on
\([0,\infty)\). A family of delay kernels
\(\{l_\tau\}_{\tau>0}\subset \mathcal M([0,\infty))\) is called
\emph{admissible} if there exist a Borel probability measure
\(\mu\in\mathcal M([0,\infty))\) and a constant \(\sigma>0\) such that
\[
\int_{[0,\infty)} u \dx[\mu(u)] = 1,
\qquad
\int_{[0,\infty)} e^{\sigma u} \dx[\mu(u)] < \infty,
\]
and, for every \(\tau>0\) and every bounded Borel function
\(\varphi:[0,\infty)\to\mathbb C\),
\[
\int_{[0,\infty)} \varphi(s) \dx[l_\tau(s)]
=
\int_{[0,\infty)} \varphi(\tau u) \dx[\mu(u)].
\]
Equivalently, \(l_\tau\) is the push-forward of \(\mu\) under the map
\(u\mapsto \tau u\). In particular, for $\varphi(u) \equiv u$,
\[
\int_{[0,\infty)} s \dx[l_\tau(s)] =
\int_{[0, \infty)} \tau u \dx[\mu(u)] = \tau,
\]
so that \(l_\tau\) has mean delay \(\tau\).
\end{definition}

\begin{remark}
\label{rem:admissible.density.form}
If the reference measure \(\mu\) is absolutely continuous with respect to the
Lebesgue measure, then \(\mathrm{d}\mu(u)=h(u)\,\mathrm{d}u\), where \(h\ge0\), and the
conditions in \Cref{def:admissible.kernels} become
\[
\int_0^\infty h(u) \dx[u]=1,
\qquad
\int_0^\infty u h(u) \dx[u]=1,
\qquad
\int_0^\infty e^{\sigma u}h(u) \dx[u]<\infty .
\]
In this case, \(l_\tau\) is also absolutely continuous and has density
\[
s\mapsto \frac{1}{\tau}h\!\left(\frac{s}{\tau}\right).
\]
The discrete-delay case is recovered by taking \(\mu=\delta_1\), where $\delta_1$ is the
Dirac delay kernel concentrated at $u = 1$. Then
\(l_\tau=\delta_\tau\), and
\(
(l_\tau\ast f)(t)=f(t-\tau).
\)
\end{remark}

\begin{remark}
\label{rem:laplace.admissible}
The Laplace--Stieltjes transform of the reference measure \(\mu\) is
\[
L(\xi)
\coloneq
\int_{[0,\infty)} e^{-\xi u} \dx[\mu(u)],
\qquad
\Re \xi>-\sigma .
\]
For an admissible family generated by \(\mu\), the Laplace--Stieltjes transform of
\(l_\tau\) is
\[
\widehat l_\tau(s)
\coloneq
\int_{[0,\infty)} e^{-s r} \dx[l_\tau(r)]
=
L(\tau s),
\]
whenever \(\Re s>-\sigma/\tau\), for
\(\tau>0\). For \(\tau=0\), one has \(\widehat l_0(s)=1\).

The exponential moment assumption implies
that \(L\) is holomorphic in the half-plane \(\{\Re \xi>-\sigma\}\). Moreover,
\[
L(0)=1,
\qquad
L'(0)=-1,
\qquad
|L(\xi)|\le1 \quad \text{for } \Re\xi\ge0.
\]
\end{remark}

The following regularity condition on the phase of the Laplace--Stieltjes transform
is required for the Hopf bifurcation analysis (see~\Cref{sec:type1.hopf} and~\Cref{sec:type2.hopf})
to ensure that the phase of $L(\ii \nu)$ can be continuously tracked.

\begin{definition}[Phase-regular admissible kernels]
\label{def:phase.regular.kernels}
An admissible delay kernel family is called \emph{phase-regular} if
\(L(\ii\nu)\neq0\) for all \(\nu\ge0\) and if there exist continuous functions
\(r:[0,\infty)\to(0,\infty)\) and
\(a:[0,\infty)\to[0,\infty)\) such that
\[
L(\ii\nu)=r(\nu)e^{-\ii a(\nu)},
\qquad \nu\ge0,
\]
where \(r(0) = 1\), \(a(0)=0\), and \(a\) is strictly increasing on \((0,\infty)\).
\end{definition}

The Dirac, weak Gamma, and strong Gamma kernels considered in the following sections are
admissible and phase-regular (shown by direct computation). Their transforms and phase
functions are specified explicitly in~\Cref{sec:type1.hopf}.

\subsection{The reduced model}
\label{sec:description.reduced.model}

The $N$-dimensional theta neuron network~\eqref{eq:sys.theta} admits a lower-dimensional
reduction, under the assumptions stated below. First, the Watanabe--Strogatz
transformation~\cite{watanabe,watanabe1993integrability,Birdac2022} reduces the system to three
global variables and $N - 3$ constants of motion.
Assuming uniformly distributed constants of motion and taking the thermodynamic limit $N
\to \infty$ (equivalent to the Ott--Antonsen reduction~\cite{laing2018dynamics}), we
obtain a single delayed differential equation for the complex order parameter $z(t)$:
\begin{equation} \label{eq:complex}
\dot{z} =
    \ii \left[\eta + \kappa (l_\tau \ast I) + 1\right] z
    + \ii \left[\eta + \kappa (l_\tau \ast I) - 1\right] \frac{1 + z^2}{2},
\end{equation}
where:
\begin{equation}\label{eq:i.inf}
I = I(z) = \frac{3}{2}
    - (z + \bar{z})
    + \frac{(z^2 + \bar{z}^2)}{4}.
\end{equation}

For the subsequent analysis, we express the complex equation in polar coordinates $z =
\rho e^{\ii \phi}$. This results in the following real two-dimensional system of
equations
\begin{equation} \label{eq:sys.2d}
\begin{cases}
\dot\rho
    = \dfrac{1 - \rho^{2}}{2} \left(\eta + \kappa(l_\tau \ast I) - 1\right) \sin (\phi), \\
\dot\phi
    = (\eta + \kappa (l_\tau \ast I) - 1) \left[\dfrac{1 + \rho^2}{2\rho} \cos (\phi) + 1\right] + 2,
\end{cases}
\end{equation}
where
\begin{equation}\label{eq:i.rho.phi}
I = I(\rho, \phi) = \frac{3}{2} - 2 \rho \cos \phi + \frac{1}{2} \rho^2 \cos(2 \phi).
\end{equation}

\section{Local stability and bifurcation analysis: general considerations}
\label{sec:local.stability}

\subsection{Equilibria}
\label{sec:local.stability.equilibria}

From~\eqref{eq:complex}, the equilibrium condition $\dot{z} = 0$ is equivalent to
\begin{equation}\label{eq:equilibrium}
F_\kappa(z) \coloneq \left(\frac{1-z}{1+z}\right)^2 - \kappa I(z) = \eta.
\end{equation}

As $I(z) \in \mathbb{R}$, for any $z \in \mathbb{C}$, it follows that the fraction on
the left-hand side of~\eqref{eq:equilibrium} must also be real. Therefore, $(1 - z) / (1
+ z)$ must be either real or purely imaginary, which corresponds to $z \in \mathbb{R}$
or $|z| = 1$, respectively. However, note that $z=-1$ is not an equilibrium point and
$z=1$ only occurs when $\eta=0$. As in~\cite{Birdac2022}, we consider the following two
types of equilibria that arise as solutions to $F_\kappa(z^\star) = \eta$.

\paragraph{Type 1 equilibria (on the unit circle).} In this first case, we seek
solutions on the unit circle $z^\star= e^{\ii \phi^\star}$, with $\phi^\star \in (-\pi,
\pi)$. The equilibrium equation~\eqref{eq:equilibrium} then reduces to finding the real
roots of the polynomial $P_1(\cos \phi^\star)$:
\begin{equation}\label{eq:P1.type1}
P_1(u) \coloneq \kappa u^3 - \kappa u^2 + (\eta - \kappa - 1) u + (\eta + \kappa + 1).
\end{equation}

\begin{remark}
We note that type 1 equilibria occur in conjugate pairs of the form $e^{\pm \ii \phi^\star}$.
\end{remark}

\paragraph{Type 2 equilibria (on the real axis).} In this second case, we seek solutions
on the real line $z^\star \in (-1, 1)$. The equilibrium
equation~\eqref{eq:equilibrium} then reduces to finding the real roots of the polynomial
\begin{equation} \label{eq:P2.type2}
P_2(u)\coloneq
\frac{\kappa}{2} u^4
- \kappa u^3
+ (\eta -2 \kappa - 1) u^2
+ (2 \eta + \kappa + 2) u
+ \left(\eta + \frac{3\kappa}{2} - 1\right).
\end{equation}

\begin{remark}
The point \(z^\star=1\) is a non-generic intersection of both classes. For \(\eta=0\) in
the delay-free system, it represents a degenerate center~\cite{llibre2016limit} and
has been analyzed in detail in~\cite{Birdac2022}. We, hereafter, exclude this singular
point from the analysis below.
\end{remark}

\subsection{Linearization and characteristic equation}
\label{sec:local.stability.linearization}

Linearizing the system~\eqref{eq:sys.2d} about an arbitrary equilibrium point $(\rho^{\star},
\phi^{\star})$ and letting $\vect{x} \coloneq (\tilde{\rho}, \tilde{\phi}) \coloneq (\rho
- \rho^\star, \phi - \phi^\star)$ denote the small perturbations, we obtain:
\begin{equation} \label{eq:matrix.equation}
\dot{\vect{x}} = \matr{A} \vect{x} + \matr{B}(l_\tau\ast \vect{x}),
\end{equation}
where
\[
\begin{aligned}
\matr{A} & \coloneq (\eta + \kappa I^{\star} - 1)
\begin{pmatrix}
-\rho^{\star} \sin(\phi^{\star}) &
\dfrac{1 - \rho^{\star 2}}{2} \cos(\phi^{\star})
\\
\dfrac{\rho^{\star 2} - 1}{2 \rho^{\star 2}} \cos(\phi^{\star}) &
-\dfrac{1 + \rho^{\star 2}}{2\rho^{\star}} \sin(\phi^{\star})
\end{pmatrix} \\
\matr{B} & \coloneq \kappa
\begin{pmatrix}
\displaystyle
\frac{1 - \rho^{\star 2}}{2} \sin(\phi^{\star})
I_\rho^\star&
\displaystyle
\frac{1 - \rho^{\star 2}}{2} \sin(\phi^{\star}) I_\phi^\star
\\[10pt]
\displaystyle
\left( 1 + \frac{1 + \rho^{\star 2}}{2\rho^{\star}} \cos(\phi^{\star}) \right)
I_\rho^\star &
\displaystyle
\left( 1 + \frac{1 + \rho^{\star 2}}{2\rho^{\star}} \cos(\phi^{\star}) \right)
I_\phi^\star\end{pmatrix} \\
& =
\kappa\left( 1 + \frac{1 + \rho^{\star 2}}{2\rho^{\star}} \cos(\phi^{\star}) \right)
\begin{pmatrix}
0  & 0 \\[10pt]
I_\rho^\star & I_\phi^\star
\end{pmatrix},
\end{aligned}
\]
and
\[
\begin{aligned}
I^\star & \coloneq
    \frac{3}{2} - 2 \rho^\star \cos \phi^\star + \frac{1}{2} \rho^{\star 2} \cos(2 \phi^\star),\\
I_\rho^\star & \coloneq
    \pd{I}{\rho}(\rho^\star, \phi^\star) =
    -2 \cos(\phi^\star) + \rho^\star \cos(2\phi^\star),\\
I_\phi^\star & \coloneq
    \pd{I}{\phi}(\rho^\star, \phi^\star) =
    2\rho^\star \sin(\phi^\star) - \rho^{\star 2}\sin(2\phi^\star).
\end{aligned}
\]

The structure of $\matr{B}$ simplifies significantly because every equilibrium point
satisfies either $\rho^\star=1$ or $\sin(\phi^\star)=0$. Then, substituting the exponential
ansatz $\vect{x}(t) = \vect{v} e^{s t}$, where $\vect{v} \in \mathbb{R}^2 \setminus \{0\}$,
into~\eqref{eq:matrix.equation} yields
\[
\bigl(s \matr{I}_2 - \matr{A} - \widehat l_\tau(s) \matr{B} \bigr) \vect{v} = 0.
\]

Hence, based on \Cref{rem:laplace.admissible}, the characteristic equation associated with \eqref{eq:matrix.equation} is
\begin{equation}\label{eq:general.characteristic}
\det(s \matr{I}_2 - \matr{A} - L(\tau s) \matr{B}) = 0.
\end{equation}

The characteristic equation obtained above will be adapted in
\Cref{sec:stability.type1,sec:stability.type2} to the two equilibrium families.
In both cases, the determinant factorizes in such a way that the delay-dependent
part of the characteristic equation is governed by a scalar factor
involving the Laplace--Stieltjes transform of the delay kernel. As the same
root-crossing arguments are used repeatedly for the type~1 and type~2 equilibrium branches,
we present them in \Cref{prop:general}, with the proof given in
Appendix~\ref{ax:prop.general.proof}. The first two statements only
use the general properties of admissible kernels stated in
\Cref{rem:laplace.admissible}, while the last statement is specialized to the
Dirac kernel and gives the explicit imaginary-axis crossings together with their
transversality signs. General background on characteristic roots of delay differential
equations and Hopf bifurcation for functional differential equations can be found
in \cite[Chapters~1--2]{HaleVerduynLunel1993} and
\cite[Chapter~3]{HassardKazarinoffWan1981}.

\begin{proposition}\label{prop:general}
Let $p \in \{1, 2, \dots\}$, $a, b \in \mathbb{R}$, with $b \neq 0$. Consider the
characteristic equation
\begin{equation}\label{eq:char.gen}
    \Delta_p(s,\tau)\coloneq s^p+a-bL(\tau s)=0,
\end{equation}
where $L$ satisfies the properties from~\Cref{rem:laplace.admissible}. Then:
\begin{enumerate}
\item[i.] \label{prop:general.itemi}
For any $\tau \geq 0$, $s = 0$ is a root of \eqref{eq:char.gen} if and only if $a = b$.
Furthermore, the root $s = 0$ is simple if and only if (a) $p = 1$ and $1 + b \tau \neq
0$ or if (b) $p \geq 2$ and $\tau > 0$.

\item[ii.] \label{prop:general.itemii}
For any $\tau \geq 0$ and $a < b$, equation \eqref{eq:char.gen} has at least one real
positive root.

\item[iii.] \label{prop:general.itemiii}
(Dirac kernel) For any $a > b$ and $L(s) = e^{-s}$, we have that

\begin{itemize}
\item
If $p = 1$, equation~\eqref{eq:char.gen} has a pair of complex conjugated roots $\pm \ii
\omega$, with $\omega>0$, if and only if $|b| > |a|$ and $\omega = \sqrt{b^2 - a^2}$.
The corresponding critical delays are
\begin{equation} \label{eq:general.critical.delays.1}
\tau_n = \frac{\arccos(a / b) + 2 n \pi}{\sqrt{b^2 - a^2}},
\qquad n \in \{0, 1, \dots\}.
\end{equation}

The roots \(\pm \ii \omega\) are simple and the following transversality condition holds:
\begin{equation} \label{eq:transversality.type1}
\Re s'(\tau_n) > 0, \qquad n \in \{0, 1, \dots\}.
\end{equation}

\item
If $p = 2$, equation~\eqref{eq:char.gen} has a pair of complex conjugated roots $\pm \ii
\omega$, with $\omega > 0$, if and only if there exists $n \ge 1$ such that
$a - (-1)^n b > 0$, in which case $\omega = \omega_n = \sqrt{a - (-1)^n b}$.
The corresponding critical delays are
\begin{equation} \label{eq:general.critical.delays.2}
\tau_n = \frac{n \pi}{\sqrt{a - (-1)^n b}},
\qquad n \in \{1, 2, \dots\}.
\end{equation}

The roots \(\pm \ii \omega\) are simple, and the following transversality condition holds:
\begin{equation} \label{eq:transversality.type2}
\sign \bigl(\Re s'(\tau_n)\bigr) = (-1)^{n + 1} \sign(b),
\qquad \forall n \in \{1, 2, \dots\}.
\end{equation}
\end{itemize}
\end{enumerate}
\end{proposition}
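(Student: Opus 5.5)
The plan is to treat each item by direct manipulation of $\Delta_p(s,\tau)=s^p+a-bL(\tau s)$, using only $L(0)=1$, $L'(0)=-1$ and $|L(\xi)|\le 1$ for $\Re\xi\ge0$ from \Cref{rem:laplace.admissible}. For item~i, evaluating at $s=0$ gives $\Delta_p(0,\tau)=a-bL(0)=a-b$, so $s=0$ is a root exactly when $a=b$. Simplicity is decided by $\partial_s\Delta_p(0,\tau)=p\,0^{p-1}-b\tau L'(0)$. For $p=1$ this equals $1+b\tau$. For $p\ge2$ it equals $b\tau$, which is nonzero iff $\tau>0$ because $b\neq0$. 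This settles (a) and (b).

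For item~ii, I restrict to real $s\ge0$. There $L(\tau s)=\int e^{-\tau s u}\,\mathrm d\mu(u)\in(0,1]$ is real, so $f(s)\coloneq\Delta_p(s,\tau)$ is a continuous real function. We have $f(0)=a-b<0$ and $f(s)\ge s^p+a-|b|\to\infty$. The intermediate value theorem then gives a positive root. For $\tau=0$ one simply has $L\equiv1$ and the root $s=(b-a)^{1/p}>0$.

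For item~iii, I substitute $s=\ii\omega$ with $\omega>0$ and split into real and imaginary parts.

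When $p=1$, the equation $\ii\omega+a=b e^{-\ii\omega\tau}$ gives $a=b\cos\omega\tau$ and $\omega=-b\sin\omega\tau$. Taking moduli yields $\omega^2=b^2-a^2$, hence the condition $|b|>|a|$. Combined with $a>b$, this forces $b<0$. Then $\sin\omega\tau=-\omega/b>0$, so $\omega\tau=\arccos(a/b)+2n\pi$, which is \eqref{eq:general.critical.delays.1}. Conversely, these values satisfy both equations.

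For simplicity and transversality I differentiate $\Delta_1(s(\tau),\tau)=0$ implicitly and use $be^{-s\tau}=s+a$. This gives
\[
s'(\tau)\bigl(1+\tau(s+a)\bigr)=-s(s+a).
\]
At $s=\ii\omega$ the factor $1+\tau(a+\ii\omega)$ has imaginary part $\tau\omega\neq0$, so the root is simple. Moreover $\sign\Re s'$ equals $\sign\Re\bigl[-s(s+a)\,\overline{(1+\tau(s+a))}\bigr]$. A short computation gives $\omega^2(1+\tau a)-\tau a\omega^2=\omega^2>0$, which is \eqref{eq:transversality.type1}.

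When $p=2$, the equation $-\omega^2+a=be^{-\ii\omega\tau}$ has a real left-hand side. Its imaginary part forces $\sin\omega\tau=0$, i.e.\ $\omega\tau=n\pi$. Then $a-\omega^2=(-1)^nb$, giving $\omega_n^2=a-(-1)^nb>0$ with $n\ge1$ since $\omega>0$. This yields \eqref{eq:general.critical.delays.2}. Implicit differentiation with $c\coloneq be^{-\ii\omega\tau}=(-1)^nb$ gives
\[
s'(2\ii\omega+\tau c)=-\ii\omega c.
\]
Here $2\ii\omega+\tau c\neq0$ because its imaginary part is $2\omega\neq0$, so the root is simple. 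Multiplying by the conjugate, the real part of $-\ii\omega c(\tau c-2\ii\omega)$ is $-2\omega^2c$. Hence $\sign\Re s'=-\sign c=(-1)^{n+1}\sign b$, which is \eqref{eq:transversality.type2}.

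The main care point is bookkeeping rather than depth. In the $p=1$ case one must check that $a>b$ together with $|b|>|a|$ forces $b<0$, since this is what makes the principal $\arccos$ branch consistent with the sign of $\sin\omega\tau$. In both cases one must also correctly use the characteristic equation to eliminate $e^{-s\tau}$ before taking real parts.
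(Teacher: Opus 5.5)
Your proposal is correct and follows the paper's proof essentially step by step. You evaluate $\Delta_p$ and $\partial_s\Delta_p$ at $s=0$, apply the intermediate value theorem on $[0,\infty)$, and for the Dirac kernel separate real and imaginary parts at $s=\ii\omega$ and differentiate implicitly in $\tau$. This yields exactly the paper's formulas $\Re s'(\tau_n)=\omega^2/\bigl((1+a\tau_n)^2+\omega^2\tau_n^2\bigr)$ and $\Re s'(\tau_n)=-2(-1)^n b\,\omega_n^2/\bigl(b^2\tau_n^2+4\omega_n^2\bigr)$.

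Your explicit check that $\partial_s\Delta_p(\ii\omega,\tau_n)\neq0$ usefully supplies the simplicity claim that the paper leaves implicit. One small correction: in the $p=2$ case, $n\ge1$ follows from $\tau>0$ rather than from $\omega>0$ alone, since $\tau=0$ gives $n=0$ and $\omega^2=a-b>0$.
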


\section{Local stability and bifurcation analysis of type 1 equilibria}
\label{sec:stability.type1}

For an arbitrary type 1 equilibrium point, the matrices from~\eqref{eq:matrix.equation} simplify to
\[
\begin{aligned}
\matr{A} & = -(\eta + \kappa I^{\star} - 1) \sin(\phi^{\star})
\begin{pmatrix}
1 & 0 \\
0 & 1
\end{pmatrix}, \\
\matr{B} & = \kappa (1 + \cos(\phi^{\star}))
\begin{pmatrix}
0 & 0 \\
-2 \cos(\phi^{\star}) + \cos(2 \phi^{\star}) &
2 \sin(\phi^{\star}) - \sin(2 \phi^{\star})
\end{pmatrix},
\end{aligned}
\]
and the characteristic equation~\eqref{eq:general.characteristic} becomes:
\begin{equation}
[
    s + (\eta + \kappa I^{\star} - 1) \sin(\phi^{\star})
][
    s + (\eta + \kappa I^{\star} - 1) \sin(\phi^{\star})
    - 2 \kappa \sin^3(\phi^{\star}) L(\tau s)
] = 0.
\end{equation}

The first factor of the characteristic equation gives the $s_1$ root directly:
\[
s_1 \equiv s_1(\phi^\star)
    \coloneq -(\eta + \kappa I^{\star} - 1) \sin(\phi^{\star})
    = 2 \tan \left(\frac{\phi^{\star}}{2}\right),
\]
where the last equality is derived from the equilibrium condition $\dot{\phi} = 0$
in~\eqref{eq:sys.2d}. The remaining roots are governed by the transcendental equation
(see also~\Cref{prop:general})
\begin{equation} \label{eq:char.type1}
\Delta_1(s,\tau)\coloneq s + a_1 - b_1 L(\tau s) = 0,
\end{equation}
where
\begin{equation}\label{eq:a1.b1}
a_1 \equiv a_1(\phi^\star) \coloneq -2 \tan \left(\frac{\phi^{\star}}{2}\right)
\qquad \text{and} \qquad
b_1 \equiv b_1(\kappa, \phi^\star) \coloneq 2 \kappa \sin^3(\phi^{\star}).
\end{equation}

\begin{remark}\label{rem:instab.type1}
Type 1 equilibria $z^\star = e^{\ii \phi^\star}$ on the upper semicircle (with
$\phi^{\star} \in [0, \pi]$) are unstable, regardless of the admissible delay kernel
$l_\tau$. Indeed, in this case, we always have $s_1 > 0$. Therefore, in what follows, we
focus on investigating the stability properties of type 1 equilibria on the lower
semicircle, i.e. with $\phi^{\star} \in [-\pi, 0]$.
\end{remark}

\subsection{Classification of type 1 equilibria in the delay-free case}

The number of type 1 equilibria is determined by two curves in the $(\kappa, \eta)$
plane: a saddle-node curve $\Gamma^{(1)}_{\mathrm{SN}}$, along which two equilibria collide and
annihilate, and the line $\eta = 0$, where an additional equilibrium point attains the
boundary value $z^\star = 1$. These two curves partition the parameter plane into
regions of constant equilibrium count, as seen in~\Cref{fig:number.type.1}. The
following proposition completely classifies the type 1 equilibria in the delay-free
case.

\begin{figure}[ht!]
\centering
\includegraphics[width=0.55\linewidth]{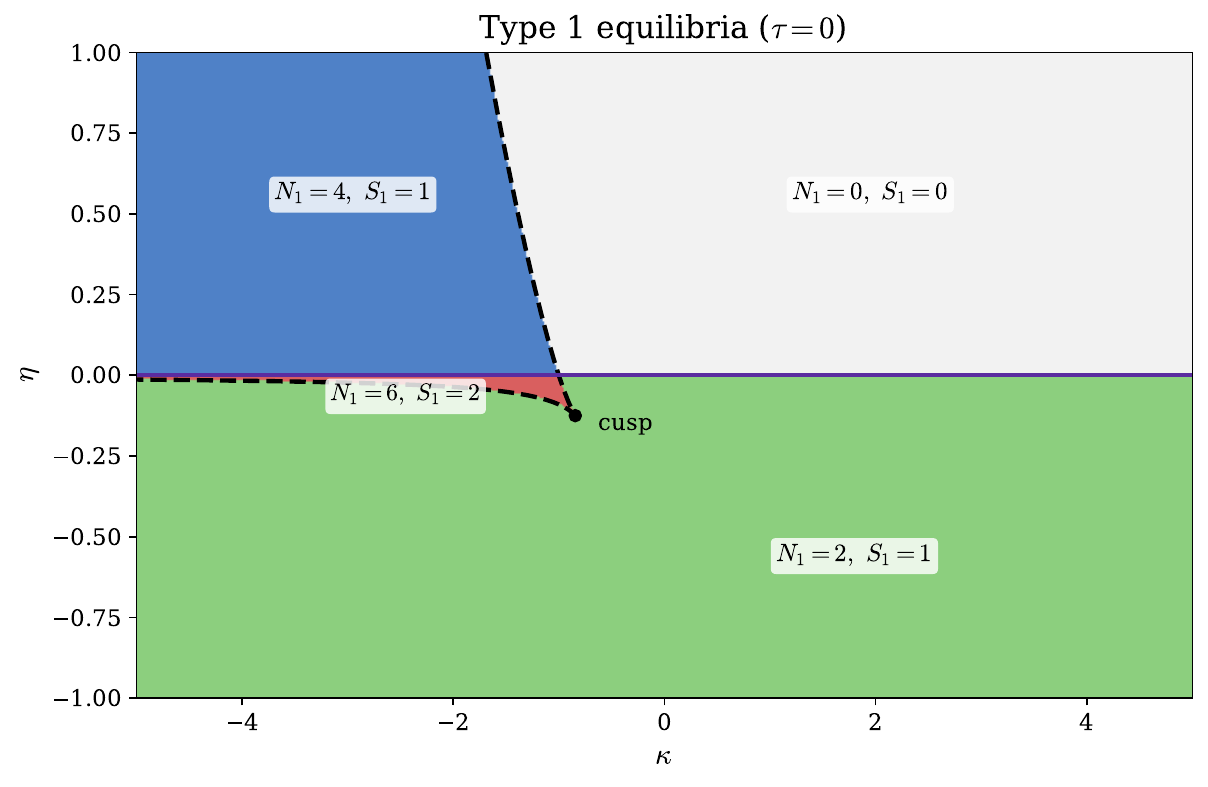}

\caption{Number of type 1 equilibria ($N_1$) and number of asymptotically stable type 1
equilibria ($S_1$) in the delay-free case (see~\Cref{prop:type1.regions}) in
the regions of the $(\kappa,\eta)$ parameter plane delimited by the saddle-node curve
$\Gamma_{\mathrm{SN}}^{(1)}$ (dashed) and the boundary line $\Sigma$ (horizontal axis).}
\label{fig:number.type.1}
\end{figure}

\begin{proposition}[Number and stability of type 1 equilibria when $\tau = 0$]
\label{prop:type1.regions}
Consider the delay-free system~\eqref{eq:sys.2d}. Let $\Gamma_{\mathrm{SN}}^{(1)}$ be the
saddle-node bifurcation curve defined parametrically by
\begin{equation}\label{eq:bif.curve.SN.type1}
\Gamma_{\mathrm{SN}}^{(1)}:
\qquad
\kappa = -\frac{1}{(1 - u)(1 + u)^2},
\qquad
\eta = -\frac{u (1 - u)}{(1 + u)^2},
\qquad u \in (-1, 1)
\end{equation}
and $\Sigma \coloneq \{(\kappa, \eta) : \eta = 0\}$ be a boundary line. For $\kappa <
-\nicefrac{27}{32}$, the saddle-node curve $\Gamma_{\mathrm{SN}}^{(1)}$ given by
\eqref{eq:bif.curve.SN.type1} has two branches $\eta_{\pm}(\kappa)$ with
\(\eta_+(\kappa) > \eta_-(\kappa)\). These branches converge at the cusp point
$(\kappa, \eta) = \left(-\nicefrac{27}{32}, -\nicefrac{1}{8}\right)$ (see~\Cref{fig:number.type.1}).

Let $N_1$ be the number of type 1 equilibria and $S_1$ the number of asymptotically
stable type 1 equilibria. Then, in each connected component of the complement of
$\Gamma_{\mathrm{SN}}^{(1)} \cup \Sigma$, we have:
\[
(N_1,S_1) =
\begin{cases}
(0,0), & \eta>0 \text{ and either } \kappa\ge -1
         \text{ or } (\kappa<-1 \text{ and } \eta>\eta_+(\kappa)),\\
(4,1), & \kappa<-1 \text{ and } 0<\eta<\eta_+(\kappa),\\
(6,2), & \kappa<-\frac{27}{32}
         \text{ and } \eta_-(\kappa)<\eta<\min\{0,\eta_+(\kappa)\},\\
(2,1), & \text{otherwise.}
\end{cases}
\]
\end{proposition}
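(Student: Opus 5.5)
The plan is to reduce everything to the cubic $P_1$ on $(-1,1)$, and then read off the delay-free stability from the sign of one explicit eigenvalue, which I will try to tie to the sign of $P_1'$ at the root.

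\textbf{Counting equilibria.} Each root $u^\star\in(-1,1)$ of $P_1$ gives exactly two type~1 equilibria $e^{\pm\ii\phi^\star}$ with $\cos\phi^\star=u^\star$. By \Cref{rem:instab.type1} the one on the upper semicircle is always unstable, so $N_1=2\#\{\text{roots in }(-1,1)\}$ and $S_1\le N_1/2$. Direct evaluation gives $P_1(-1)=2>0$ and $P_1(1)=2\eta$, so:
\begin{itemize}
\item for $\eta<0$ the number of roots in $(-1,1)$ is odd, hence $1$ or $3$;
\item for $\eta>0$ it is even, hence $0$ or $2$.
\end{itemize}
Roots can only enter or leave $(-1,1)$ through a double root, where $P_1(u)=P_1'(u)=0$, or through the endpoint $u=1$, which happens exactly on $\Sigma$. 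Since $P_1$ is affine in $(\kappa,\eta)$, solving $P_1=P_1'=0$ for $(\kappa,\eta)$ as functions of $u$ is a linear system. It should return the parametrization \eqref{eq:bif.curve.SN.type1}; the computation is simplified by the factorization $P_1(u)=-(1+u)G(u)$ with
\[
G(u)=-\frac{1-u}{1+u}-\kappa(1-u)^2-\eta,
\]
which I checked by expanding. The shape of $\Gamma_{\mathrm{SN}}^{(1)}$ then follows from analyzing the parametrization:
\begin{itemize}
\item Compute $d\kappa/du$ and $d\eta/du$ and locate their common zero. This should be the cusp $u=-1/3$, giving $(-27/32,-1/8)$.
\item Note that $\kappa\to-\infty$ as $u\to\pm1$, which produces the two branches $\eta_\pm$ for $\kappa<-27/32$.
\item Note that the branch through $u\to 0$ crosses $\eta=0$ at $\kappa=-1$.
\end{itemize}
The root counts in each region follow by continuity, using one test point per region (e.g.\ large $|\kappa|$, or $\kappa=0$, where $P_1$ is linear).

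\textbf{Stability.} For $\tau=0$ the lower-semicircle equilibrium has the eigenvalues
\[
s_1=2\tan(\phi^\star/2)<0
\qquad\text{and}\qquad
s_2=b_1-a_1=2\kappa\sin^3\phi^\star+2\tan(\phi^\star/2).
\]
Substituting $\sin\phi^\star=-\sqrt{1-u^2}$ and $\tan(\phi^\star/2)=-\sqrt{(1-u)/(1+u)}$ gives
\[
s_2=-2\sqrt{1-u^2}\left[\kappa(1-u^2)+\frac{1}{1+u}\right].
\]
The key step is to show that, at a root of $P_1$, $s_2$ equals a strictly positive factor times $P_1'(u^\star)$, i.e.\ $\sign s_2=\sign P_1'(u^\star)$. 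To do this I will use the equilibrium relation $G(u^\star)=0$ to eliminate $\eta$ or $\kappa$ from $P_1'(u^\star)=-(1+u^\star)G'(u^\star)$. Once this holds, the conclusion follows from root interlacing:
\begin{itemize}
\item Since $P_1(-1)>0$, the simple roots in $(-1,1)$ alternate $P_1'<0,\ P_1'>0,\ P_1'<0$.
\item One root ($\eta<0$) gives $(2,1)$.
\item Two roots ($\eta>0$) give $(4,1)$.
\item Three roots give $(6,2)$.
\end{itemize}

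\textbf{Main obstacle.} I expect the hardest part to be exactly this sign identity. A naive comparison shows that the bracket in $s_2$ is not literally a multiple of $G'(u)=2\kappa(1-u)-2/(1+u)^2$: the two expressions differ by a term $4/(1+u)^2$, and the relation $G(u^\star)=0$ alone does not obviously remove it. If no such identity exists, I would do the following:
\begin{itemize}
\item First re-derive the second characteristic factor carefully from \eqref{eq:sys.2d}, since a sign slip in $\matr B$ would exactly produce this discrepancy.
\item Failing that, prove stability region by region. Parametrize the roots by $u$ and express the condition $s_2<0$ as an inequality in $(u,\kappa)$. Show that $s_2$ can change sign only where $s_2=0$, which by \Cref{prop:general}(i) means $a_1=b_1$, i.e.\ a zero eigenvalue. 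Then check that this locus coincides with $\Gamma^{(1)}_{\mathrm{SN}}$, so that $S_1$ is constant in each component and can be evaluated at a single test point.
\end{itemize}
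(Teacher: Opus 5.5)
Your plan is the paper's proof. It uses the same factorization $P_1(u)=(1+u)\bigl(\eta-F_\kappa(u)\bigr)$ (your $G$ is $F_\kappa-\eta$), the same derivation of $\Gamma^{(1)}_{\mathrm{SN}}$ from a double root, the same identity $s_2=\sqrt{1-u^{\star 2}}\,P_1'(u^\star)$, and the same sign alternation of $P_1'$ at consecutive simple roots. Your sign argument from $P_1(-1)=2>0$ is actually more precise than the paper's appeal to Rolle's theorem.

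The ``main obstacle'' you anticipate does not exist. It comes from a sign slip in differentiating $G$. Since $-\frac{1-u}{1+u}=1-\frac{2}{1+u}$, one has $G'(u)=2\kappa(1-u)+\frac{2}{(1+u)^2}$, not $2\kappa(1-u)-\frac{2}{(1+u)^2}$. With the correct sign, at a root you get $P_1'(u^\star)=-(1+u^\star)G'(u^\star)=-2\bigl[\kappa(1-u^{\star 2})+\frac{1}{1+u^\star}\bigr]$. This is exactly your bracket, so $s_2=\sqrt{1-u^{\star 2}}\,P_1'(u^\star)$ follows immediately, with no further elimination and no fallback needed.

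You could have caught the slip with a built-in check. With your $G'$, solving $G=G'=0$ gives $\kappa=+1/\bigl((1-u)(1+u)^2\bigr)$, which contradicts the stated parametrization. A slicker route to the identity is that on the invariant circle $\rho=1$ the delay-free phase equation is exactly $\dot\phi=P_1(\cos\phi)$. The tangential eigenvalue is therefore $-\sin\phi^\star\,P_1'(\cos\phi^\star)$.

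One smaller correction: the cusp is at $u=1/3$, not $u=-1/3$. At $u=1/3$ both $\frac{d}{du}\bigl[(1-u)(1+u)^2\bigr]=(1+u)(1-3u)$ and $\eta'(u)=-(1-3u)/(1+u)^3$ vanish. The parameter $u=-1/3$ gives the point $(-27/16,\,1)$ instead.
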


\begin{proof}
The saddle-node bifurcation curve is determined using \Cref{prop:general}(i). First, we
have that $s = 0$ is a solution of the characteristic equation~\eqref{eq:char.type1} if and
only if $a_1(\phi^\star) = b_1(\kappa, \phi^\star)$, as defined in~\eqref{eq:a1.b1}.
Solving for $\kappa$ and replacing $u \equiv \cos \phi^\star$ gives the first parametric
equation. Second, the expression for $\eta$ is determined from the equilibrium
equation $P_1(u) = 0$ from~\eqref{eq:P1.type1}.

To determine the number of roots, we start by looking at the boundary line $\Sigma$.
From~\eqref{eq:P1.type1}, we note that $P_1(1) = 2 \eta$, so the boundary point $u = 1$
is only a root of the equilibrium equation if $\eta = 0$, i.e. $(\kappa, \eta) \in \Sigma$.
By definition, $u = 1$ corresponds to an equilibrium point $\phi^\star = 0$ and $z^\star = 1$.
Performing an expansion of $\cos \phi^\star$ for small $\phi^\star$ (i.e. around $u = 1$) gives
\[
P_1(\cos \phi^\star) = 2\eta + \frac{\phi^{\star 2}}{2} + O(\eta \phi^{\star 2}+\phi^{\star 4}).
\]

Consequently, near $\Sigma$, we have that $\phi^{\star 2} = -4 \eta + O(\eta^2)$, yielding a
pair of conjugate type 1 equilibria for $\eta < 0$ and none for $\eta > 0$. Then,
$\Gamma^{(1)}_{\mathrm{SN}}$ accounts for the remaining changes in the root count of $P_1$ in
$(-1, 1)$. To make this explicit for type 1 equilibria, we can
write~\eqref{eq:equilibrium} and \eqref{eq:P1.type1} as
\[
F_\kappa(u) = -\kappa (1 - u)^2 - \frac{1 - u}{1 + u}
\quad \text{and} \quad
P_1(u) = (1 + u) \bigl(\eta - F_\kappa(u)\bigr).
\]

Since $1 + u > 0$ on $(-1, 1)$, the roots of $P_1$ coincide with those of the second
factor. The function $F_\kappa$ is increasing for $\kappa \ge -\nicefrac{27}{32}$. For
$\kappa<-\nicefrac{27}{32}$, it has exactly one local maximum and one local minimum,
whose loci are the two branches $\eta_+(\kappa)$ and $\eta_-(\kappa)$ of
$\Gamma_{\mathrm{SN}}^{(1)}$. Hence, $\Gamma_{\mathrm{SN}}^{(1)} \cup \Sigma$ partitions the parameter
plane into regions where the number of roots of $P_1$ in $(-1,1)$ is constant, and
consequently, the values of $N_1$ given in the statement of the proposition follow.

For the stability analysis, let $u\in(-1,1)$ be a simple root of $P_1$. The
corresponding type 1 equilibria satisfy $\phi^\star_\pm=\pm\arccos u$. In the delay-free
case of~\eqref{eq:char.type1}, the characteristic roots are
\[
s_1=2\tan\frac{\phi^\star}{2}
\quad \text{and} \quad
s_2=2\tan\frac{\phi^\star}{2}+2\kappa\sin^3\phi^\star.
\]
The upper equilibrium $\phi^\star_+$ is unstable for all $u \in (-1, 1)$, since $s_1
= 2 \tan \nicefrac{\phi^\star_+}{2} > 0$. For the lower equilibrium $\phi^\star_-$, we
have
\[
s_1 < 0
\quad \text{and} \quad
s_2 = \sqrt{1-u^2}\,P_1'(u),
\]
so it becomes asymptotically stable if and only if $P_1'(u)<0$. At the three consecutive
simple roots $u_1 < u_2 < u_3$ of $P_1(u) = \eta$, the signs of $P_1'$ alternate by
Rolle's theorem. Consequently, in the six-equilibria region exactly two lower type 1
equilibria are asymptotically stable, while in the two- and four-equilibria regions
exactly one such equilibrium is asymptotically stable.
\end{proof}

\subsection{Effect of distributed delays on type 1 equilibria on the lower semicircle}

When $\tau>0$, the reduced characteristic equation~\eqref{eq:char.type1} determines the
stability properties of type 1 equilibria with $\phi^\star < 0$. The following proposition
characterizes the regions where the stability of the type 1 equilibria does not change
with the introduction of the delay.

\begin{proposition}[Delay-independent results for type 1 equilibria] \label{prop:stab.type1}
Assume that $\kappa < 0$ and $\phi^\star\in(-\pi, 0)$. Then,
\begin{enumerate}
\item[i.]
If $z^\star$ is asymptotically stable for the delay-free system, then
$z^\star$ remains asymptotically stable for every admissible delay kernel $l_\tau$, with $\tau > 0$.

\item[ii.]
If $z^\star$ is unstable for the delay-free system, then $z^\star$ remains unstable
for every admissible delay kernel $l_\tau$, with $\tau > 0$.
\end{enumerate}
\end{proposition}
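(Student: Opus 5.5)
The plan is to work entirely with the factorized characteristic equation of type~1 equilibria. I will treat the root $s_1$ and the scalar equation \eqref{eq:char.type1} separately and use the signs of $a_1$ and $b_1$ in \eqref{eq:a1.b1}.

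\textbf{Sign setup.} For $\phi^\star\in(-\pi,0)$ we have $\tan(\phi^\star/2)<0$ and $\sin\phi^\star<0$. Together with $\kappa<0$, this gives
\[
a_1=-2\tan(\phi^\star/2)>0,\qquad b_1=2\kappa\sin^3\phi^\star>0 .
\]
In particular $b_1\neq0$, so \Cref{prop:general} applies with $p=1$. The root $s_1=2\tan(\phi^\star/2)$ is negative and does not depend on $\tau$. Hence stability is decided only by the roots of $\Delta_1(s,\tau)=s+a_1-b_1L(\tau s)$.

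For $\tau=0$ the only such root is $s_2=b_1-a_1$. So delay-free asymptotic stability means $a_1>b_1$, and delay-free instability means $a_1<b_1$. The borderline case $a_1=b_1$ is the non-hyperbolic saddle-node situation of \Cref{prop:general}(i) and falls under neither hypothesis.

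\textbf{Part (ii).} Here $a_1<b_1$. \Cref{prop:general}(ii) then gives, for every $\tau\ge0$ and every admissible kernel, a real positive root of \eqref{eq:char.type1}. This root is an unstable characteristic value, so $z^\star$ is unstable.

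\textbf{Part (i).} Here $a_1>b_1>0$. The basic estimate is as follows. For $\Re s\ge0$, \Cref{rem:laplace.admissible} gives $|L(\tau s)|\le1$, so
\[
|s+a_1|\ge \Re s+a_1\ge a_1>b_1\ge |b_1L(\tau s)| .
\]
Hence \eqref{eq:char.type1} has no roots in the closed right half-plane.

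The step I expect to need care is upgrading this to a uniformly negative spectral bound. For a distributed kernel, roots could in principle accumulate at the imaginary axis. To exclude this, I will show that there is an $\varepsilon\in(0,\sigma/\tau)$ such that no root satisfies $\Re s\ge-\varepsilon$.

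For $\Re s\ge-\varepsilon$ we have
\[
|L(\tau s)|\le \int_{[0,\infty)} e^{\varepsilon\tau u}\dx[\mu(u)]=:M(\varepsilon).
\]
By dominated convergence, using the exponential moment condition with dominating function $e^{\sigma u}$, $M(\varepsilon)\to1$ as $\varepsilon\to0^+$. On the other hand, $|s+a_1|\ge a_1-\varepsilon$ in this half-plane. Since $a_1>b_1$, we can choose $\varepsilon$ small enough that $a_1-\varepsilon>b_1M(\varepsilon)$. Then $\Delta_1(s,\tau)\neq0$ for all $\Re s\ge-\varepsilon$.

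Combining this with $s_1<0$, all characteristic roots satisfy $\Re s\le-\min\{\varepsilon,|s_1|\}<0$. Standard linearized stability for functional differential equations with exponentially decaying kernels (\cite[Chapters~1--2]{HaleVerduynLunel1993}) then yields local asymptotic stability of $z^\star$ for every $\tau>0$.
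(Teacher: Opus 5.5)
Your proof is correct and follows the same route as the paper: in part (i) the estimate $|s+a_1|\ge \Re s+a_1\ge a_1>b_1\ge b_1|L(\tau s)|$ rules out roots with $\Re s\ge 0$, and in part (ii) \Cref{prop:general}(ii) gives a positive real root. Two of your additions are left implicit in the paper but are sound: the explicit observation that $s_1<0$ does not depend on $\tau$, and the $\varepsilon$-margin argument using the exponential moment, which gives a uniform spectral bound $\Re s\le-\varepsilon$ and helps justify linearized stability for infinite-memory kernels.
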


\begin{proof}
For $\kappa < 0$ and $\phi^\star < 0$, we have that the coefficients of the reduced
characteristic equation \eqref{eq:char.type1} satisfy $a_1 > 0$ and $b_1 > 0$. We also note
that, for $\tau = 0$, the reduced equation becomes $s + a_1 - b_1 = 0$. Therefore, the
equilibrium $z^\star$ is asymptotically stable if and only if $a_1 > b_1$.

Assuming by contradiction that, for some $\tau>0$, \eqref{eq:char.type1} has a root $s$ with
$\Re s \ge 0$, we have
\[
|s+a_1| = b_1 |L(\tau s)| \le b_1,
\]
by the property of admissible kernels from~\Cref{rem:laplace.admissible}. On the other
hand, we also have that
\[
|s+a_1| \ge \Re(s+a_1) = \Re s + a_1 \ge a_1.
\]

This contradicts the stability condition $a_1 > b_1$. Consequently, all characteristic
roots satisfy $\Re s < 0$, and the equilibrium is asymptotically stable for every $\tau
\ge 0$. On the other hand, instability for $\tau = 0$ implies $a_1 < b_1$. Therefore,
based on~\Cref{prop:general}(ii), which guarantees the existence of a root with positive
real part for any $\tau \geq 0$, we obtain that the equilibrium remains unstable.
\end{proof}

\Cref{rem:instab.type1} and \Cref{prop:stab.type1} show that, when $\kappa < 0$, the
stability properties of all type 1 equilibria are independent of the delay. In addition,
when $\kappa>0$ and $\eta>0$, there are no type 1 equilibria. Therefore, distributed
delays can only destabilize type 1 equilibria on the lower semicircle if and only if
$\kappa > 0$ and $\eta < 0$. In the remainder of this section, we restrict our attention
to the fourth quadrant of the $(\kappa,\eta)$-parameter plane.

In this quadrant, \Cref{prop:type1.regions} guarantees the existence of a unique type 1
equilibrium on the lower semicircle, which is asymptotically stable in the delay-free case.
We denote this unique equilibrium by $z_a^\star$ and analyze all its possible bifurcations
in the next sections.

\subsection{Hopf curves at \texorpdfstring{$z_a^\star$}{z_a} in the fourth quadrant of the \texorpdfstring{$(\kappa, \eta)$}{(kappa, eta)}-plane}
\label{sec:type1.hopf}

To analyze the Hopf curves in the fourth quadrant, we consider an admissible delay
kernel $l_\tau$ (in the sense of~\Cref{def:admissible.kernels}), such that the polar
representation  $L(\ii \nu) = r(\nu)e^{-\ii a(\nu)}$ from~\Cref{def:phase.regular.kernels}
holds. Then, we look for purely imaginary solutions of the characteristic equation of
the form $s = \ii \frac{\nu}{\tau}$ with $\nu > 0$. Therefore, \eqref{eq:char.type1}
becomes
\[
\ii \frac{\nu}{\tau}
- 2 \tan \left(\frac{\phi^{\star}}{2}\right)
- 2 \kappa \sin^3(\phi^{\star}) r(\nu) e^{-\ii a(\nu)} = 0.
\]

By separating the real and imaginary parts, we have that
\begin{equation} \label{eq:real.imag.parts}
\begin{cases}
\kappa \sin^3(\phi^{\star}) r(\nu) \cos(a(\nu))
    = -\tan \left(\dfrac{\phi^{\star}}{2}\right), \\
\kappa \sin^3(\phi^{\star}) r(\nu) \sin(a(\nu))
    = -\dfrac{\nu}{2\tau}.
\end{cases}
\end{equation}

Assuming that $\cos(a(\nu)) \ne 0$, we divide the two equations
in~\eqref{eq:real.imag.parts} and find that
\begin{equation} \label{eq:omega}
\frac{\tan(a(\nu))}{\nu}
= \frac{1}{2\tau} \cot \left(\frac{\phi^{\star}}{2}\right)
\implies
\phi^{\star} = 2 \arctan \left(\frac{\nu}{2\tau} \cot (a(\nu))\right).
\end{equation}

For $\phi^{\star} \in (-\pi, 0)$ we have that $\tan(\phi^\star / 2) < 0$. Since
$\nu > 0$ and $\tau > 0$, \eqref{eq:omega} implies that $\cot (\phi^\star / 2) < 0$,
which restricts $a(\nu) \in (\frac{\pi}{2} + n \pi, \pi + n \pi)$, for $n \in \{0, 1,
\dots\}$. The continuous function $a$ is strictly increasing and positive for all
admissible kernels (see~\Cref{def:phase.regular.kernels}), therefore it is invertible. Its
inverse gives the following intervals
\[
I_n \coloneq a^{-1} \left[\left(\frac{\pi}{2} + n \pi, \pi + n \pi\right)\right].
\]

Using \eqref{eq:real.imag.parts} and the equilibrium equation~\eqref{eq:P1.type1}, we obtain
that the corresponding curves of purely imaginary roots can be represented parametrically by:
\begin{equation}\label{eq:kappa.eta.system}
\Gamma^{(1)}_{\mathrm{H},n}(l_\tau): \quad
\begin{cases}
\kappa =
    -\dfrac{(4 \tau^2 \tan^2(a(\nu))+\nu^2)^3}
           {128 \tau^4 \nu^2 \tan^3(a(\nu)) r(\nu) \sin(a(\nu))}, \\
\eta =
    \dfrac{\nu^2}{4 \tau^2 \tan^2 (a(\nu))} \left(
        \dfrac{4 \tau^2 \tan^2(a(\nu)) + \nu^2}
              {8 \tau^2 \tan(a(\nu)) r(\nu) \sin(a(\nu))}
        - 1
    \right),
\end{cases} \qquad \nu\in I_n.
\end{equation}

Next, we specialize to some common delay kernels used in the literature.

\begin{example}[Weak Gamma kernel] \label{ex:weak.gamma.type1}
For the kernel \(l_\tau(t)=\tau^{-1} \exp(-\frac{t}{\tau})\), we have that
\[
\begin{cases}
r(\nu) = \dfrac{1}{\sqrt{1+\nu^2}}, \\
a(\nu) = \arctan(\nu).
\end{cases}
\]

Here, $a(\nu) \in \left(0, \frac{\pi}{2}\right)$, for all $\nu$. It follows that $I_n =
\emptyset$, for all $n \in \{0, 1, \dots\}$. Hence, \eqref{eq:char.type1} has no purely
imaginary roots of the form $s = \ii \frac{\nu}{\tau}$ and no Hopf bifurcation occurs.
This is consistent with the findings from \cite{laing2018dynamics}.
\end{example}

\begin{example}[Strong Gamma kernel] \label{ex:strong.gamma.type1}
For the kernel \(l_\tau(t)=4 \tau^{-2} t \exp(-\frac{2 t} {\tau})\), we have that
\begin{equation*}
\begin{cases}
r(\nu) = \dfrac{4}{4 + \nu^2}, \\
a(\nu) = 2 \arctan \left(\dfrac{\nu}{2}\right).
\end{cases}
\end{equation*}

We find that $I_0 = (2, \infty)$, since $a(\nu) = \nicefrac{\pi}{2}$ when $\nu = 2$ and
$a$ is increasing, and $I_n = \emptyset$ for all $n \geq 1$. Replacing $r(\nu)$ and
$a(\nu)$ in system \eqref{eq:kappa.eta.system} yields the parametric equation of the
unique candidate curve:
\begin{equation} \label{eq:kappa.eta.strong.gamma}
\Gamma_{\mathrm{H}, 0}^{(1)}: \quad
\begin{cases}
\kappa =
    -\dfrac{(4 + \nu^2)^2 (64 \tau^2 + (4 - \nu^2)^2)^3}{2^{17} \tau^4 (4 - \nu^2)^3}, \\
\eta =
    \left(\dfrac{4 - \nu^2}{8 \tau}\right)^2
    \left(\dfrac{(4 + \nu^2)^2 (64 \tau^2 + (4 - \nu^2)^2)}{2^9 \tau^2 (4 - \nu^2)} - 1\right)
\end{cases} \qquad \nu>2.
\end{equation}

A numerical minimization of $\kappa(\nu,\tau)$ over the domain $\nu > 2$ and $\tau > 0$
gives the approximate lower bound $\kappa \approx 6.75$. Hence, the curve $(\Gamma_{\mathrm{H},
0}^{(1)})$ lies outside the parameter window $(\kappa, \eta) \in [-5,5] \times [-1,1]$
considered in the numerical section below, which matches the one used in \cite{laing2018dynamics}.
\end{example}

\begin{example}[Dirac kernel] \label{ex:dirac.type1}
For the Dirac kernel \(l_\tau(t)=\delta(t-\tau)\), we have that
\begin{equation*}
\begin{cases}
r(\nu) = 1, \\
a(\nu) = \nu.
\end{cases}
\end{equation*}

In this case, $I_n = \left(\frac{\pi}{2} + n \pi, \pi + n \pi\right)$, for $n \in \{0, 1, \dots\}$.
By replacing $r(\nu)$ and $a(\nu)$ in system \eqref{eq:kappa.eta.system}, we
find an infinite number of candidate Hopf bifurcation curves, given parametrically by:
\begin{equation}\label{eq:kappa.eta.dirac}
\Gamma_{\mathrm{H},n}^{(1)}: \quad
\begin{cases}
\kappa =
    -\dfrac{(4 \tau^2 + \nu^2 \cot^2(\nu))^3}{128 \tau^4 \nu^2 \cot^3 (\nu) \sin(\nu)}, \\
\eta =
    \dfrac{\nu^2 \cot^2(\nu)}{4 \tau^2}
    \left(\dfrac{4 \tau^2 + \nu^2 \cot^2(\nu)}{8 \tau^2 \cos(\nu)} - 1\right),
\end{cases} \qquad \nu\in I_n.
\end{equation}
\end{example}

\subsection{Dirac case: Hopf bifurcations at \texorpdfstring{$z_a^\star$}{za} with respect to \texorpdfstring{$\tau$}{tau}}

In the case of a discrete time delay (Dirac kernel), the following result provides a
direct method to calculate the critical values of $\tau$ and determine the criticality
of the resulting Hopf bifurcations (see~\Cref{fig:critical.delay.type1}).

\begin{figure}[ht!]
\centering
\begin{subfigure}{0.49\textwidth}
\includegraphics[width=\linewidth]{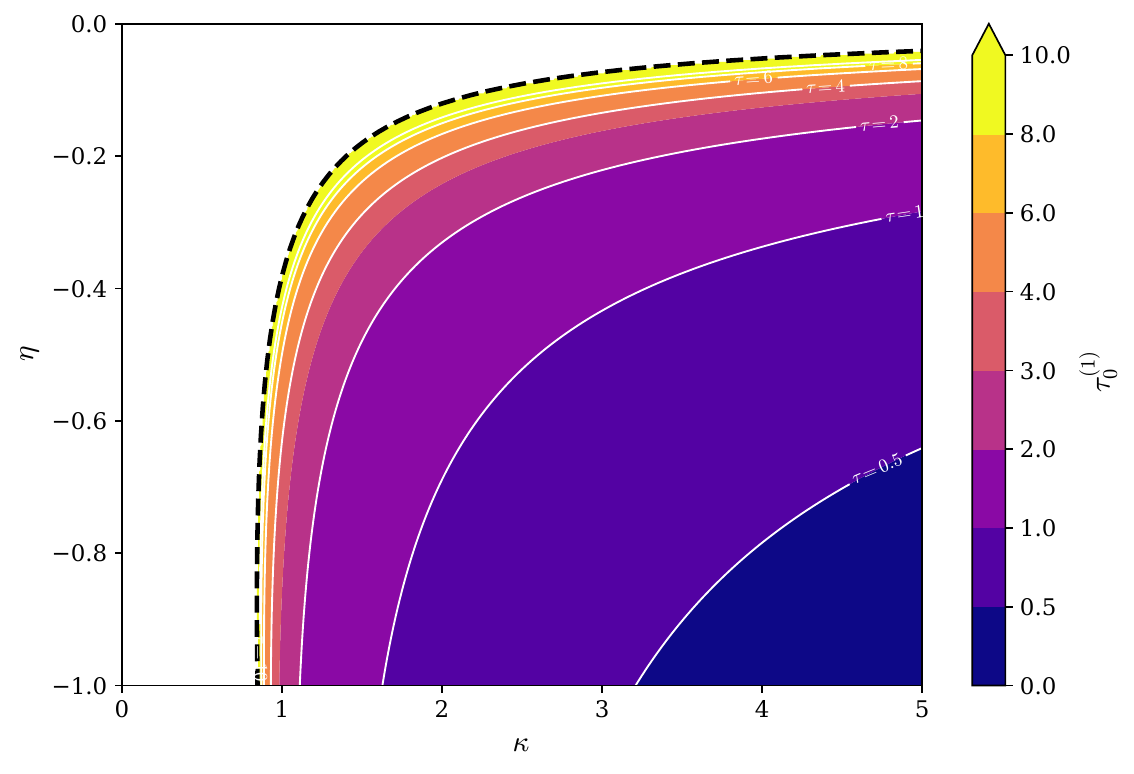}

\caption{\tiny First critical delay $\tau_0^{(1)}$ for the type 1 Hopf.}
\end{subfigure}
\begin{subfigure}{0.49\textwidth}
\includegraphics[width=\linewidth]{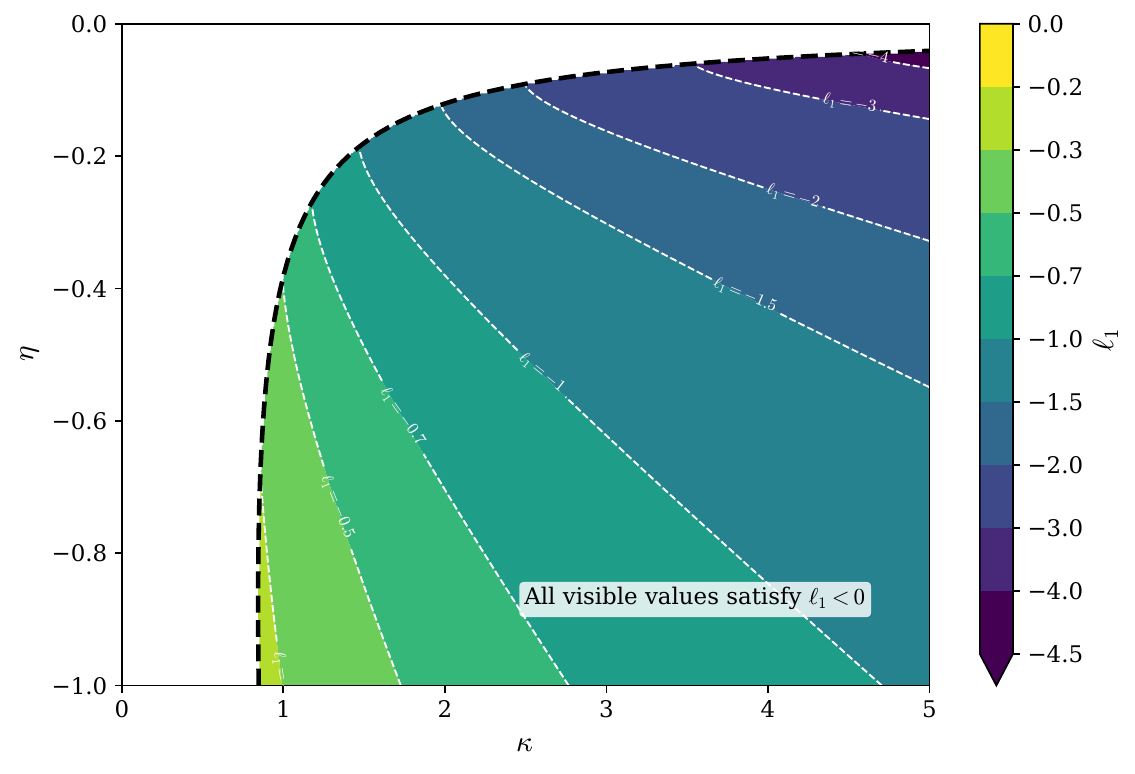}

\caption{\tiny First Lyapunov coefficient $\ell_1$ at the first critical delay $\tau_0^{(1)}$.}
\end{subfigure}

\caption{(left) First critical delay $\tau_0^{(1)}$ in the discrete-delay case and (right) the
corresponding first Lyapunov coefficient $\ell_1$ evaluated at $\tau = \tau_0^{(1)}$ for
the Hopf bifurcation of the unique type 1 equilibrium $z_a^\star$ on the lower
semicircle, in the parameter region $\kappa>0$, $\eta<0$. The dashed curve
$\Gamma_H^{(1)}$ delimits the subset of the $(\kappa,\eta)$-plane where this equilibrium
loses stability through a Hopf bifurcation. In the right panel, negative values of $\ell_1$
indicate a supercritical Hopf bifurcation.}
\label{fig:critical.delay.type1}
\end{figure}

\begin{proposition}[Dirac kernel: Hopf bifurcation of the stable type 1 equilibrium]
\label{prop:type1.dirac.hopf}
Let $l_\tau(t)=\delta(t-\tau)$, $\kappa > 0$ and $\eta < 0$.
By~\Cref{prop:type1.regions}, the delay-free system has a unique asymptotically stable
type 1 equilibrium \(z_a^\star = e^{\ii \phi^\star}\), with \(\phi^\star\in(-\pi,0)\).
A Hopf bifurcation may take place in a neighborhood of the type 1 equilibrium
$z_a^\star$ if and only if  $(\kappa,\eta)$ belongs to the open set bounded by the
curve
\[
\Gamma^{(1)}_{\mathrm{H}}:\qquad \kappa=\frac{(1+u)^3}{8u}, \qquad \eta=-\frac{u(u+3)}{2},\qquad u>0,
\]
which does not contain the origin (see \Cref{fig:critical.delay.type1}). In this case,
the critical delays for the Hopf bifurcation are
\begin{equation}\label{eq:type1.dirac.tau}
\tau_n^{(1)} = \frac{\arccos(\frac{a_1}{b_1}) + 2 n \pi}{\sqrt{b_1^2 - a_1^2}},
\qquad n\in \{0, 1, \dots\},
\end{equation}
where $a_1=a_1(\phi^\star)>0$ and $b_1=b_1(\kappa,\phi^\star)<0$ are given by
\eqref{eq:a1.b1} and satisfy $a_1 < -b_1$. The equilibrium $z_a^\star$ is asymptotically
stable if and only if $\tau \in [0, \tau_0^{(1)})$ and no stability switching occurs for $\tau
> \tau_0^{(1)}$.
\end{proposition}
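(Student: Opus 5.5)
The plan is to reduce everything to \Cref{prop:general}(iii) with $p=1$, applied to the scalar factor \eqref{eq:char.type1}. The other characteristic root $s_1=2\tan(\phi^\star/2)$ is negative and independent of $\tau$, so it plays no role.

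\textbf{Signs and the no-Hopf case.} For $\kappa>0$ and $\phi^\star\in(-\pi,0)$ we have $a_1=-2\tan(\phi^\star/2)>0$ and $b_1=2\kappa\sin^3\phi^\star<0$. Hence $a_1>b_1$ automatically. By \Cref{prop:general}(i), $s=0$ is never a root for any $\tau\ge0$, and at $\tau=0$ the single root $s=b_1-a_1$ is negative.
\begin{itemize}
\item If $a_1\ge -b_1=|b_1|$, the estimate from the proof of \Cref{prop:stab.type1} applies. A root with $\Re s\ge0$ would satisfy $a_1\le|s+a_1|=|b_1||e^{-\tau s}|\le|b_1|$. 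Equality would force $\Re s=0$, $s=0$ and $a_1=|b_1|$, but $s=0$ is excluded. So there are no imaginary-axis crossings, stability holds for all $\tau$, and no Hopf bifurcation occurs.
\item If $|b_1|>a_1$, \Cref{prop:general}(iii) gives the purely imaginary roots $\pm\ii\sqrt{b_1^2-a_1^2}$ exactly at the delays \eqref{eq:type1.dirac.tau}.
\end{itemize}
Therefore a Hopf bifurcation is possible if and only if $a_1<-b_1$.

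\textbf{Describing the set $\{a_1<-b_1\}$ in the $(\kappa,\eta)$-plane.} I would parametrize by $u\coloneq\tan^2(\phi^\star/2)>0$. Put $t=\tan(\phi^\star/2)$, so $\sin\phi^\star=2t/(1+t^2)$ and $\cos\phi^\star=(1-u)/(1+u)$. The boundary $a_1=-b_1$ reads $\kappa\sin^3\phi^\star=\tan(\phi^\star/2)$, which gives
\[
\kappa=\frac{(1+u)^3}{8u}.
\]
Substituting into the equilibrium relation \eqref{eq:equilibrium} gives $\eta$. Here $((1-z)/(1+z))^2=-u$, and $I=(1-\cos\phi^\star)^2=4u^2/(1+u)^2$. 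Hence
\[
\eta=-u-\kappa\,\frac{4u^2}{(1+u)^2}=-\frac{u(u+3)}{2},
\]
which recovers $\Gamma^{(1)}_{\mathrm H}$.

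To identify the side of the curve, I would use that, at fixed $u$ (fixed equilibrium), the condition $a_1<-b_1$ is equivalent to $\kappa>(1+u)^3/(8u)$. I would then argue that the map $(\kappa,\eta)\mapsto u$, given by the unique lower type 1 equilibrium of \Cref{prop:type1.regions}, is smooth and that $u$ changes monotonically along this family. Consequently the inequality defines the open region on the large-$\kappa$ side of $\Gamma^{(1)}_{\mathrm H}$. This region excludes the origin, because $\kappa\to0^+$ violates the inequality for every $u$: the minimum of $(1+u)^3/(8u)$ over $u>0$ is $27/32>0$.

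\textbf{Stability in $\tau$.} Roots of the retarded equation depend continuously on $\tau$, and at $\tau=0$ all roots lie in the open left half-plane. Since $s=0$ is never a root, the number of roots in $\Re s>0$ can change only at the critical delays $\tau_n^{(1)}$. By \eqref{eq:transversality.type1}, each crossing is simple and has $\Re s'(\tau_n)>0$, so it strictly increases the number of unstable roots by two. Consequently $z_a^\star$ is asymptotically stable exactly for $\tau\in[0,\tau_0^{(1)})$. Because no crossing is ever in the stabilizing direction, no stability switching occurs for $\tau>\tau_0^{(1)}$.

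\textbf{Main obstacle.} I expect the hardest step to be the geometric claim that $\{a_1<-b_1\}$ is precisely the open set bounded by $\Gamma^{(1)}_{\mathrm H}$. This requires monotonicity of the implicitly defined map $(\kappa,\eta)\mapsto u$ and a check that $\Gamma^{(1)}_{\mathrm H}$ is a simple curve in the fourth quadrant. I would first try to establish monotonicity by implicit differentiation of $P_1$, using $P_1'(u)<0$ at the stable root.
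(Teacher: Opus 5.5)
Your proposal is correct and follows essentially the same route as the paper: reduce the scalar factor \eqref{eq:char.type1} to \Cref{prop:general}(iii) with $p=1$, use $u=\tan^2(\phi^\star/2)$ to turn $a_1=-b_1$ together with the equilibrium relation into $\Gamma^{(1)}_{\mathrm H}$, and conclude from the uniformly positive transversality \eqref{eq:transversality.type1} that the unstable-root count is nondecreasing in $\tau$. Your additions are sound, namely the modulus estimate ruling out crossings when $a_1\ge -b_1$ and the sidedness argument that the paper merely asserts; the obstacle you flag is in fact easy, because at fixed $\kappa>0$ the relation $\eta=-u-4\kappa u^2/(1+u)^2$ is strictly decreasing in $u$ from $0$ to $-\infty$, so $(\kappa,u)\mapsto(\kappa,\eta)$ is a diffeomorphism of $(0,\infty)^2$ onto the fourth quadrant that carries $\{\kappa>(1+u)^3/(8u)\}$ onto the Hopf region and the graph $\kappa=(1+u)^3/(8u)$ onto $\Gamma^{(1)}_{\mathrm H}$.
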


\begin{proof}
The Hopf bifurcation results follow from \Cref{prop:general}(iii). We note that in the
case of a Dirac kernel, the characteristic equation \eqref{eq:char.type1} may have pure
imaginary roots for critical values of $\tau$ if and only if $a_1<-b_1$.

Denoting \(u=\tan^2\left(\frac{\phi^\star}{2}\right)\) in~\eqref{eq:a1.b1} leads to
\[
a_1=2\sqrt{u},\qquad
b_1
=
-\frac{16\kappa u\sqrt{u}}{(1+u)^3}.
\]
Therefore, the boundary case \(a_1 = -b_1\) together with the equilibrium equation
\eqref{eq:P1.type1} leads to the parametric equations of the curve $\Gamma_H^{(1)}$,
which represents the boundary of the Hopf region. The inequality \(a_1<-b_1\) is
equivalent to \((\kappa,\eta)\) belonging to the component of the complement of
\(\Gamma_H^{(1)}\) that does not contain the origin.

By the transversality condition~\eqref{eq:transversality.type1}, as all characteristic
roots have negative real part at \(\tau=0\), and roots can leave the open left
half-plane only through the imaginary axis, it follows that \(z_a^\star\) is
asymptotically stable for all \(\tau\in[0,\tau_0^{(1)})\). At \(\tau=\tau_0^{(1)}\) the
first conjugate pair crosses into the right half-plane, so stability is lost via a Hopf
bifurcation. Because every subsequent crossing at \(\tau=\tau_n^{(1)}\) has the same
sign \(\Re\,s'(\tau_n^{(1)})>0\), it follows that the number of characteristic roots in
the open right half-plane is nondecreasing as $\tau$ increases. Therefore, the
equilibrium never regains stability for \(\tau>\tau_0^{(1)}\), and no stability
switching occurs.
\end{proof}

Consequently, when a discrete time delay is considered and $(\kappa,\eta)$ are in the
region bounded by the curve $\Gamma^{(1)}_H$, namely the component not containing the
origin, at the first critical value $\tau=\tau_0^{(1)}$, a Hopf bifurcation takes place in a
neighborhood of $z_a^\star$, resulting in the appearance of a limit cycle. Numerical
simulations (see~\Cref{fig:critical.delay.type1}, right) show that the first Lyapunov
coefficient $\ell_1$ remains negative for all fourth quadrant $(\kappa,\eta)$ values
in the Hopf region. This confirms that the Hopf bifurcation is supercritical and leads
to the appearance of a stable limit cycle.

\section{Local stability and bifurcation analysis of type 2 equilibria}
\label{sec:stability.type2}

For an arbitrary type 2 equilibrium, the matrices \(\matr{A}\) and \(\matr{B}\)
from~\eqref{eq:matrix.equation} simplify to
\[
\matr{A} =
(\eta+\kappa I^\star-1)\,
\frac{z^{\star 2}-1}{2z^\star|z^\star|}
\begin{pmatrix}
0 & -z^{\star 2} \\
1 & 0
\end{pmatrix},
\qquad
\matr{B} =
\kappa\,\frac{(1+z^\star)^2}{2|z^\star|}(z^\star-2)
\begin{pmatrix}
0 & 0\\
1 & 0
\end{pmatrix},
\]
where $I^\star = \frac{3}{2}-2z^\star+\frac{1}{2}z^{\star 2}$. Hence, the characteristic
equation \eqref{eq:general.characteristic} becomes
\begin{equation}\label{eq:type2.char}
\Delta_2(s,\tau)\coloneq s^2
+
a_2-b_2L(\tau s)
=
0,
\end{equation}
where
\begin{equation}
\label{eq:a2.b2}
a_2 = a_2(z^\star) = 4\left(\frac{1-z^\star}{1+z^\star}\right)^2,
\qquad
b_2 = b_2(\kappa,z^\star) = \kappa(1-z^{\star 2})(2-z^\star).
\end{equation}

\begin{remark}
This characteristic equation remains valid at \(z^\star=0\) (by Cartesian
linearization), even though the matrices $\matr{A}$ and $\matr{B}$ become singular in this
formulation.
\end{remark}

\subsection{Classification of type 2 equilibria in the delay-free case}

As in the case of type 1 equilibria (see~\Cref{prop:type1.regions}), the number of type
2 equilibria is determined by the position relative to two curves in the $(\kappa,
\eta)$ plane: a saddle-node curve $\Gamma^{(2)}_{\mathrm{SN}}$ and the line $\eta = 0$. These two
curves partition the parameter plane into regions of constant equilibrium count, as seen
in~\Cref{fig:type2.no.equilibria}. The following proposition characterizes the class of
type 2 equilibria in the delay-free case. Note that, unlike for the type 1 case, there
are no asymptotically stable type 2 equilibria.

\begin{figure}[h]
\centering
\includegraphics[width=0.55\linewidth]{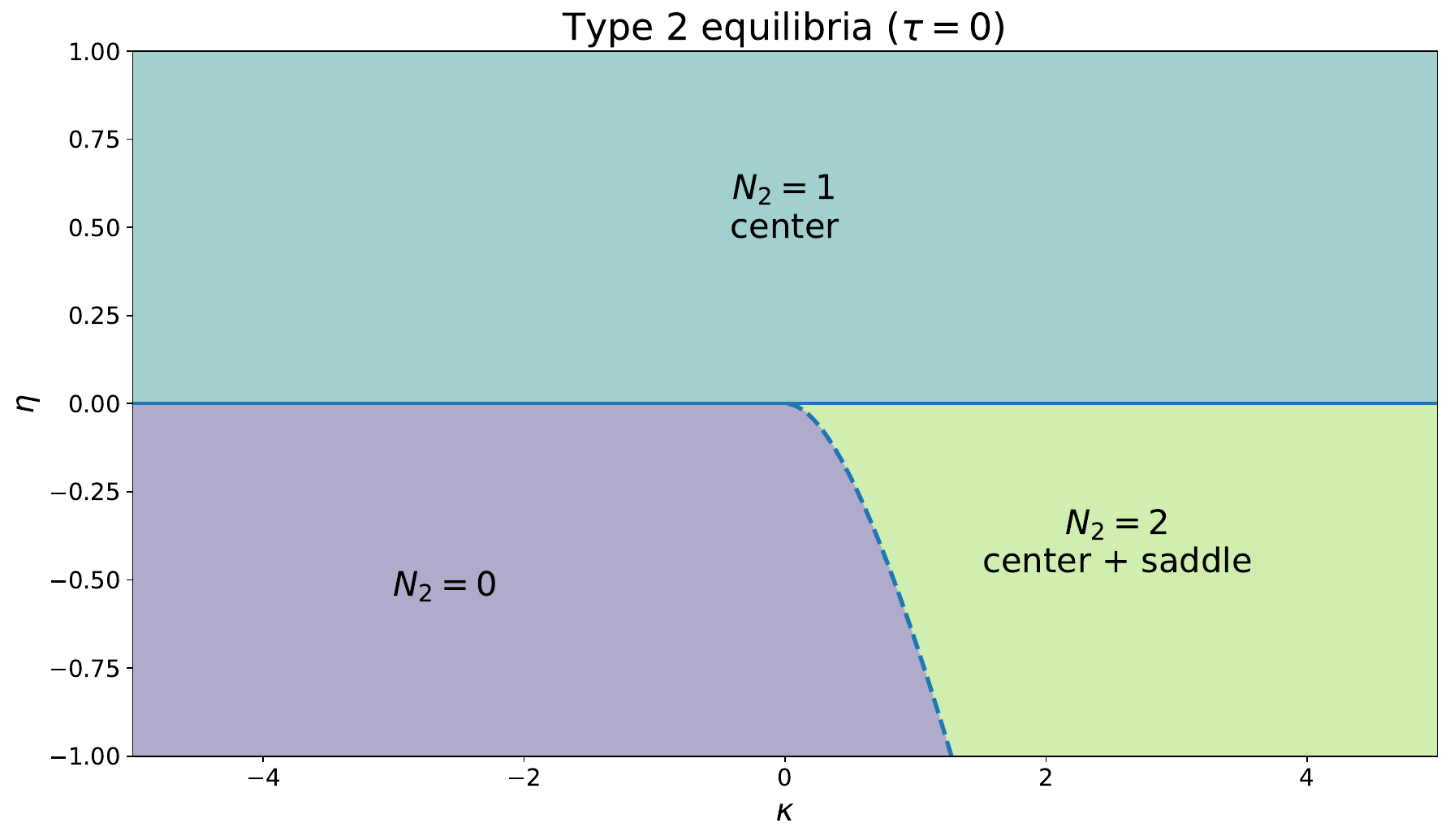}

\caption{Number ($N_2$) and classification of type 2 equilibria in the delay-free case
(see~\Cref{prop:type2.tau0}) in the regions of the $(\kappa,\eta)$ parameter plane
delimited by the saddle-node curve $\Gamma_{\mathrm{SN}}^{(2)}$ (dashed) and the boundary
line $\eta = 0$.}

\label{fig:type2.no.equilibria}
\end{figure}

\begin{proposition}[Number and classification of type 2 equilibria when \(\tau=0\)]
\label{prop:type2.tau0}
Consider the delay-free system~\eqref{eq:sys.2d}. Let $\Sigma \coloneq \{(\kappa, \eta):
\eta = 0\}$ and $\Gamma^{(2)}_{\mathrm{SN}}$ be the saddle-node bifurcation curve of the type 2
family of equilibria, defined parametrically by
\begin{equation}\label{eq:type2.SN.curve}
\Gamma_{\mathrm{SN}}^{(2)}: \qquad
\left\{
\begin{aligned}
\kappa & =\dfrac{4(1 - u)}{(1 + u)^3(2 - u)}, \\
\eta & =-\dfrac{(1-u)^2(u^2-3u+4)}{(1+u)^3(2-u)},
\end{aligned}
\right.
\qquad u\in(-1,1),
\end{equation}
which represents the graph of a smooth function \(\eta=\eta_{\mathrm{SN}}^{(2)}(\kappa) < 0\),
for all \(\kappa > 0\). Then, in each connected component of the complement of
$\Gamma_{\mathrm{SN}}^{(2)} \cup \Sigma$ (see~\Cref{fig:type2.no.equilibria}), we have:
\[
N_2=
\begin{cases}
0, & \eta<0 \text{ and either } \kappa\le0
     \text{ or } (\kappa>0 \text{ and } \eta<\eta_{\mathrm{SN}}^{(2)}(\kappa)),\\[1mm]
2, & \kappa>0 \text{ and } \eta_{\mathrm{SN}}^{(2)}(\kappa)<\eta<0,\\[1mm]
1, & \eta>0.
\end{cases}
\]

More precisely, in the $N_2 = 1$ region the unique type 2 equilibrium is a center and in the
$N_2 = 2$ region the equilibria are a center and a saddle. For the $N_2 = 2$ case, the
center corresponds to the smaller-valued equilibrium $z^\star \in (-1, 1)$, while the saddle
corresponds to the larger $z^\star$.
\end{proposition}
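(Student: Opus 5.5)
Following the proof of \Cref{prop:type1.regions}, I would rewrite the equilibrium condition~\eqref{eq:equilibrium} restricted to real $z^\star=u\in(-1,1)$ as $\eta=G_\kappa(u)$, where
\[
G_\kappa(u)\coloneq F_\kappa(u)=\left(\frac{1-u}{1+u}\right)^2-\frac{\kappa}{2}(1-u)^2(3-u).
\]
Here I use $I(u)=\tfrac32-2u+\tfrac12u^2=\tfrac12(1-u)(3-u)$. Clearing the denominator should reproduce $P_2(u)$ up to a factor involving $(1+u)^2$, which is positive on $(-1,1)$. Hence type 2 equilibria correspond exactly to intersections of the horizontal line $\eta$ with the graph of $G_\kappa$ on $(-1,1)$. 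The endpoint behaviour is immediate: $G_\kappa(u)\to+\infty$ as $u\to-1^+$ and $G_\kappa(1)=0$. This already explains the role of $\Sigma$: as $(\kappa,\eta)$ crosses $\eta=0$, one root enters or leaves through $u=1$.

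Next I would compute
\[
G_\kappa'(u)=-\frac{4(1-u)}{(1+u)^3}+\frac{\kappa}{2}(1-u)(7-3u).
\]
I would double-check this derivative against the formula for $\kappa$ in~\eqref{eq:type2.SN.curve}. The alternative characterization is via \Cref{prop:general}(i) with $p=2$: the saddle-node condition $a_2=b_2$ from~\eqref{eq:a2.b2} gives $\kappa=4(1-u)/((1+u)^3(2-u))$ directly, and substituting into $\eta=G_\kappa(u)$ gives the stated $\eta$. Since $G_\kappa'(u)=(1-u)\cdot(\text{stuff})$, I expect $a_2-b_2$ to be a positive multiple of $-G_\kappa'(u)$; more precisely $G_\kappa'(u)\propto b_2-a_2$ with a positive factor on $(-1,1)$.

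For $\kappa\le 0$, the map $G_\kappa$ is strictly decreasing from $+\infty$ to $0$. This gives exactly one root when $\eta>0$ and none when $\eta<0$. For $\kappa>0$, the critical-point equation $\kappa=4(1-u)/((1+u)^3(2-u))\eqqcolon K(u)$ needs the monotonicity of $K$ on $(-1,1)$: $K$ decreases from $+\infty$ to $0$. Given this, each $\kappa>0$ yields a unique critical point $u_c(\kappa)$, which is a minimum of $G_\kappa$. Then $G_\kappa$ decreases from $+\infty$ to $\eta_{\mathrm{SN}}^{(2)}(\kappa)=G_\kappa(u_c)<0$ and increases to $0$. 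This gives the counts $1$ for $\eta>0$, $2$ for $\eta_{\mathrm{SN}}^{(2)}<\eta<0$, and $0$ below. Monotonicity of $K$ also shows that the curve is the graph of a smooth function of $\kappa$ (inverse function theorem), and negativity of $\eta$ is visible since $u^2-3u+4>0$. Verifying $K'<0$ means checking that $\frac{d}{du}\log K=-\frac{1}{1-u}-\frac{3}{1+u}+\frac{1}{2-u}<0$. This holds because $\frac{1}{2-u}<\frac{1}{1-u}$. That step is routine.

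For the classification with $\tau=0$, \eqref{eq:type2.char} reads $s^2+a_2-b_2=0$. If $a_2>b_2$ the roots are purely imaginary, giving a linear center; if $a_2<b_2$ the roots are real and of opposite sign, giving a saddle. By the sign relation $\operatorname{sign}(a_2-b_2)=-\operatorname{sign}G_\kappa'(u)$, the equilibrium on the decreasing branch of $G_\kappa$ (the smaller root) is a center and the one on the increasing branch is a saddle. In the $N_2=1$ regime, the unique root always lies on the decreasing branch: this is clear for $\kappa\le0$, and for $\kappa>0$ it holds because the increasing branch takes values in $(\eta_{\mathrm{SN}},0)$. I expect the main obstacle to be justifying that a linear center is a genuine nonlinear center. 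I would handle this by invoking reversibility of the delay-free system~\eqref{eq:sys.2d} under $\phi\mapsto-\phi$, $t\mapsto-t$, since $I$ is even in $\phi$. The equilibrium lies on the symmetry axis, so the linear center persists as a nonlinear center, in line with the earlier analysis in~\cite{Birdac2022}.
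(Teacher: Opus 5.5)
Your argument is essentially the paper's proof. You study $F_\kappa$ on $(-1,1)$ through its endpoint limits, identify its critical points with $\kappa=K(u)$ for a strictly decreasing $K$, count intersections with horizontal lines, and read off center versus saddle from $s^2-(1-u^2)F_\kappa'(u)=0$. Your explicit check of $K'<0$, and your observation that for $\eta>0$ the unique root must lie on the decreasing branch, supply details the paper leaves implicit.

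There is one computational slip to fix. You note yourself that $\kappa I(u)=\frac{\kappa}{2}(1-u)(3-u)$, so the correct function is $G_\kappa(u)=\left(\frac{1-u}{1+u}\right)^2-\frac{\kappa}{2}(1-u)(3-u)$. Your version carries an extra factor $(1-u)$, and this has two consequences. First, clearing denominators would give a degree-five polynomial rather than $P_2$. Second, your displayed derivative would give the wrong critical curve $\kappa=8/\bigl((1+u)^3(7-3u)\bigr)$. With the correct function,
\[
G_\kappa'(u)=\kappa(2-u)-\frac{4(1-u)}{(1+u)^3}=(2-u)\bigl(\kappa-K(u)\bigr),
\qquad
b_2-a_2=(1-u^2)\,G_\kappa'(u).
\]
Both relations you only ``expected'' now become exact identities. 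The first one also shows immediately that, for $\kappa>0$, the unique critical point is a minimum.

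Your reversibility step goes beyond the paper, which calls $z_c^\star$ a center on the basis of the linearization alone. The argument is valid, but phrase it for the complex form \eqref{eq:complex} with $\tau=0$. Use the involution $z\mapsto\bar z$, $t\mapsto -t$, which is a symmetry because $I$ is real and $I(\bar z)=I(z)$. In this form the argument also covers $z^\star=0$, where the polar coordinates of \eqref{eq:sys.2d} are singular.
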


\begin{proof}
From \eqref{eq:equilibrium}, type 2 equilibria are the roots of \(F_\kappa(z)=\eta\) with
\(z \in (-1,1)\). In this case, $F_\kappa$ satisfies the following properties:
\[
F_\kappa'(z)=\kappa(2-z)-\frac{4(1-z)}{(1+z)^3},
\qquad
\lim_{z\to -1^+}F_\kappa(z)=+\infty,
\qquad\lim_{z\to 1^-} F_\kappa(z)=0.
\]

The saddle-node bifurcation for type 2 equilibria occurs when $F_\kappa(z) = \eta$ has
a double root in $(-1, 1)$, i.e. when $F_\kappa(z) = \eta$ and $F_\kappa'(z) = 0$
hold simultaneously. Setting $F_\kappa'(z) = 0$ gives
\[
\kappa = K(z) \coloneq \frac{4 (1 - z)}{(1 + z)^3 (2 - z)}.
\]

Since $K$ is strictly decreasing on $(-1, 1)$, this defines $\kappa$ uniquely as a
function of $z$ and, therefore, of $z^\star$. Substituting this expression back into
$F_\kappa(z^\star) = \eta$ then determines $\eta$ as a function of $z^\star$ as well.
These two expressions form the parametrization of the $\Gamma^{(2)}_{\mathrm{SN}}$ curve. Since
$K$ is bijective, the parametrization can also be considered the graph of a function
$\eta^{(2)}_{\mathrm{SN}}(\kappa)$ for $\kappa > 0$.

To determine the number of equilibria in each region, we start by looking at the $\kappa
\le 0$ half-plane. There, we have that \(F_\kappa'(z)<0\) for all \(z\in(-1,1)\), so
\(F_\kappa\) is strictly decreasing from \(+\infty\) to \(0\). Therefore, $F_\kappa(z) =
\eta$ has exactly one solution for \(\eta>0\) and no solutions for \(\eta<0\). On the
other hand, for $\kappa > 0$, we have that $\eta^{(2)}_{\mathrm{SN}}(\kappa) < 0$ by its
parametric representation (see also~\Cref{fig:type2.no.equilibria}). As
$F_\kappa(z^\star) = \eta^{(2)}_{\mathrm{SN}}(\kappa) < 0$, we can deduce that $F_\kappa$
decreases from $+\infty$ to a negative $\eta^{(2)}_{\mathrm{SN}}(\kappa)$ and then rises back to
$0$ as $z^\star \to 1$. Counting the intersections with the horizontal line for every
$\eta$ gives the number of equilibria shown in the proposition.

To determine the type of each equilibrium, we examine the characteristic equation~\eqref{eq:type2.char}
in the delay-free case. Setting $\tau = 0$ and $L(0) = 1$, the equation simplifies to
\begin{equation}\label{eq:type2.char.tau0}
s^2 - (1 - z^{\star 2}) F_\kappa'(z^\star) = 0.
\end{equation}

Therefore, if \(F_\kappa'(z^\star)<0\) the equilibrium is a center and if
\(F_\kappa'(z^\star)>0\) the equilibrium is a saddle point. On \(\Gamma_{\mathrm{SN}}^{(2)}\)
one has \(F_\kappa'(z^\star)=0\), corresponding to a double zero eigenvalue. In the $N_2
= 2$ region, $F_\kappa$ has a unique minimum on $(-1, 1)$ and the two equilibria lie on
either side of this minimum. At the smaller root $F_\kappa' < 0$, so the equilibrium is
a center, and at the larger root $F_\kappa' > 0$, so the equilibrium is a saddle.
\end{proof}

\Cref{prop:type2.tau0} shows that there are at most two type 2 equilibria. In what follows,
we denote them by $z_s^\star$ and $z_c^\star$ for the type 2 delay-free saddle and
center equilibria, respectively.

\subsection{Effect of distributed delays on the delay-free saddle \texorpdfstring{$z_s^\star$}{zs}}

\begin{proposition}[Delay-independent instability of the type 2 delay-free saddle]
\label{prop:type2.saddle.persists}
Let \(z_s^\star\in(-1,1)\) be the type 2 equilibrium which is a saddle point for the
delay-free system, as defined in~\Cref{prop:type2.tau0}. Then, \(z_s^\star\) is unstable
regardless of the admissible delay kernel (in the sense
of~\Cref{def:admissible.kernels}) considered in~\eqref{eq:sys.2d}.
\end{proposition}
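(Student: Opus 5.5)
The plan is to reduce the claim to \Cref{prop:general}(ii) applied to the type~2 characteristic equation~\eqref{eq:type2.char}, which has exactly the form~\eqref{eq:char.gen} with $p=2$, $a=a_2(z_s^\star)$ and $b=b_2(\kappa,z_s^\star)$. It therefore suffices to show that at the delay-free saddle we have $a_2<b_2$. Once this is established, \Cref{prop:general}(ii) yields, for every $\tau\ge0$ and every admissible kernel, a real positive characteristic root. Linearized instability then follows from the standard theory of functional differential equations \cite[Chapters~1--2]{HaleVerduynLunel1993}.

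The key step is to translate the saddle condition into this inequality. From the delay-free characteristic equation~\eqref{eq:type2.char.tau0} (obtained with $L(0)=1$), we have
\[
s^2+a_2-b_2=s^2-(1-z^{\star 2})F_\kappa'(z^\star),
\]
so $b_2-a_2=(1-z^{\star2})F_\kappa'(z^\star)$. I would verify this identity directly from~\eqref{eq:a2.b2} and the formula for $F_\kappa'$ in the proof of \Cref{prop:type2.tau0}:
\[
(1-z^2)\Bigl(\kappa(2-z)-\tfrac{4(1-z)}{(1+z)^3}\Bigr)=\kappa(1-z^2)(2-z)-4\Bigl(\tfrac{1-z}{1+z}\Bigr)^2 .
\]
By \Cref{prop:type2.tau0}, the saddle $z_s^\star\in(-1,1)$ satisfies $F_\kappa'(z_s^\star)>0$, and $1-z_s^{\star2}>0$. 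Hence $b_2-a_2>0$.

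Next I would check that the remaining hypothesis of \Cref{prop:general}, namely $b\ne0$, holds. Since $a_2\ge0$ and $b_2>a_2$, we get $b_2>0$. (The case $z^\star=1$ is excluded.)

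The only potential subtlety is the passage from a positive real root of the characteristic equation to instability of the nonlinear delayed system. For admissible kernels, the exponential moment condition makes the linearization well-posed on a suitable fading-memory phase space, and a characteristic root with positive real part gives instability by the principle of linearized stability. I expect to cite this rather than prove it, exactly as is implicitly done in \Cref{prop:stab.type1}(ii). The genuine content is the sign identity $b_2-a_2=(1-z^{\star2})F_\kappa'(z^\star)$, which is routine algebra.
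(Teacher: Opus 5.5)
Your proposal is correct and follows the paper's proof. The paper obtains $a_2<b_2$ by noting that at the saddle the delay-free equation $s^2+a_2-b_2=0$ has real roots of opposite sign, which is the same fact as your identity $b_2-a_2=(1-z^{\star 2})F_\kappa'(z^\star)>0$; it then applies \Cref{prop:general}(ii) to get a positive real root for every $\tau\ge0$ and every admissible kernel.
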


\begin{proof}
If \(z_s^\star\) is a saddle point for \(\tau=0\), the delay-free characteristic
equation \(s^2 + a_2 - b_2 = 0\), where $a_2$ and $b_2$ are given by \eqref{eq:a2.b2},
has two real roots of opposite sign. Hence \(a_2 < b_2\) and the conclusion follows
from~\Cref{prop:general}(ii).
\end{proof}

\subsection{Effect of distributed delays on the delay-free center \texorpdfstring{$z_c^\star$}{zc}}

We now restrict our attention to the type 2 delay-free center $z_c^\star$ and study the
effects of adding a delay. The following proposition describes the stability for small
mean delay $\tau$.

\begin{proposition}[Small-delay effect on a type 2 center]
\label{prop:type2.small.delay}
Consider system \eqref{eq:sys.2d} with an admissible delay kernel family
$\{l_\tau\}_{\tau \ge 0}$, in the sense of~\Cref{def:admissible.kernels}. Then, there
exists \(\tau_0 > 0\) such that, for every \(0 < \tau < \tau_0\), the type 2 delay-free
center $z_c^\star$, defined in~\Cref{prop:type2.tau0}, is asymptotically stable for
\(\kappa>0\) and unstable for \(\kappa<0\).
\end{proposition}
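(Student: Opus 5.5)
The plan is to treat the characteristic equation \eqref{eq:type2.char}, $\Delta_2(s,\tau)=s^2+a_2-b_2L(\tau s)=0$, as a perturbation of the delay-free equation and follow the two critical roots for small $\tau>0$. By \Cref{prop:type2.tau0}, the center $z_c^\star$ satisfies $F_\kappa'(z_c^\star)<0$. By \eqref{eq:type2.char.tau0} this means $a_2-b_2>0$, so at $\tau=0$ the roots are $\pm\ii\omega_0$ with $\omega_0=\sqrt{a_2-b_2}>0$. From \eqref{eq:a2.b2}, $b_2=\kappa(1-z^{\star2})(2-z^\star)$ has the sign of $\kappa$ because $z^\star\in(-1,1)$; in particular $b_2\neq0$ whenever $\kappa\neq0$.

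\emph{Step 1 (motion of the critical pair).} Since $L$ is holomorphic on $\{\Re\xi>-\sigma\}$, the map $(s,\tau)\mapsto\Delta_2(s,\tau)$ is analytic near $(\ii\omega_0,0)$, including small negative $\tau$. We have $\partial_s\Delta_2=2s-b_2\tau L'(\tau s)$, which equals $2\ii\omega_0\neq0$ at $(\ii\omega_0,0)$. The implicit function theorem then gives a smooth branch $s(\tau)$ with $s(0)=\ii\omega_0$, and implicit differentiation yields
\[
s'(0)=-\frac{\partial_\tau\Delta_2}{\partial_s\Delta_2}\Big|_{(\ii\omega_0,0)}
=\frac{b_2\, s\, L'(0)}{2s}=-\frac{b_2}{2},
\]
using $L'(0)=-1$ from \Cref{rem:laplace.admissible}. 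Hence $\Re s(\tau)=-\tfrac{b_2}{2}\tau+O(\tau^2)$. The conjugate root behaves symmetrically, since $L$ is real on the real axis (it is the transform of a real measure).

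\emph{Step 2 (no other roots in the closed right half-plane).} This step is the main obstacle, because for $\tau>0$ the equation has infinitely many roots that do not exist at $\tau=0$. Suppose $\Re s\ge0$ and $\Delta_2(s,\tau)=0$. Then $|L(\tau s)|\le1$ by \Cref{rem:laplace.admissible}, so $|s^2+a_2|\le|b_2|$ and therefore $|s|^2\le a_2+|b_2|$. Thus every root with $\Re s\ge0$ lies in a fixed compact set $K$, uniformly in $\tau\ge0$. On a slightly enlarged compact neighbourhood of $K$ (reaching a little into $\Re s<0$), the function $\Delta_2(\cdot,\tau)$ converges uniformly to $s^2+a_2-b_2$ as $\tau\to0$, because $\tau K$ shrinks into the domain of holomorphy of $L$. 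I would take small disks around $\pm\ii\omega_0$. By Hurwitz/Rouché, for small $\tau$ each disk contains exactly one root, namely the branch from Step 1. On the compact remainder of the enlarged set, $|s^2+a_2-b_2|$ is bounded below, so there are no roots there. Consequently, for $0<\tau<\tau_0$, the only roots with $\Re s\ge0$ can be $s(\tau)$ and $\overline{s(\tau)}$.

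\emph{Step 3 (conclusion).} For $\kappa>0$ we have $b_2>0$, so $\Re s(\tau)<0$ for small $\tau>0$. By Step 2 all characteristic roots then lie in the open left half-plane, and $z_c^\star$ is asymptotically stable by the standard linearized stability principle for delay equations \cite{HaleVerduynLunel1993}. For $\kappa<0$ we have $b_2<0$, so $\Re s(\tau)>0$ and the equilibrium is unstable. Shrinking $\tau_0$ so that both the first-order expansion and the Rouché argument are valid completes the argument.
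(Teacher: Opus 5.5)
Your proposal is correct and follows essentially the same route as the paper. Both arguments perturb the critical pair $\pm\ii\sqrt{a_2-b_2}$ to first order, giving $\Re s \approx -b_2\tau/2$ with $\sign(b_2)=\sign(\kappa)$, and both use the a priori bound $|s|^2\le a_2+|b_2|$ on roots with $\Re s\ge 0$ together with a compactness argument to exclude any other right-half-plane roots. Your implicit function theorem and Hurwitz/Rouch\'e steps make rigorous the formal asymptotic expansion and the ``continuity of roots'' argument that the paper only invokes.
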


\begin{proof}
Since \(z_c^\star\) is a type 2 center of the delay-free system, we have \(a_2 > b_2\),
where $a_2$ and $b_2$ are given by \eqref{eq:a2.b2}. Based
on~\Cref{rem:laplace.admissible}, the Laplace transform $L$ satisfies \(L(\xi) = 1 - \xi +
O(\xi^2)\) as \(\xi \to 0\). Hence, for small $\tau$, the characteristic equation
\eqref{eq:type2.char} formally becomes
\[
s^2 + a_2 - b_2 + b_2 \tau s + O((\tau s)^2) = 0.
\]

We consider a standard asymptotic expansion for the roots $s_\pm(\tau)$ of the form
$s(\tau) = s_0 + \tau s_1 + O(\tau^2)$. Substituting this expansion in the characteristic
equation above and equating the terms, we find that
\begin{equation} \label{eq:type2.small.delay.expansion}
s_\pm(\tau) = \pm \ii \sqrt{a_2 - b_2} - \frac{b_2}{2} \tau + O(\tau^2),
\end{equation}
where the zero-order term represents the delay-free roots discussed
in~\Cref{prop:type2.tau0} and the first-order term represents the perturbation.
From~\eqref{eq:a2.b2}, it follows that $\sign(b_2) = \sign(\kappa)$. Therefore, the pair
$s_\pm(\tau)$ moves into the right half-plane for $\kappa < 0$ and into the left half-plane
for $\kappa > 0$. In particular, when $\kappa < 0$, the equilibrium $z_c^\star$ is
unstable for all sufficiently small $\tau > 0$.

For $\kappa > 0$, this argument is not sufficient to determine the asymptotic stability
of $z_c^\star$. To prove asymptotic stability in this case, we make use of the
continuity of roots of the characteristic equation~\eqref{eq:type2.char} with respect to
its parameters. For admissible kernels and $\Re s \ge 0$, we have that $|L(\tau s)| \le 1$
(see~\Cref{rem:laplace.admissible}), which implies that any root with $\Re s \ge 0$ is bounded by
\[
|s|^2 = |b_2 L(\tau s) - a_2| \le a_2 + b_2,
\]
i.e. it lies in the compact disk $|s| \le R_0 \coloneq \sqrt{a_2 + b_2}$ for all $\tau >
0$. By continuity of the roots and compactness of the domain, any root of $\Delta_2(s,
\tau)$ with $\Re s \ge 0$ must lie near the roots of $\Delta_2(s, 0)$ for small $\tau > 0$.
From~\Cref{prop:type2.tau0}, we know that the only delay-free roots are purely
imaginary. Moreover, by~\eqref{eq:type2.small.delay.expansion}, the roots of $\Delta_2(s,
\tau)$ for small $\tau > 0$ satisfy $\Re{s_\pm(\tau)} < 0$. Therefore, no root with
$\Re{s} \ge 0$ exists for sufficiently small $\tau > 0$ and $\kappa > 0$, so $z_c^\star$
is asymptotically stable.
\end{proof}

\subsection{Hopf curves at \texorpdfstring{$z_c^\star$}{zc} in the
            \texorpdfstring{$(\kappa, \eta)$}{(kappa, eta)}-plane}
\label{sec:type2.hopf}

To determine whether the stability of the delay-free center $z_c^\star$ may be lost or
regained as \(\tau\) increases, we look for purely imaginary characteristic roots
\(s=\ii \nu/\tau\), with \(\nu > 0\), of the characteristic equation
\eqref{eq:type2.char}. Using the polar representation of the Laplace transform from
\Cref{def:phase.regular.kernels}, we can write
\[
-\frac{\nu^2}{\tau^2} + a_2 - b_2 r(\nu)\cos(a(\nu)) + \ii b_2\,r(\nu)\sin(a(\nu)) = 0.
\]

Separating the real and imaginary parts, we obtain the following equivalent system
\begin{equation}\label{eq:type2.realimag.rewrite}
\begin{cases}
-\dfrac{\nu^2}{\tau^2} + a_2 - b_2 r(\nu) \cos(a(\nu)) = 0, \\
b_2 r(\nu) \sin(a(\nu)) = 0.
\end{cases}
\end{equation}

As \(r(\nu) > 0\), assuming $\kappa \neq 0$ in~\eqref{eq:a2.b2}, the second equation in
\eqref{eq:type2.realimag.rewrite} implies
\begin{equation}\label{eq:a.n.pi}
a(\nu)=n\pi,
\qquad n \in \{1, 2, \dots\}.
\end{equation}

Let \(\nu_n > 0\) be the $n$-th solution to~\eqref{eq:a.n.pi}, when it exists. Then, the
first equation in \eqref{eq:type2.realimag.rewrite}, together with the type 2
equilibrium equation \eqref{eq:P2.type2}, leads to a parametric representation of the
Hopf candidate curves associated with the type 2 family. They are given by
\begin{equation}\label{eq:type2.Hopf.curves.rewrite}
\Gamma_{\mathrm{H},n}^{(2)}(l_\tau): \qquad
\left\{
\begin{aligned}
\kappa & = (-1)^n \frac{4\left(\frac{1 - u}{1 + u}\right)^2 - \frac{\nu_n^2}{\tau^2}}
                       {(1 - u^2)(2 - u) r(\nu_n)}, \\
\eta & =
\left(\frac{1 - u}{1 + u}\right)^2
- (-1)^n \frac{3 - u}{2(1 + u)(2 - u) r(\nu_n)} \left(
    4 \left(\dfrac{1 - u}{1 + u}\right)^2
    - \frac{\nu_n^2}{\tau^2}
\right),
\end{aligned}
\right.
\end{equation}
where the parameter is $u \in (-1, 1)$. The expressions for $a_2$ and $b_2$ have been
written out explicitly from~\eqref{eq:a2.b2}. Among these parametric curves, we are
interested in those corresponding to the center branch of the delay-free system
(see~\Cref{prop:type2.tau0}), i.e. for which $F_\kappa'(u) < 0$.

\begin{example}[Weak Gamma kernel]
For the weak Gamma kernel (see~\Cref{ex:weak.gamma.type1}), we have that
\(a(\nu)\in(0,\pi/2)\) for all \(\nu>0\), so equation \eqref{eq:a.n.pi} does not have a
solution. Hence, the type 2 delay-free center does not undergo a Hopf bifurcation in the
case of a weak Gamma kernel. This means that for any $\tau > 0$, this equilibrium point
remains asymptotically stable when $\kappa > 0$ and unstable when $\kappa<0$, as stated
in~\Cref{prop:type2.small.delay} for small $\tau$.
\end{example}

\begin{example}[Strong Gamma kernel]
For the strong Gamma kernel (see~\Cref{ex:strong.gamma.type1}), we have that
\(a(\nu) \in (0, \pi)\) for all \(\nu > 0\), so equation \eqref{eq:a.n.pi}
has no solution. Therefore, the type 2 delay-free center does not undergo a Hopf
bifurcation for the strong Gamma kernel either.
\end{example}

\begin{example}[Dirac kernel]
For the Dirac kernel (see~\Cref{ex:dirac.type1}), $a(\nu) = \nu$, so~\eqref{eq:a.n.pi}
has infinitely many solutions of the form $\nu_n = n \pi$, for $n \in \{1, 2, \dots\}$.
Then, the bifurcation curve~\eqref{eq:type2.Hopf.curves.rewrite} becomes
\begin{equation}\label{eq:type2.Hopf.curves.Dirac.rewrite}
\Gamma_{\mathrm{H},n}^{(2)}: \qquad
\left\{
\begin{aligned}
\kappa & = (-1)^n \frac{4\left(\frac{1 - u}{1 + u}\right)^2 - \frac{\pi^2 n^2}{\tau^2}}
                       {(1 - u^2)(2 - u)}, \\
\eta & =
\left(\frac{1 - u}{1 + u}\right)^2
- (-1)^n \frac{3 - u}{2(1 + u)(2 - u)} \left(
    4 \left(\dfrac{1 - u}{1 + u}\right)^2
    - \frac{\pi^2 n^2}{\tau^2}
\right).
\end{aligned}
\right.
\end{equation}

This yields an infinite family of candidate Hopf curves $\Gamma_{\mathrm{H},n}^{(2)}$, $n \in
\{1, 2, \dots\}$, in the $(\kappa,\eta)$-plane for a fixed $\tau > 0$. The corresponding
critical values of $\tau$ for fixed $(\kappa,\eta)$ are determined in
\Cref{prop:type2.dirac.hopf}.
\end{example}

\subsection{Dirac case: Hopf bifurcations at \texorpdfstring{$z_c^\star$}{zc} with respect to \texorpdfstring{$\tau$}{tau}}

Among the three kernel families considered in this paper, the Dirac kernel is the only
one for which the type 2 center $z_c^\star$ can undergo a Hopf bifurcation, as
established above. In what follows, we deduce the critical values of
the delay $\tau > 0$ for which Hopf bifurcations take place in a neighborhood of
$z_c^\star$ and for $\kappa \ne 0$ (see also~\Cref{fig:critical.delay.type2}).

\begin{proposition}[Dirac kernel: Hopf bifurcation of the type 2 delay-free center]
\label{prop:type2.dirac.hopf}
Let $l_\tau(t) = \delta(t - \tau)$ be a family of Dirac kernels, with $\tau > 0$. Let
$\kappa \ne 0$ and \(z_c^\star=z_c^\star(\kappa,\eta)\in(-1,1)\) be the type 2 equilibrium
which is a center of the delay-free system, as shown in~\Cref{prop:type2.tau0}. Then,
the characteristic equation \eqref{eq:type2.char} has purely imaginary roots if and only if
\[
\tau = \tau_n^{(2)} \coloneq \frac{n\pi}{\sqrt{a_2-(-1)^n b_2}},
\]
for those integers \(n \in \{1, 2, \dots\}\) such that \(a_2 - (-1)^n b_2 > 0\), where
$a_2$ and $b_2$ are given by \eqref{eq:a2.b2}. At the critical values $\tau_n^{(2)}$, a Hopf
bifurcation takes place in a neighborhood of $z_c^\star$, and the following
transversality condition holds:
\begin{equation}\label{eq:type2.transversality}
\sign (\Re s'(\tau_n^{(2)})) = (-1)^{n + 1} \sign (\kappa).
\end{equation}
\end{proposition}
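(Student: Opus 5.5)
The plan is to reduce the statement to \Cref{prop:general}(iii) with $p=2$, $a=a_2$, $b=b_2$ and $L(s)=e^{-s}$. Before invoking it, its hypotheses need checking. Since $z_c^\star\in(-1,1)$ we have $1-z_c^{\star 2}>0$ and $2-z_c^\star>0$, so by \eqref{eq:a2.b2} $b_2\neq0$ whenever $\kappa\neq0$, and moreover $\sign(b_2)=\sign(\kappa)$. Because $z_c^\star$ is a delay-free center, \Cref{prop:type2.tau0} (through \eqref{eq:type2.char.tau0}) gives $a_2-b_2=-(1-z_c^{\star2})F_\kappa'(z_c^\star)>0$, that is, $a>b$. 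The hypotheses of \Cref{prop:general}(iii) are therefore met.

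For the case $p=2$, \Cref{prop:general}(iii) says that purely imaginary roots $\pm\ii\omega$ with $\omega>0$ occur exactly when $\tau=\tau_n=n\pi/\sqrt{a_2-(-1)^nb_2}$ for those $n\ge1$ with $a_2-(-1)^nb_2>0$. If I were to verify this directly, I would substitute $s=\ii\omega$ into $s^2+a_2-b_2e^{-\tau s}=0$. The imaginary part gives $b_2\sin(\omega\tau)=0$, so $\omega\tau=n\pi$, consistent with \eqref{eq:a.n.pi}. The real part gives $\omega^2=a_2-(-1)^nb_2$. The root $s=0$ is excluded because $a_2\neq b_2$. This yields the stated $\tau_n^{(2)}$, together with simplicity of the roots and the transversality sign $(-1)^{n+1}\sign(b_2)$ from \eqref{eq:transversality.type2}. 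Replacing $\sign(b_2)$ by $\sign(\kappa)$ then gives \eqref{eq:type2.transversality}.

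To conclude that a Hopf bifurcation actually occurs at each $\tau_n^{(2)}$, I would apply the standard Hopf theorem for retarded functional differential equations \cite[Chapter~3]{HassardKazarinoffWan1981}. This requires three things: a simple pair $\pm\ii\omega_n$, the transversality $\Re s'(\tau_n)\neq0$, and no other root on the imaginary axis that is resonant with this pair. The last condition is the main obstacle. For fixed $\tau=\tau_n$, a second pair could lie on the axis at $\omega_m=m\pi/\tau_n$ with $\omega_m^2=a_2-(-1)^mb_2$. Since only two values $a_2\pm b_2$ are possible, such a coincidence forces $m$ and $n$ to have the same parity and $m=n$, so $\omega_m=\omega_n$. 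Zero is not a root, as already noted. Non-resonance therefore holds automatically.

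The remaining nondegeneracy issue is that the equilibrium stays a smooth branch in $\tau$. This holds because the equilibrium $z_c^\star$ does not depend on $\tau$ at all, the delay only entering through the linearization.
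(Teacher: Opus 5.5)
Your reduction to \Cref{prop:general}(iii) with $p=2$, $a=a_2$, $b=b_2$ is exactly the paper's proof. Your checks that $b_2\neq0$, $\sign(b_2)=\sign(\kappa)$ and $a_2-b_2=-(1-z_c^{\star 2})F_\kappa'(z_c^\star)>0$ correctly supply hypotheses the paper leaves implicit. The error is in the step you add to justify the Hopf theorem: non-resonance does \emph{not} hold automatically. From $\omega_m^2\in\{a_2+b_2,\,a_2-b_2\}$ you can only conclude that $m\equiv n\pmod 2$ forces $m=n$. If $m$ and $n$ have opposite parity, then $\omega_m\neq\omega_n$, but nothing is contradicted. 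The coincidence just means $\tau_m^{(2)}=\tau_n^{(2)}$, i.e.\ $m^2\bigl(a_2-(-1)^nb_2\bigr)=n^2\bigl(a_2+(-1)^nb_2\bigr)$. This is a codimension-one condition on $(\kappa,\eta)$, and it is satisfied on explicit curves.

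The paper computes two of these curves in \Cref{rem:type2.stability.switching}. The equality $\tau_1^{(2)}=\tau_2^{(2)}$ holds exactly on $\Gamma_{\mathrm{sw}}^{(2,-)}$, where $3a_2=-5b_2$; this curve has $\kappa<0$, $\eta>0$, so it lies in the center region. The equality $\tau_2^{(2)}=\tau_3^{(2)}$ holds on $\Gamma_{\mathrm{sw}}^{(2,+)}$, where $13b_2=5a_2$.

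On $\Gamma_{\mathrm{sw}}^{(2,-)}$, at $\tau=\tau_1^{(2)}$, the characteristic equation has roots $\pm\ii\omega_1$ and $\pm\ii\omega_2$ with $\omega_2=2\pi/\tau=2\omega_1$. This is a 1:2 resonance, precisely what your own non-resonance hypothesis excludes, so the simple Hopf theorem cannot be invoked there. On $\Gamma_{\mathrm{sw}}^{(2,+)}$, two pairs with frequency ratio $3:2$ sit on the imaginary axis simultaneously, so the point is a double-Hopf point rather than a simple Hopf point.

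The corrected argument asserts the Hopf bifurcation only for $(\kappa,\eta)$ off the exceptional set where $\tau_m^{(2)}=\tau_n^{(2)}$ for some $m\neq n$. If your version of the theorem only requires that $\ii k\omega_n$, $k\in\mathbb{Z}$, not be roots, it suffices to remove the cases $m=kn$; these require $n$ odd, $k$ even and $\kappa<0$. The paper's one-line proof is also silent on non-resonance, so its statement should likewise be read as generic in $(\kappa,\eta)$. With this restriction made explicit, your argument goes through.
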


\begin{proof}
The proof follows from \Cref{prop:general}(iii) and $\sign(b_2) = \sign(\kappa)$
from~\eqref{eq:a2.b2}.
\end{proof}

\begin{figure}[h]
    \centering
    \begin{subfigure}{0.49\textwidth}
    \includegraphics[width=\linewidth]{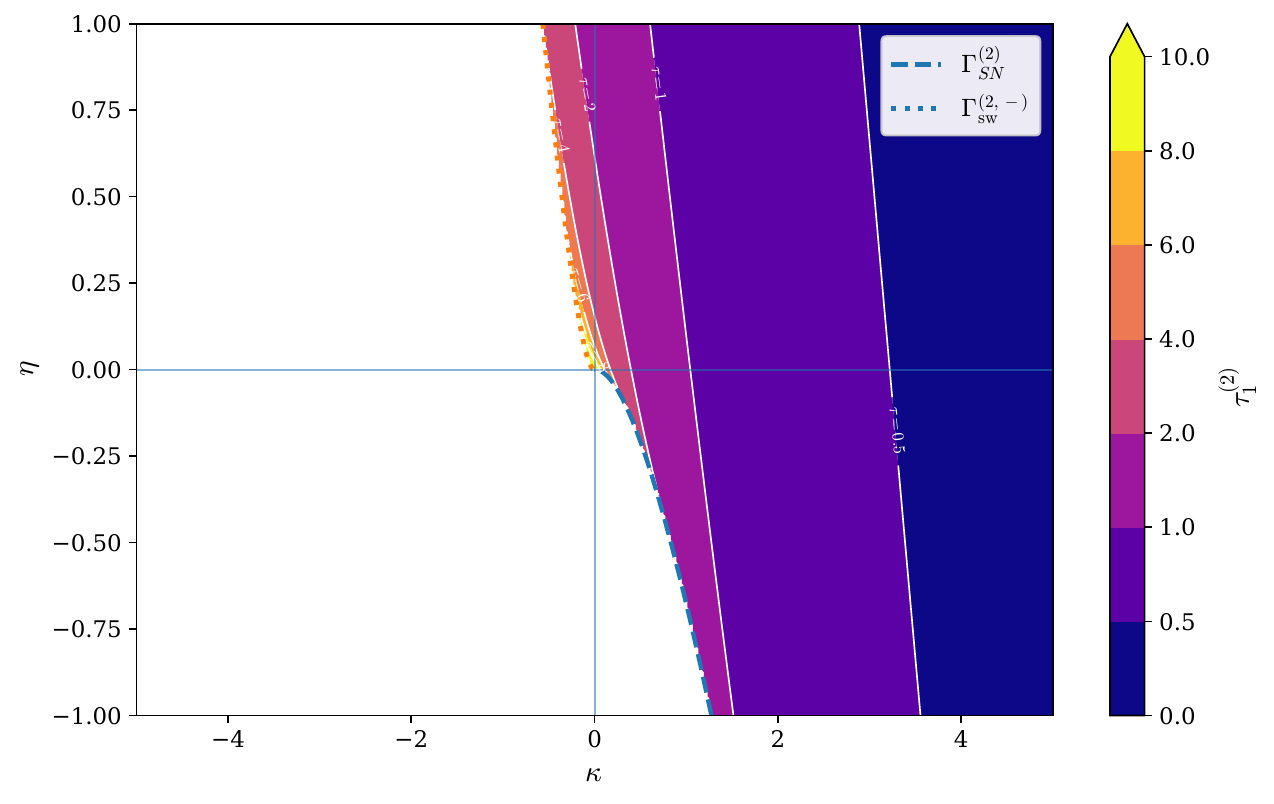}

    \caption{\tiny First critical delay $\tau_1^{(2)}$ for the type 2 center.}
    \end{subfigure}
    \begin{subfigure}{0.49\textwidth}
    \includegraphics[width=\linewidth]{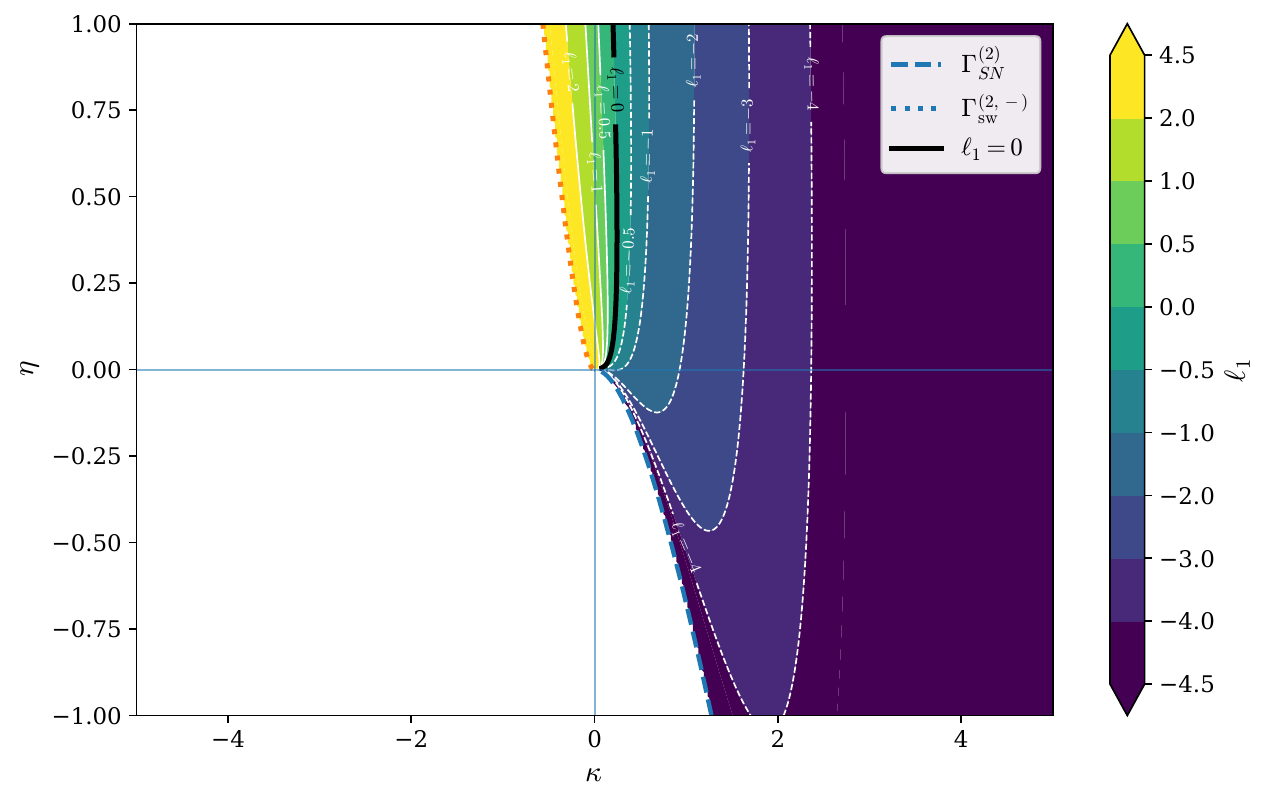}

    \caption{\tiny First Lyapunov coefficient $\ell_1$ at the first critical delay $\tau_1^{(2)}$.}
    \end{subfigure}

    \caption{(left) First critical delay $\tau_1^{(2)}$ in the discrete-delay case and (right)
    the corresponding first Lyapunov coefficient $\ell_1$ evaluated at $\tau = \tau_1^{(2)}$ for the
    Hopf bifurcation of the unique type 2 delay-free center $z_c^\star$.}
    \label{fig:critical.delay.type2}
\end{figure}

In~\Cref{fig:critical.delay.type2}, the left panel shows that the first odd critical
value for the type 2 delay-free center is \(\tau_1^{(2)}=\pi/\sqrt{a_2+b_2}\). For
\(\kappa>0\), this is the first critical value at which the equilibrium, which is
asymptotically stable for small \(\tau > 0\), loses stability by a Hopf bifurcation. For
\(\kappa<0\), whenever \(\tau_1^{(2)}<\tau_2^{(2)}\), the same critical value $\tau_1^{(2)}$ instead
corresponds to the first stabilizing threshold of the delay-free center. The right panel
displays the first Lyapunov coefficient \(\ell_1\) evaluated at \(\tau=\tau_1^{(2)}\). The
contour \(\ell_1=0\) separates the parameter values for which the Hopf bifurcation changes
criticality. In particular, for \(\kappa>0\), negative values of \(\ell_1\) correspond to a
supercritical Hopf bifurcation at $z_c^\star$, hence to the appearance of a stable
periodic orbit in a neighborhood of \(\tau_1^{(2)}\), while positive values correspond to a
subcritical Hopf bifurcation.

\begin{figure}[ht!]
    \centering
    \includegraphics[width=0.55\linewidth]{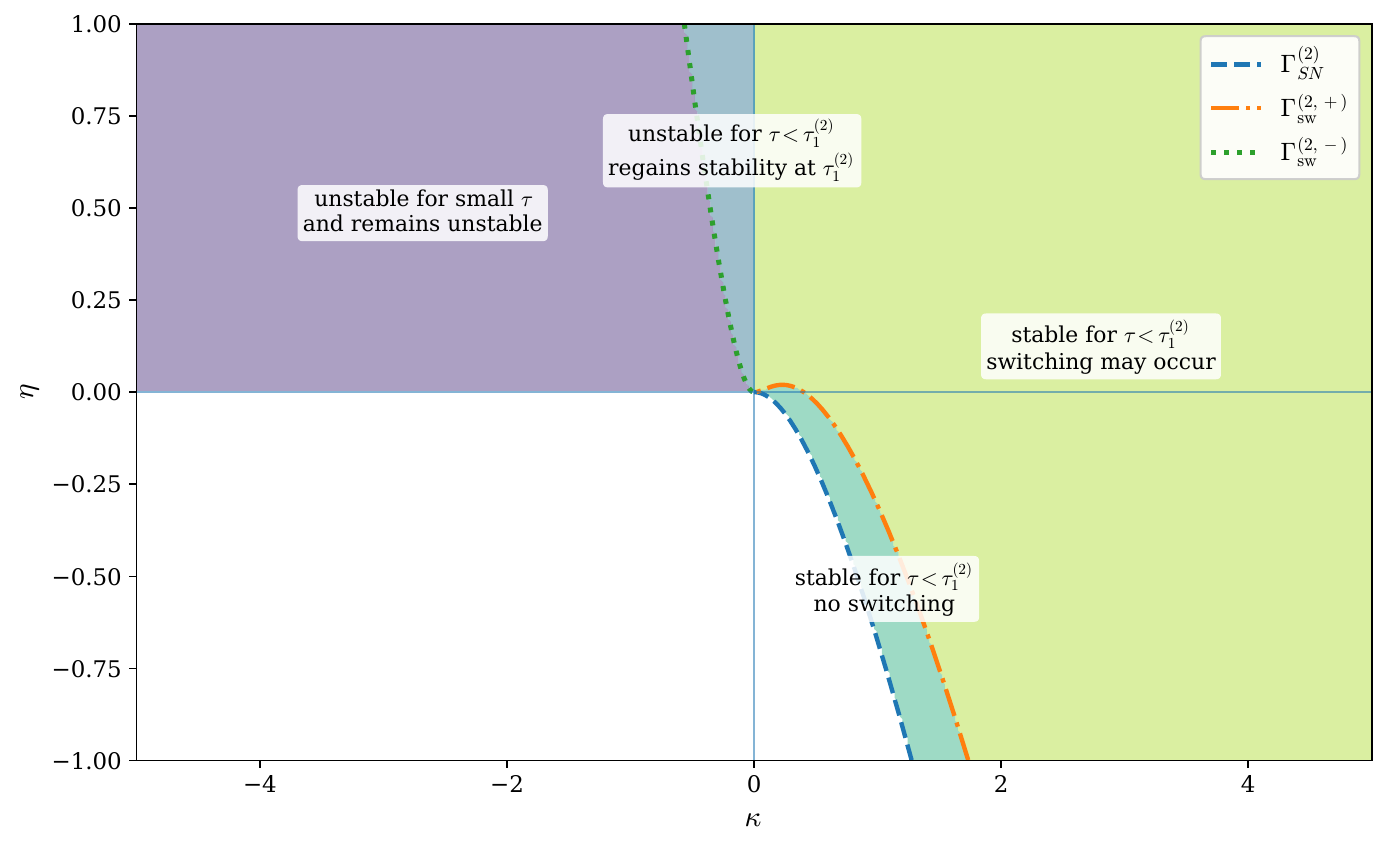}
    \caption{Stability scenarios of the type 2 delay-free center $z_c^\star$ when a discrete Dirac kernel with delay $\tau$ is considered in the system.}
    \label{fig:stab.switching.center}
\end{figure}

\begin{remark}[Stability switching in the Dirac case]
\label{rem:type2.stability.switching}
We construct the boundaries of the first stability-switching and stabilization regions
shown in~\Cref{fig:stab.switching.center}. Further stability switching may occur for
larger values of the delay whenever successive critical values satisfy the appropriate
ordering. The transversality formula \eqref{eq:type2.transversality} shows that the
crossing directions alternate with \(n\).

If \(\kappa>0\), then odd crossings are destabilizing, and even crossings are
stabilizing. The delay-free center $z_c^\star$ is asymptotically stable for
$\tau\in(0,\tau_1^{(2)})$ and loses stability by a Hopf bifurcation at $\tau=\tau_1^{(2)}$.
Stability can be regained after this first loss if and only if
\[
\tau_2^{(2)} < \tau_3^{(2)}
\quad \iffarrow \quad
13 b_2 < 5 a_2.
\]

The boundary of this stability-switching region is deduced by imposing $13 b_2 = 5 a_2$
and substituting into the equilibrium equation~\eqref{eq:P2.type2}. This gives the
parametric curve (see~\Cref{fig:stab.switching.center}):
\[
\Gamma_{\mathrm{sw}}^{(2,+)}:\qquad
\left\{
\begin{aligned}
\kappa & = \frac{20(1-u)}{13(1+u)^3(2-u)}, \\
\eta & = -\frac{(1 - u)^2(13 u^2 - 23u + 4)}{13 (1 + u)^3(2 - u)},
\end{aligned}
\right.
\qquad u \in(-1,1).
\]
Therefore, for $\kappa > 0$, the equilibrium is stable for $\tau\in(0,\tau_1^{(2)})\cup(\tau_2^{(2)},\tau_3^{(2)})$.

If \(\kappa<0\), then odd crossings are stabilizing and even crossings are
destabilizing. In this case, the equilibrium is unstable for small values of $\tau$, but
may become asymptotically stable at the first odd critical delay \(\tau_1^{(2)}\) (which
exists, provided that
$a_2 + b_2 > 0$) if
\[
\tau_1^{(2)} < \tau_2^{(2)}
\quad \iffarrow \quad
3 a_2 > -5 b_2.
\]

Equivalently, the boundary of the delay-induced stabilization region is
\[
\Gamma_{\mathrm{sw}}^{(2,-)}:\qquad
\left\{
\begin{aligned}
\kappa & = -\frac{12 (1 - u)}{5 (1 + u)^3 (2 - u)}, \\
\eta & = \frac{(1 - u)^2 (28 - u - 5 u^2)}{5 (1 + u)^3 (2 - u)},
\end{aligned}
\right.
\qquad u \in(-1, 1).
\]
Therefore, for $\kappa < 0$, the equilibrium is asymptotically stable for
$\tau\in(\tau_1^{(2)},\tau_2^{(2)})$.
\end{remark}

\section{Synthesis of the local bifurcation structure in the discrete-delay case}
\label{sec:classification.figure}

\Cref{fig:attractor.classification} provides an overview of the local bifurcation
structure in the \((\kappa,\eta)\)-plane for four values of the delay $\tau$ in a Dirac
kernel: \(\tau = 0.1, 0.5, 1\) and \(2\). The two saddle-node curves
\(\Gamma_{\mathrm{SN}}^{(1)}\) (see~\Cref{prop:type1.regions}) and \(\Gamma_{\mathrm{SN}}^{(2)}\)
(see~\Cref{prop:type2.tau0}) delimit the regions of existence of type 1 and type 2
equilibria, respectively, and are independent of the delay. On the other hand, the Hopf
curves \(\Gamma_H^{(1)}\) (see~\Cref{prop:type1.dirac.hopf}) and \(\Gamma_H^{(2)}\)
(see~\Cref{prop:type2.dirac.hopf}) depend on \(\tau\) and describe the loss or recovery
of stability of the corresponding equilibrium branches. Thus,
\Cref{fig:attractor.classification} should be understood as a local branch-based summary
of the delay-dependent stability changes induced by the Hopf mechanism.

\begin{figure}[ht!]
\centering
\includegraphics[width=0.85\linewidth]{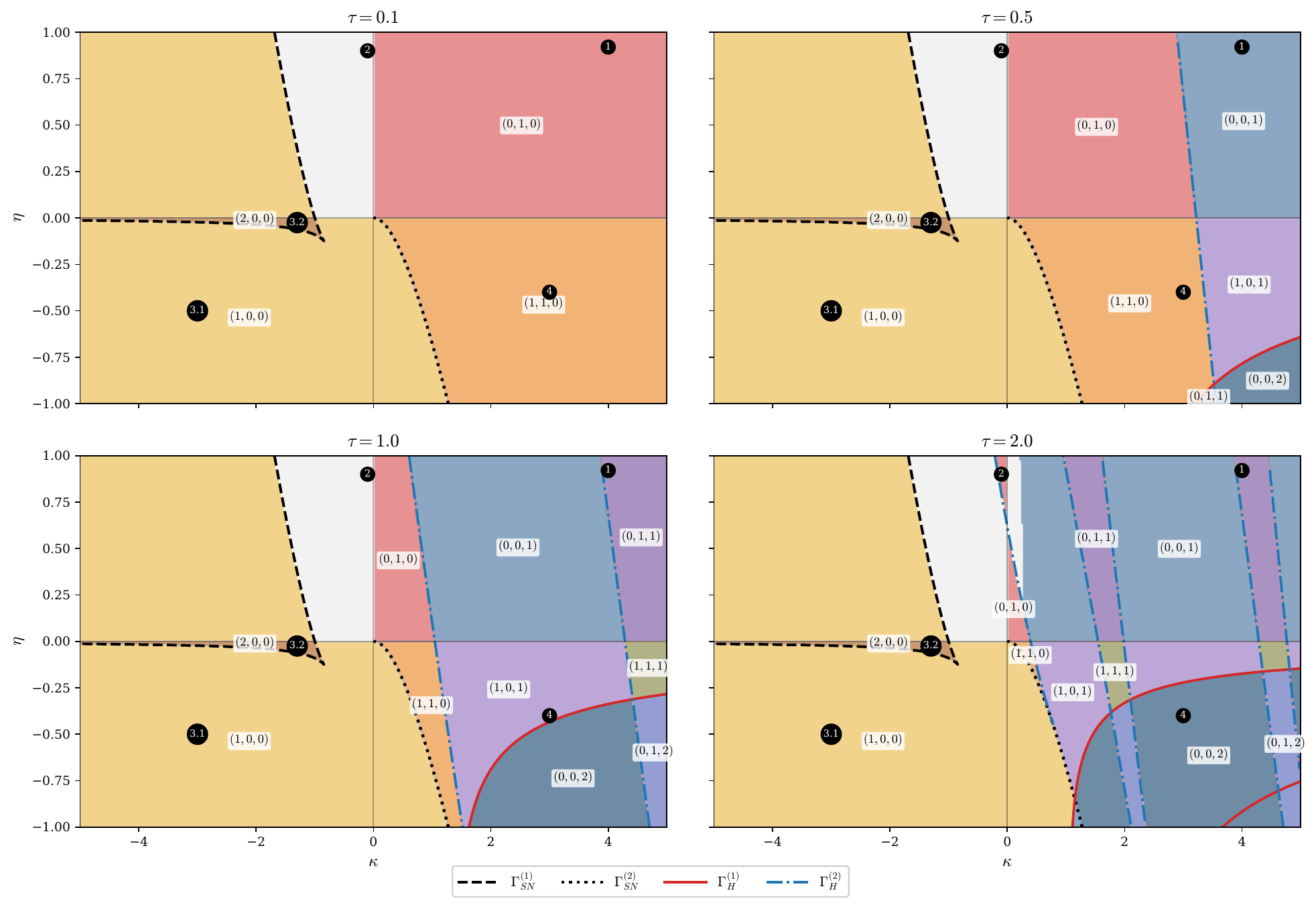}

\caption{Attractor classification for selected values of the discrete delay $\tau$.
Region labels denote $(S_1, S_2, C)$, with $S_1$ the number of stable type 1 equilibria,
$S_2$ the number of stable type 2 equilibria, and $C$ the number of stable cycles arising from
the first Hopf branches. Gray regions indicate parameter values for which the present local
bifurcation-based classification detects no stable attractor among the tracked branches.
The numbered points denote the parameter sets found in~\Cref{tab:representative.parameters}
and used in the numerical simulations.}
\label{fig:attractor.classification}
\end{figure}

As defined in the caption of~\Cref{fig:attractor.classification}, each colored region is
labeled with a triple $(S_1, S_2, C)$. This labeling reflects a \emph{local branch-based
classification}: the equilibrium counts \(S_1\) and \(S_2\) follow directly from the
stability results proved in the previous subsections, while the cycle count \(C\) is
obtained by combining the Hopf thresholds with the sign of the first Lyapunov
coefficient $\ell_1$ on the corresponding Hopf branch.

The panel corresponding to \(\tau=0.1\) is especially useful as a reference, since this
delay value lies below the smallest Hopf threshold across the entire displayed parameter
window. Therefore, no Hopf curve has yet been crossed, and the only delay effects
present in that panel are those consistent with the small-delay analysis of the type 2
center from~\Cref{prop:type2.small.delay}: for \(\kappa>0\), the delay-free center
$z_c^\star$ becomes asymptotically stable, while for \(\kappa<0\) it becomes unstable.
At the same time, the stable type 1 equilibria in the half-plane \(\kappa<0\) remain
stable, in agreement with the delay-independent type 1 stability result
from~\Cref{prop:type1.regions}.

As \(\tau\) increases, the delay-dependent Hopf curves \(\Gamma_H^{(1)}\) and
\(\Gamma_H^{(2)}\) intersect the equilibrium-existence regions and induce stability
changes. For \(\kappa>0\), \(\eta<0\), the type 2 delay-free center $z_c^\star$ becomes
stable for small delays and then loses stability through a Hopf bifurcation whenever
\(\tau=\tau_1^{(2)}\) exists (see~\Cref{prop:type2.dirac.hopf}). In the same region, the
stable type 1 equilibrium $z_a^\star$ also loses stability through a Hopf bifurcation
whenever \(\tau=\tau_0^{(1)}\) exists (see~\Cref{prop:type1.dirac.hopf}). This explains
the appearance, for intermediate and larger delays, of regions in which a stable
equilibrium coexists with a stable cycle, or in which a cycle remains the only stable
attractor detected by the local analysis. In the first quadrant, where type 1 equilibria
are absent, the changes are entirely due to the type 2 branch: the panels with
\(\tau=0.5, 1\) and \(2\) reflect the succession of stabilization, loss of stability by
a Hopf bifurcation, and possible restabilization established
in~\Cref{prop:type2.small.delay}, \Cref{prop:type2.dirac.hopf},
and~\Cref{rem:type2.stability.switching}.

The gray regions of~\Cref{fig:attractor.classification} correspond to parameter values
for which the present local classification does not detect a stable branch among the
tracked families. However, this should not be interpreted as the absence of attractors.
In particular, when
\begin{equation} \label{eq:gray.region}
\kappa < 0
\qquad \text{and} \qquad
\eta+4\kappa>0,
\end{equation}
we can prove the existence of at least one periodic orbit on the invariant circle
\(\rho=1\) (see~\Cref{prop:unit.circle.periodic.orbit}). Therefore, the gray region
satisfying~\eqref{eq:gray.region} reflects a limitation of
the present local branch classification rather than the absence of periodic dynamics.
The numerical simulations presented below suggest that this unit-circle
periodic orbit is attracting when the type 2 delay-free center is unstable.

\section{Numerical simulations}
\label{sec:num}

In this section, we illustrate the local bifurcation results obtained above by numerical
simulations of the reduced system~\eqref{eq:complex} in the discrete-delay case of the
Dirac kernel. The simulations are not intended to provide a complete global bifurcation
analysis. Rather, they serve three purposes: first, to verify the stability changes
predicted by the characteristic equations; second, to illustrate the coexistence of
stable equilibria and stable periodic orbits after Hopf bifurcations; and third, to show
what happens in parameter regions where the local branch-based classification of
\Cref{fig:attractor.classification}  does not detect a stable equilibrium branch.

\begin{table}[ht!]
\centering
\caption{Parameter values and delay values used in the numerical simulations.}
\label{tab:representative.parameters}

{
\small
\begin{tabular}{@{}l @{}c @{}c p{0.25\linewidth} l@{}}
\toprule
& \textbf{Quadrant} & \((\kappa,\eta)\)
& \textbf{Critical delays}
& \textbf{Simulation delays} \\
\midrule

\textbf{Set~1}
& I
& \((4,0.92)\)
& \(\tau_1^{(2)}\approx0.424,\ \tau_2^{(2)}\approx0.986\)\par
  \(\tau_3^{(2)}\approx1.273,\ \tau_4^{(2)}\approx1.971\)
& \(\{0,0.3,0.5,1.05,1.4,2.05\}\) \\[1mm]

\textbf{Set~2}
& II
& \((-0.1,0.9)\)
& \(\tau_1^{(2)}\approx1.857,\ \tau_2^{(2)}\approx3.488\)
& \(\{0,0.5,2.5,3.6\}\) \\[1mm]

\textbf{Set~3.1}
& III
& \((-3,-0.5)\)
& None among the tracked Hopf thresholds
& \(\{0,1\}\) \\[1mm]

\textbf{Set~3.2}
& III
& \((-1.3,-0.025)\)
& None among the tracked Hopf thresholds
& \(\{0,1\}\) \\[1mm]

\textbf{Set~4}
& IV
& \((3,-0.4)\)
& \(\tau_1^{(2)}\approx0.536,\ \tau_0^{(1)}\approx1.076\)\par
  \(\tau_2^{(2)}\approx1.318,\ \tau_3^{(2)}\approx1.607\)
& \(\{0,0.1,0.6,0.9,1.25,1.4,1.55,2\}\) \\

\bottomrule
\end{tabular}
}
\end{table}

Since the delayed system has an infinite-dimensional phase space, the basin plots below
should be interpreted as basins in a two-dimensional slice of this phase space. For each
initial point $z_0$ in the unit disk we use the constant history $z(t)=z_0$ for
$t\in[-\tau,0]$, integrate the delayed system, and classify the long-time behavior as
convergence to a type 1 equilibrium, convergence to a type 2 equilibrium, or convergence
to a periodic orbit. Therefore, the resulting plots describe the dependence of the
observed attractor on constant initial histories, rather than full basins in the
complete history space.

The five parameter sets used in the simulations are marked in
\Cref{fig:attractor.classification} and listed in \Cref{tab:representative.parameters}.
They were chosen to sample the main regimes predicted by the local theory:
delay-independent stability of a type 1 equilibrium, interaction between type 1 and type
2 Hopf mechanisms, stability switching of a single type 2 branch, and a parameter region
where an attracting periodic orbit on the invariant circle is observed, although it is
not predicted by the local equilibrium classification.

\newlength{\localcolumnwidth}

\subsection{Set 1: first quadrant, single type 2 equilibrium}

\begin{figure}[ht!]
\centering
\setlength{\localcolumnwidth}{0.2\linewidth}

\begin{subfigure}{\localcolumnwidth}
\includegraphics[width=\linewidth]{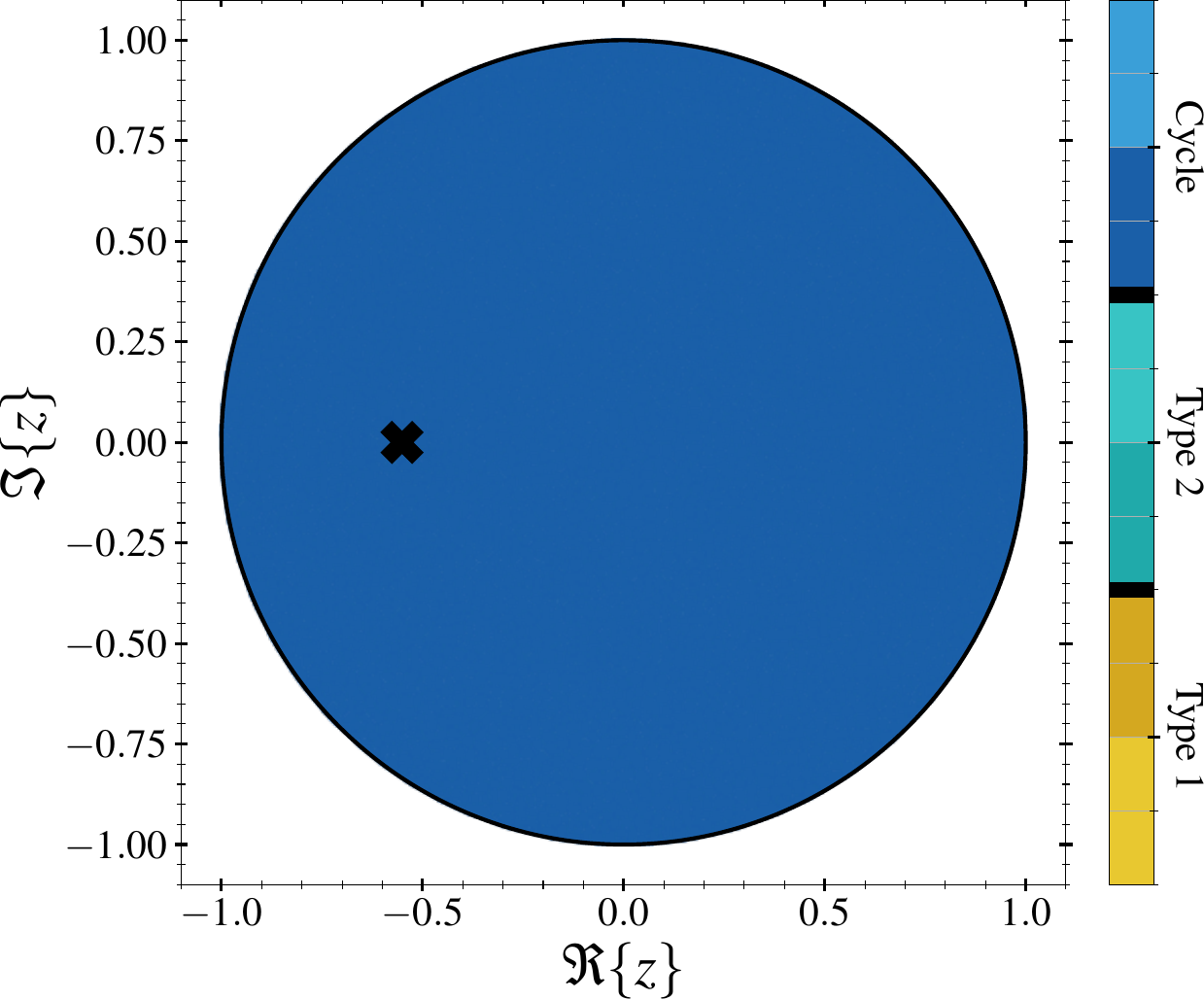}
\includegraphics[width=\linewidth]{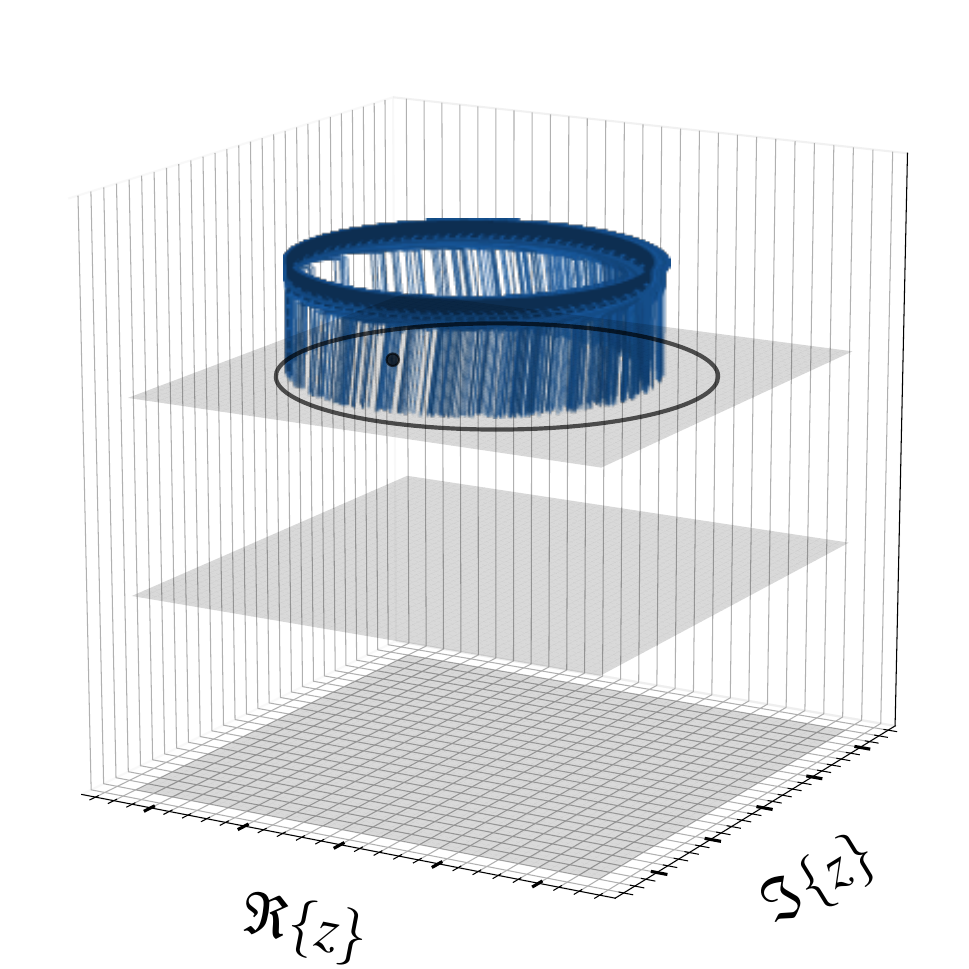}
\caption{$\tau = 0$}
\end{subfigure}
\begin{subfigure}{\localcolumnwidth}
\includegraphics[width=\linewidth]{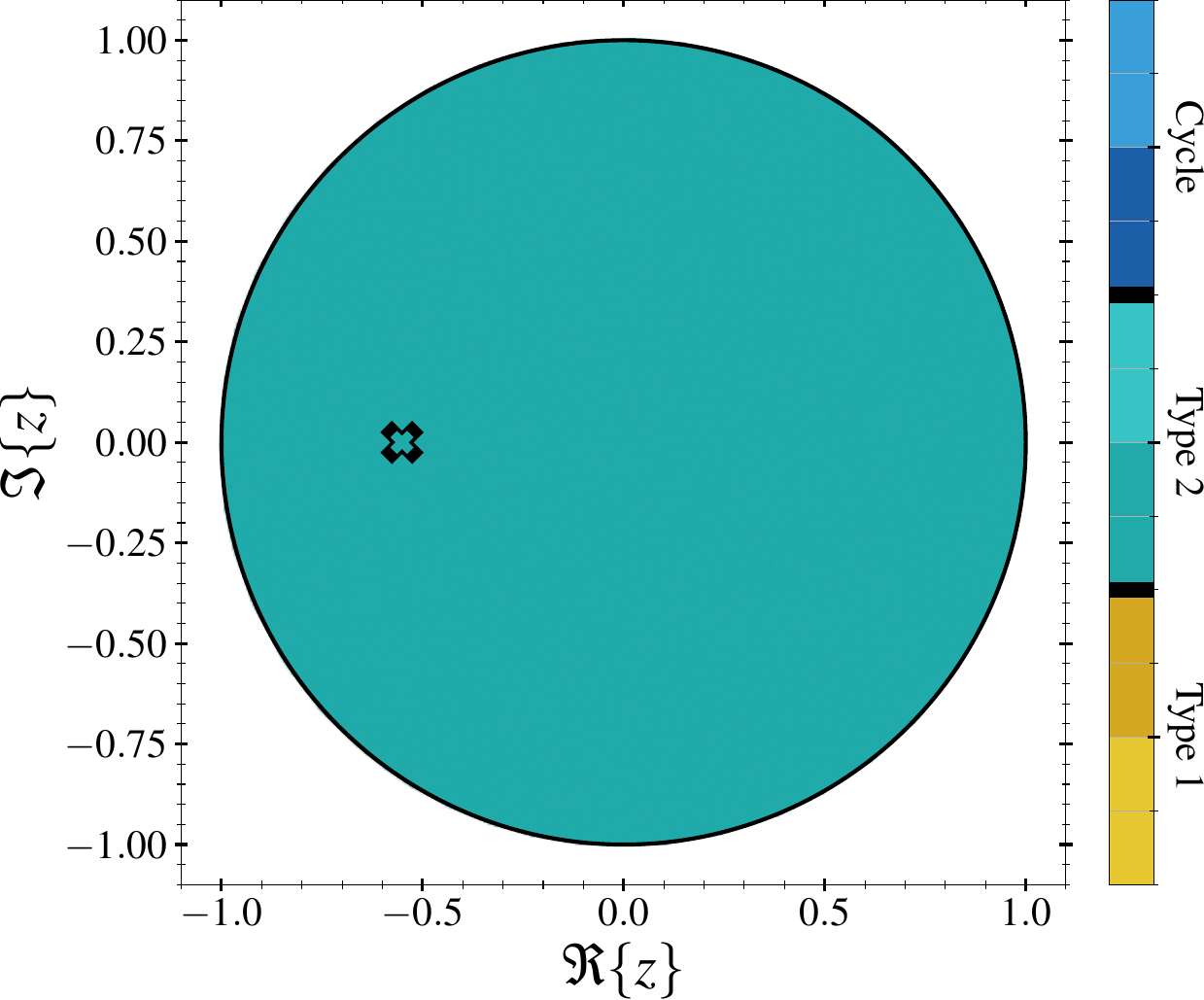}
\includegraphics[width=\linewidth]{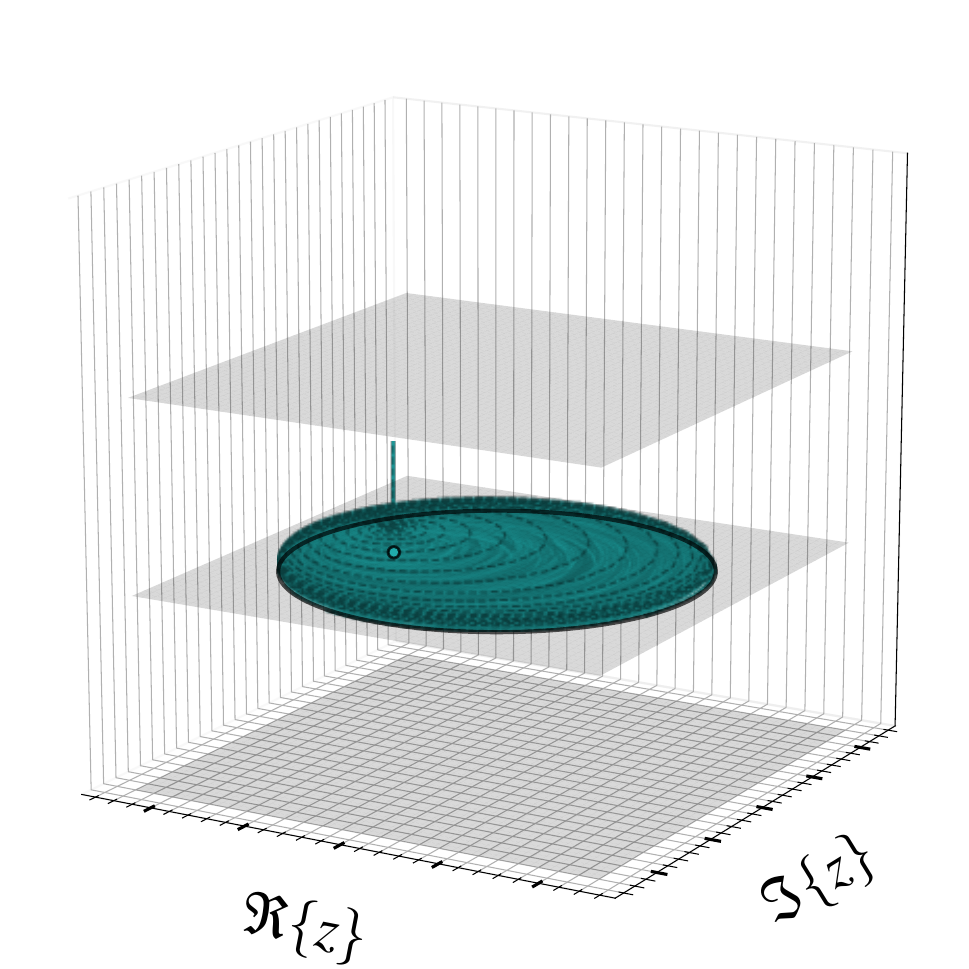}
\caption{$\tau = 0.3$}
\end{subfigure}
\begin{subfigure}{\localcolumnwidth}
\includegraphics[width=\linewidth]{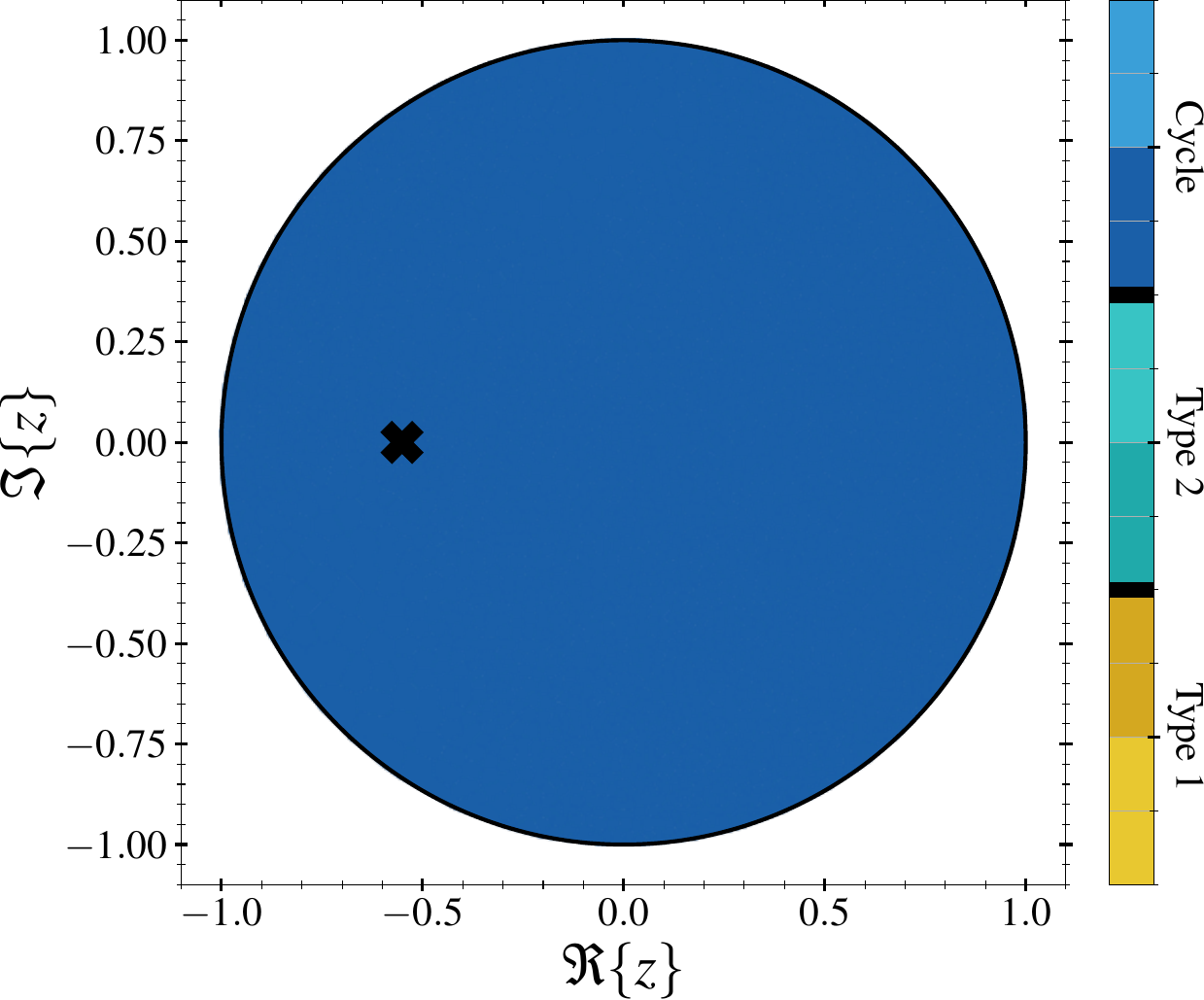}
\includegraphics[width=\linewidth]{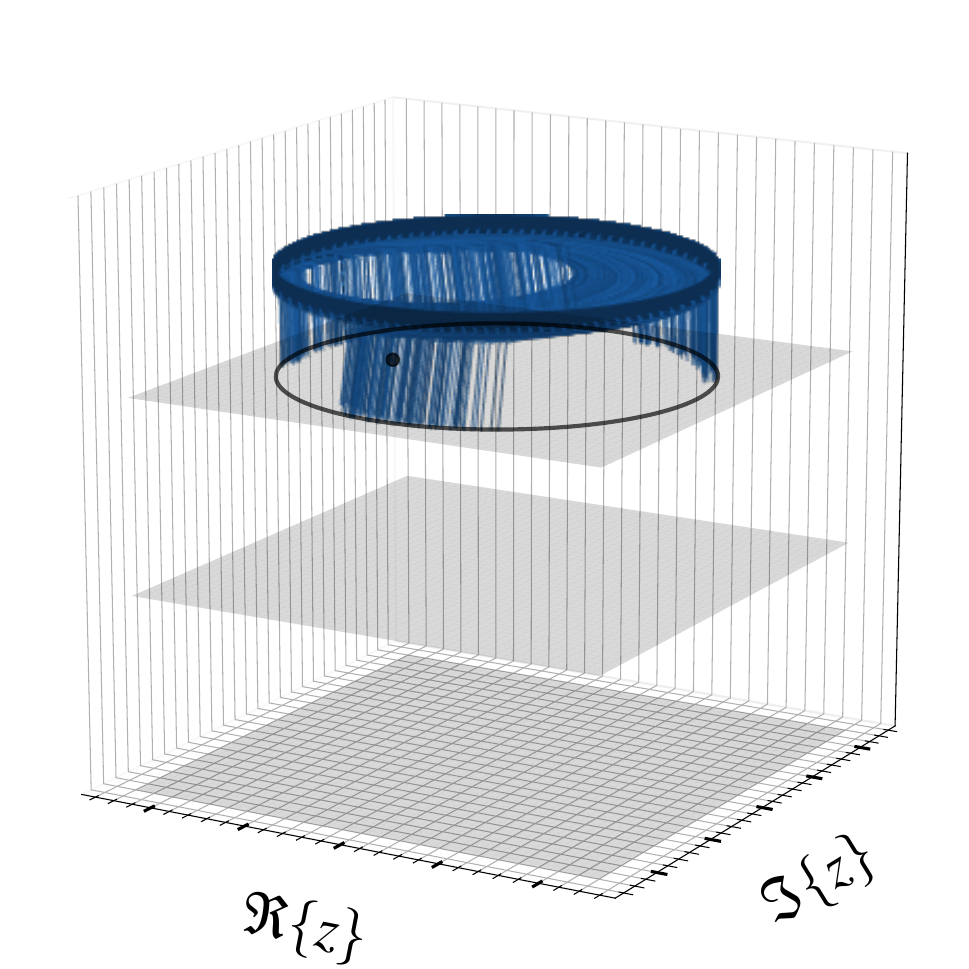}
\caption{$\tau = 0.5$}
\end{subfigure}

\begin{subfigure}{\localcolumnwidth}
\includegraphics[width=\linewidth]{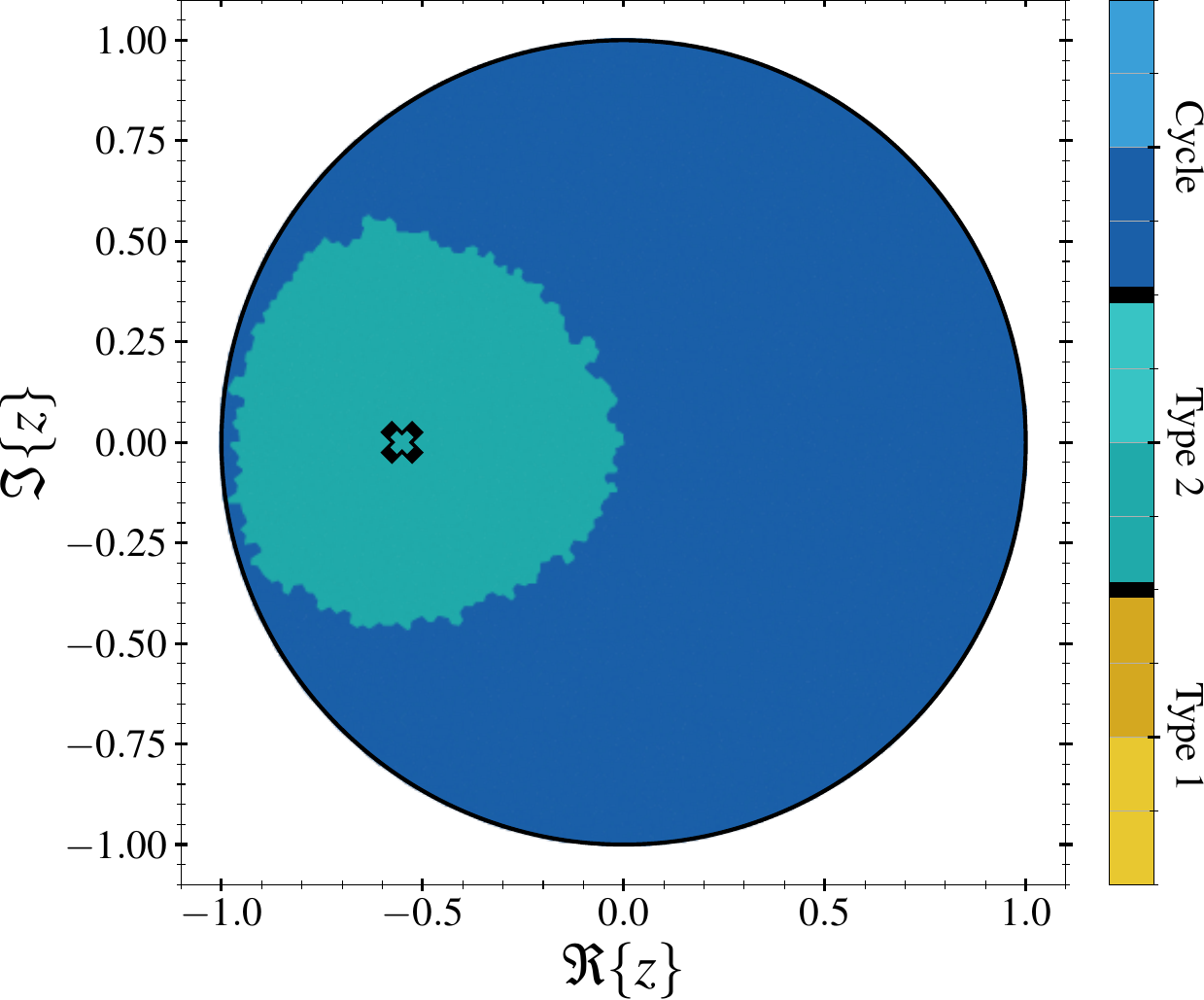}
\includegraphics[width=\linewidth]{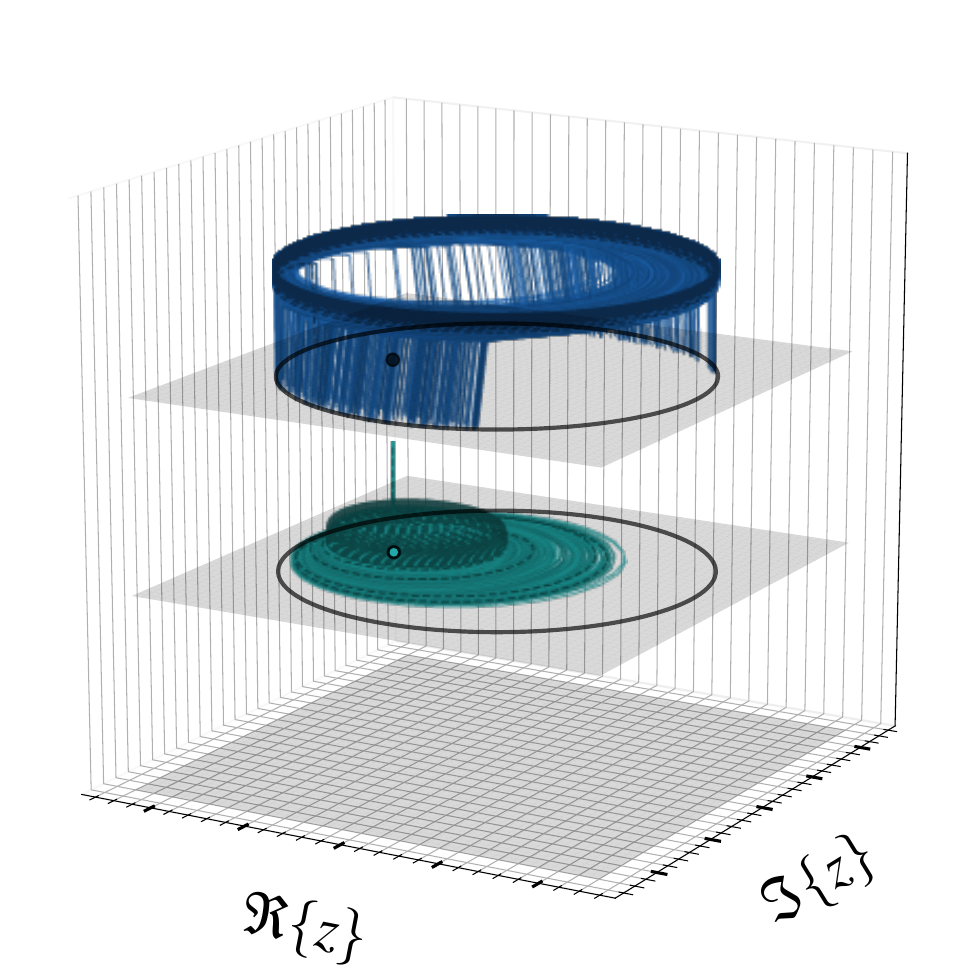}
\caption{$\tau = 1.05$}
\end{subfigure}
\begin{subfigure}{\localcolumnwidth}
\includegraphics[width=\linewidth]{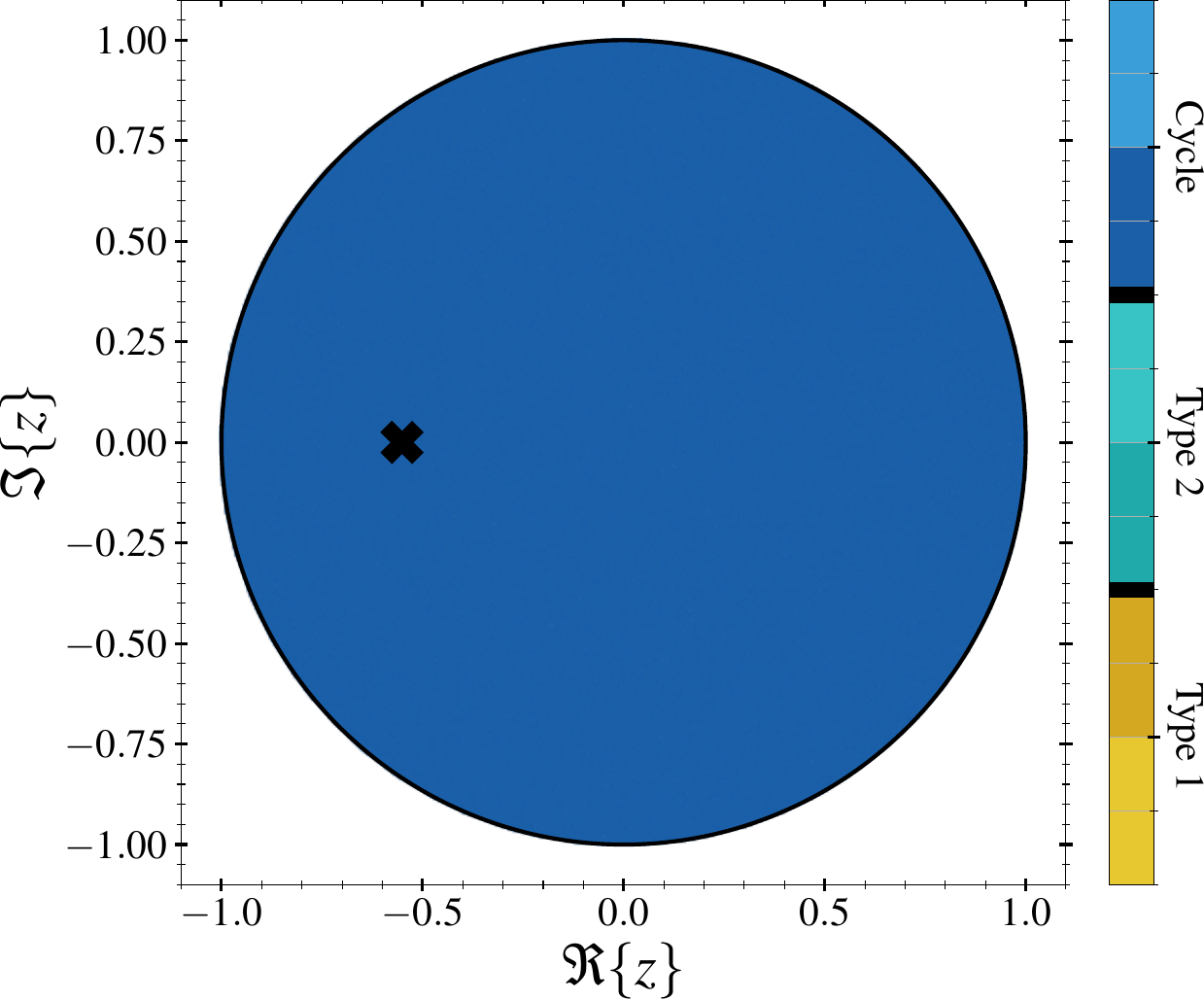}
\includegraphics[width=\linewidth]{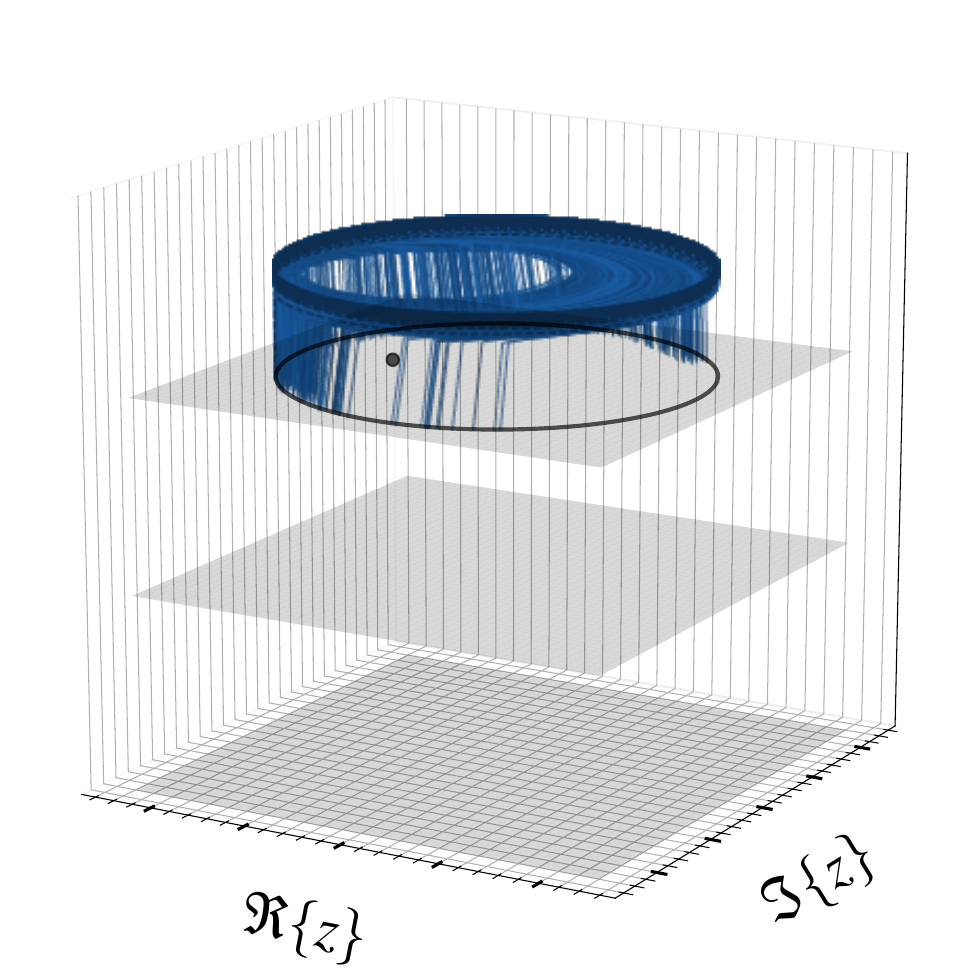}
\caption{$\tau = 1.4$}
\end{subfigure}
\begin{subfigure}{\localcolumnwidth}
\includegraphics[width=\linewidth]{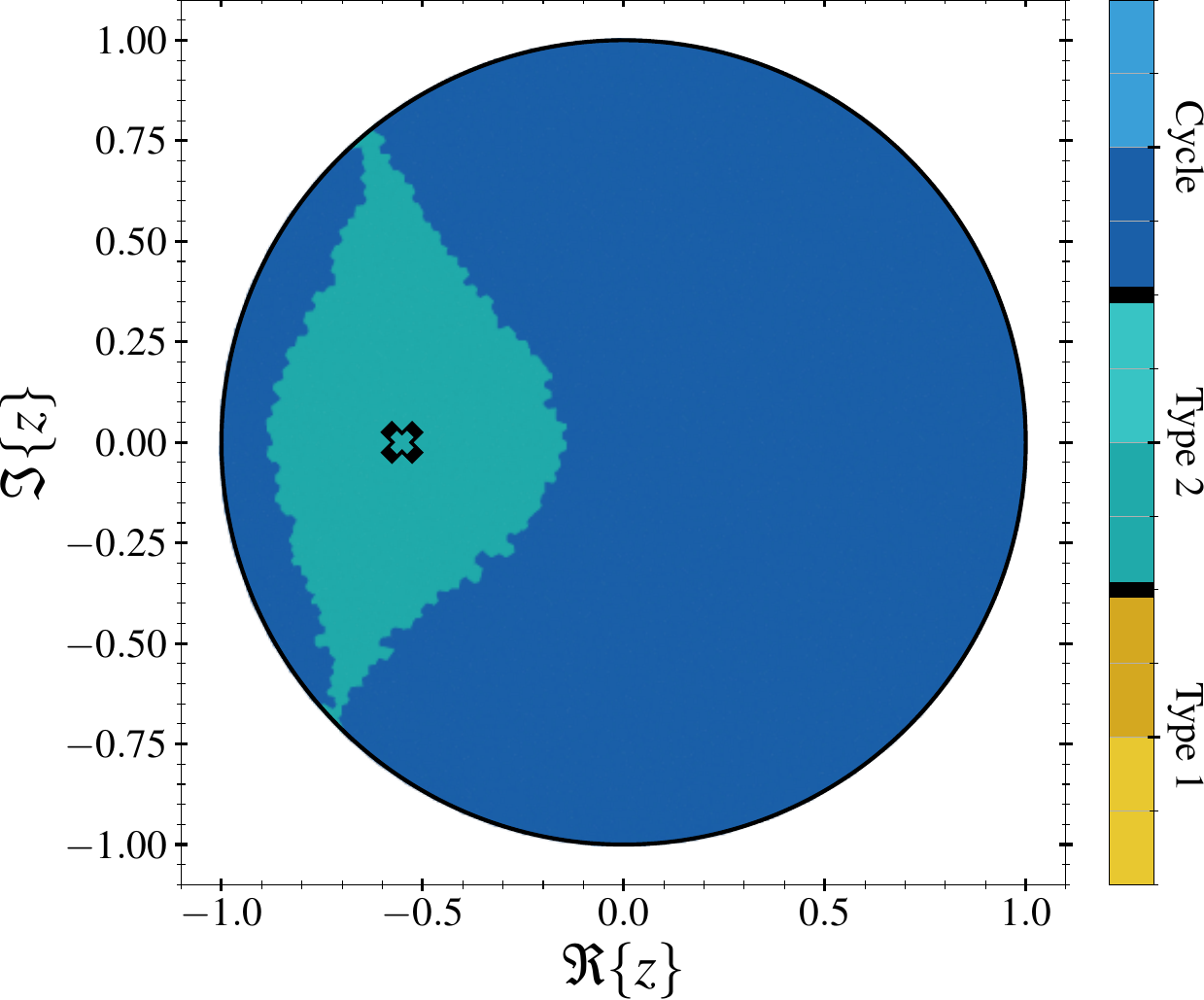}
\includegraphics[width=\linewidth]{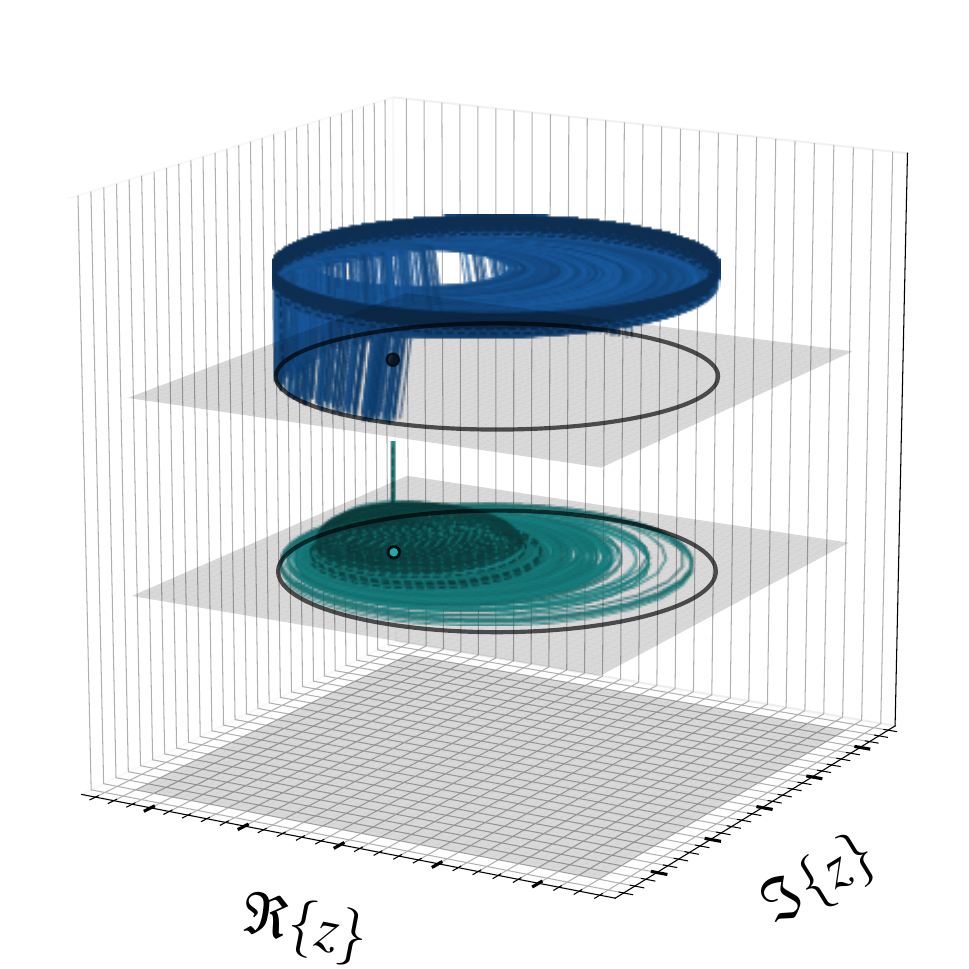}
\caption{$\tau = 2.05$}
\end{subfigure}
\caption{Set 1: (top) Regions of attraction and (bottom) trajectories for $(\kappa,
\eta) = (4.0, 0.92)$. The trajectories are shown for all types of behavior of interest:
(top) limit cycles; (middle) type 2 equilibria; and (bottom) type 1 equilibria.}
\label{fig:num.set1}
\end{figure}

For $(\kappa,\eta)=(4.0,0.92)$, no type 1 equilibria are present and the delay-free
system has a unique type 2 equilibrium $z_c^\star$, which is a center. Since $\kappa>0$,
\Cref{prop:type2.small.delay} implies that $z_c^\star$ becomes asymptotically stable for
sufficiently small delays. The first four Hopf thresholds for the discrete
time delay are listed in \Cref{tab:representative.parameters}. By the transversality
formula \eqref{eq:type2.transversality}, the crossings with odd index are
destabilizing, whereas those with even index are stabilizing.

The simulations in \Cref{fig:num.set1} illustrate this alternating sequence of stability
loss and recovery of the type 2 equilibrium. For $0<\tau<\tau_1^{(2)}$, the equilibrium
$z_c^\star$, which is neutrally stable at $\tau=0$, is stabilized by the delay. After
the first Hopf threshold $\tau_1^{(2)}$, this equilibrium loses stability, and the
simulations show the appearance of an attracting periodic orbit. At $\tau_2^{(2)}$, the
equilibrium regains stability and the corresponding basin plots indicate that the stable
equilibrium coexists with an attracting periodic orbit. The next threshold,
$\tau_3^{(2)}$, destabilizes $z_c^\star$ again, while $\tau_4^{(2)}$ produces another
recovery of stability. Therefore, Set 1 provides an example of delay-induced stability
switching along the type 2 branch, together with coexistence of $z_c^\star$ and an
attracting periodic orbit for some of the sampled delay values.

\subsection{Set 2: second quadrant, single type 2 equilibrium}

For $(\kappa,\eta)=(-0.1,0.9)$, the delay-free system has a single type 2 center
$z_c^\star$. Since $\kappa<0$, \Cref{prop:type2.small.delay} implies that this
equilibrium becomes unstable for sufficiently small positive delay. However, the
condition $\kappa<0$ and $\eta+4\kappa>0$ is satisfied, and
\Cref{prop:unit.circle.periodic.orbit} guarantees the existence of a periodic orbit on
the invariant circle $\rho=1$. The first two Hopf thresholds for the discrete delay are
given in \Cref{tab:representative.parameters}. The simulations in \Cref{fig:num.set2}
show that the unit-circle periodic orbit attracts a substantial part of the
constant-history slice when the type 2 equilibrium is unstable, while the type 2
equilibrium becomes attracting between the stabilizing and destabilizing thresholds.

\begin{figure}[ht!]
\centering
\setlength{\localcolumnwidth}{0.24\linewidth}

\begin{subfigure}{\localcolumnwidth}
\includegraphics[width=\linewidth]{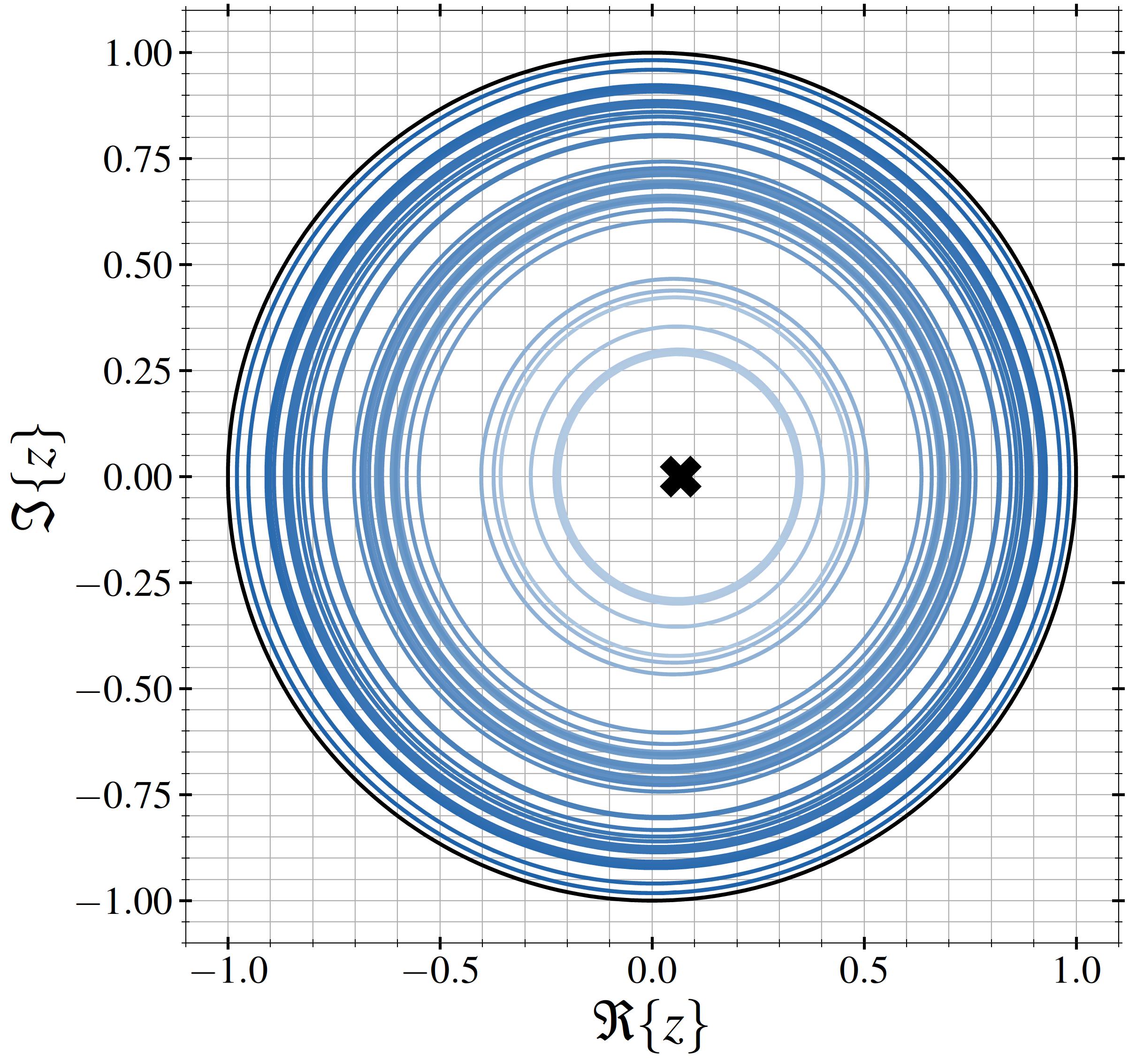}
\caption{$\tau = 0.0$}
\end{subfigure}%
\begin{subfigure}{\localcolumnwidth}
\includegraphics[width=\linewidth]{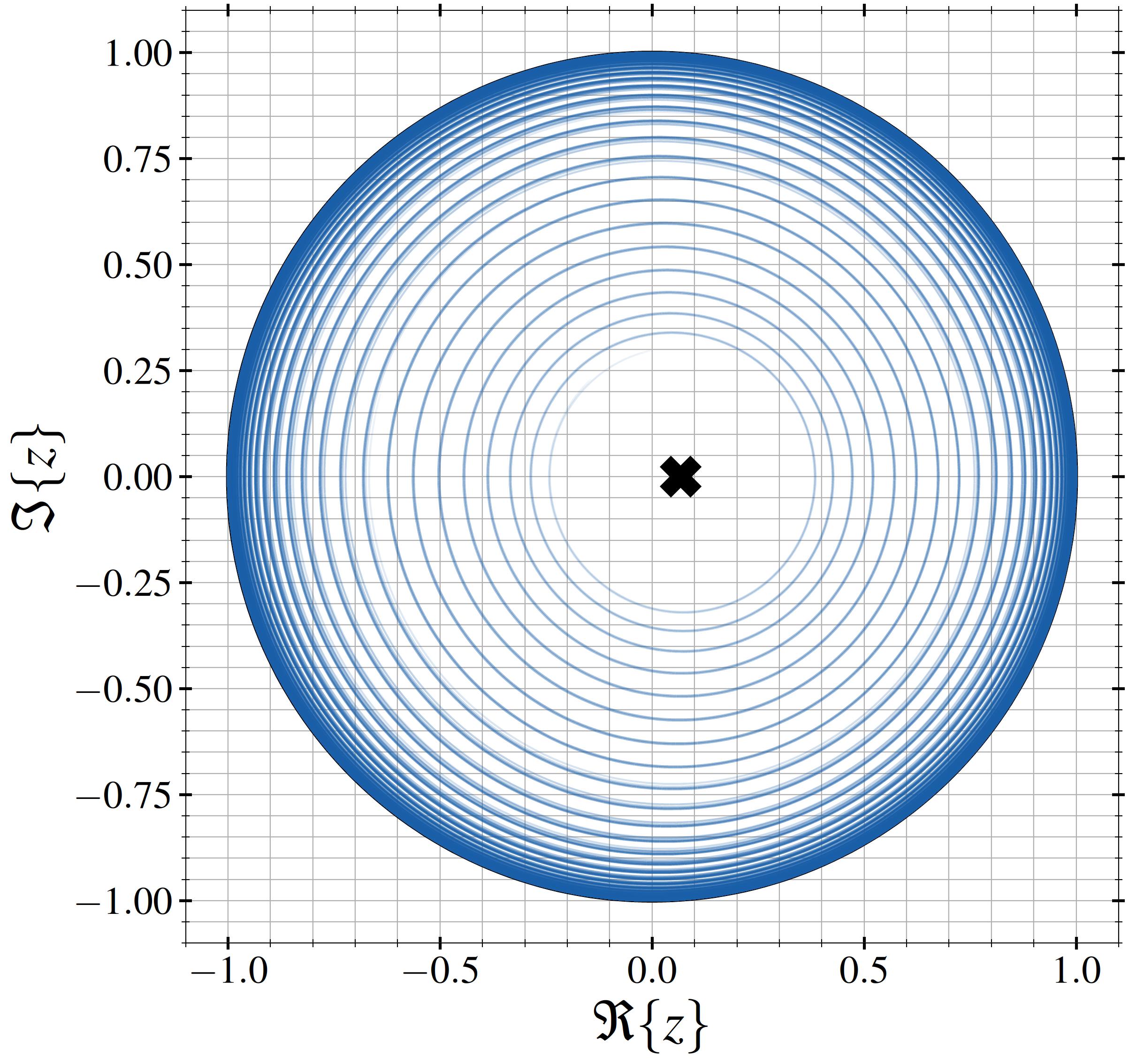}
\caption{$\tau = 0.5$}
\end{subfigure}%
\begin{subfigure}{\localcolumnwidth}
\includegraphics[width=\linewidth]{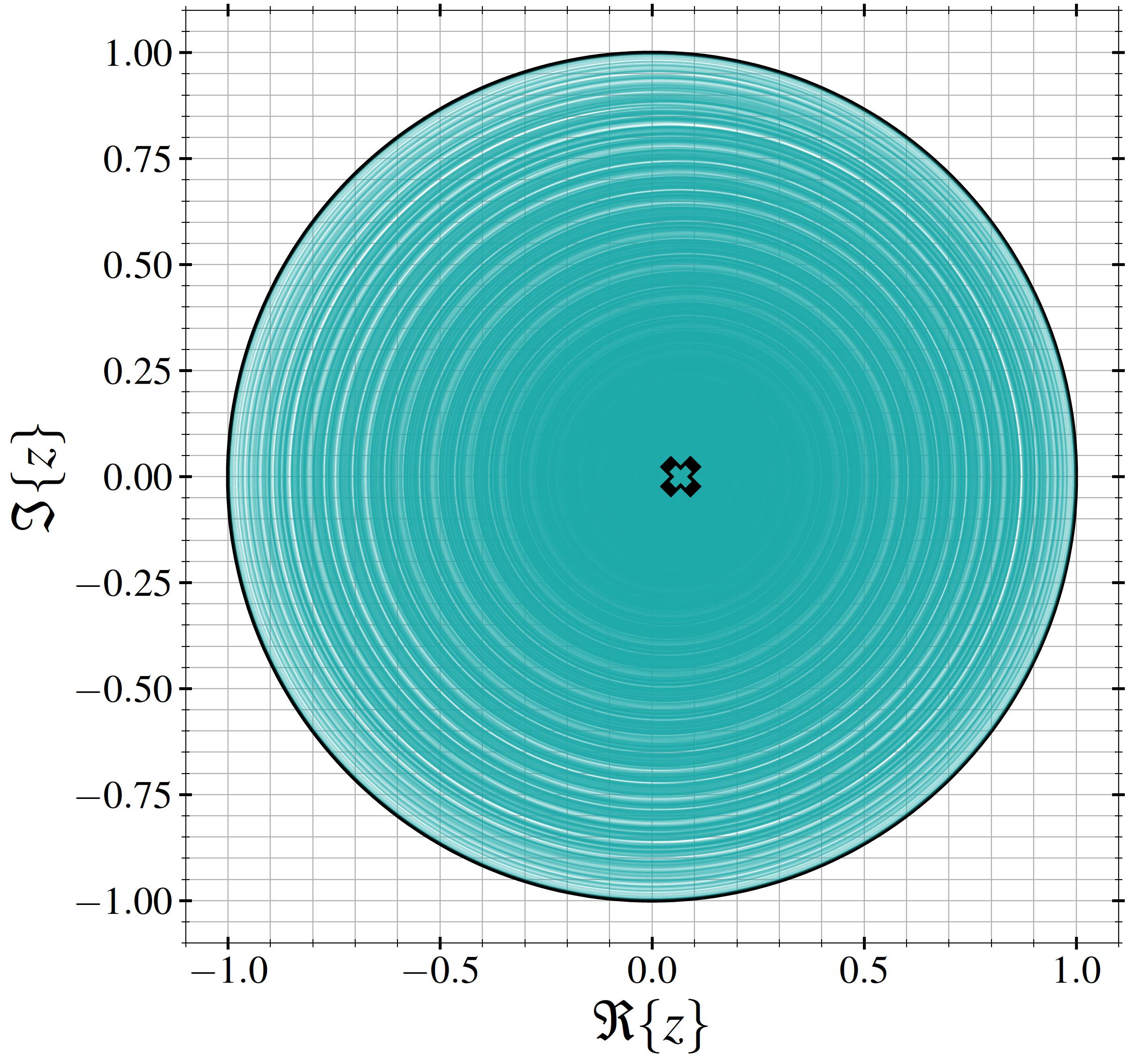}
\caption{$\tau = 2.5$}
\end{subfigure}%
\begin{subfigure}{\localcolumnwidth}
\includegraphics[width=\linewidth]{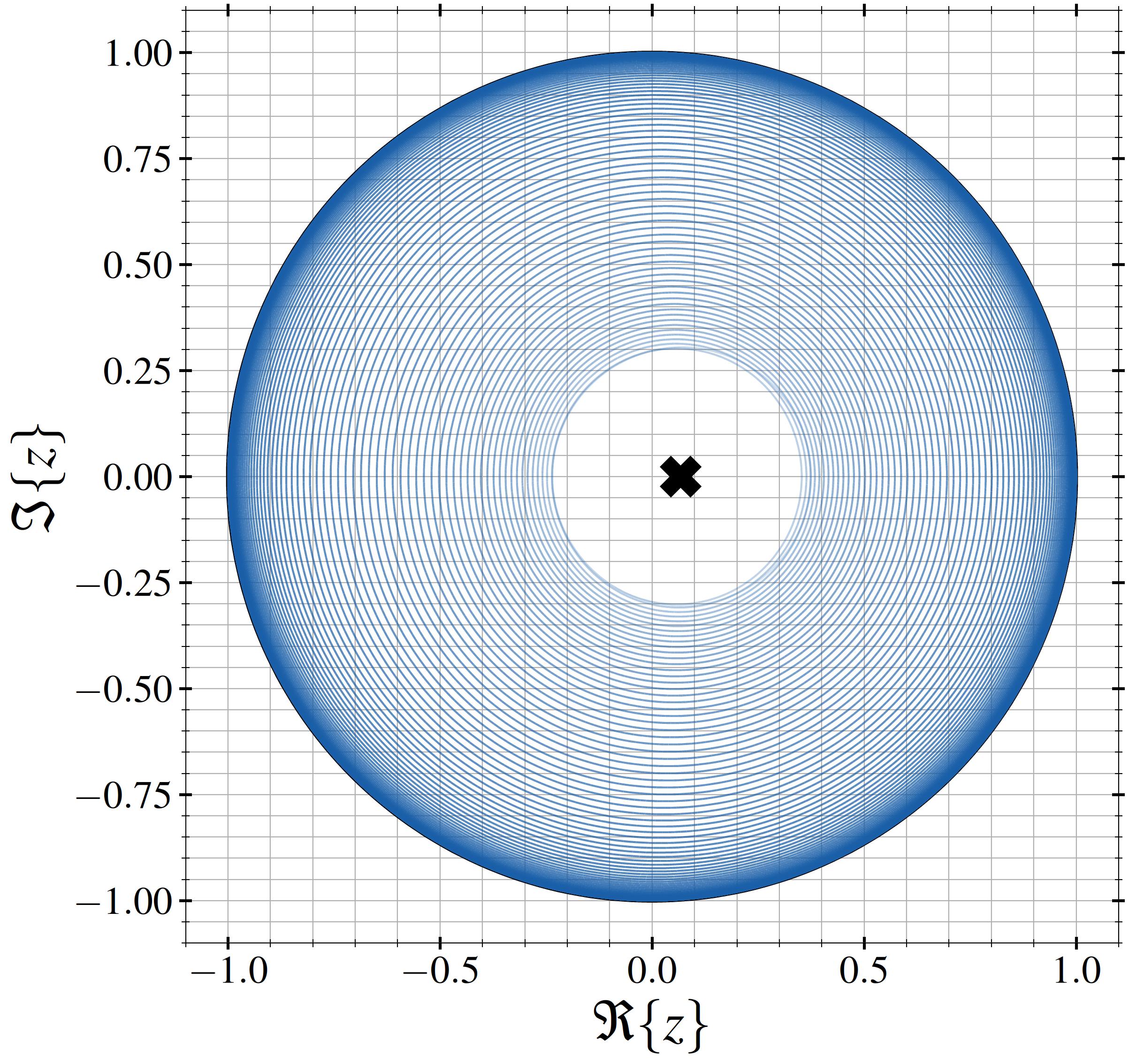}
\caption{$\tau = 3.6$}
\end{subfigure}

\caption{Set 2: Phase portraits for $(\kappa, \eta) = (-0.1, 0.9)$.}
\label{fig:num.set2}
\end{figure}

\subsection{Sets 3.1 and 3.2: third quadrant, two and six type 1 equilibria}

For $(\kappa,\eta)=(-3,-0.5)$ corresponding to Set 3.1, the delay-free system has two
type 1 equilibria, one of which is asymptotically stable, and no type 2 equilibria.
Since $\kappa<0$, \Cref{prop:stab.type1} implies that the stability of the type 1 branch
is independent of the delay. The simulations in \Cref{fig:num.set31} are consistent with
this prediction: increasing $\tau$ does not produce a qualitative change in the observed
attractor.

On the other hand, for $(\kappa,\eta)=(-1.3,-0.025)$ corresponding to Set 3.2, there are
six type 1 equilibria, exactly two of them (on the lower semi-circle) being
asymptotically stable, regardless of the considered delay. However, the basin plots in
\Cref{fig:num.set32} show that the two stable equilibria do not attract comparable
portions of the constant-history slice, as the stability region associated with one of
them is substantially smaller.

These examples serve as reference cases in which the introduction of a discrete delay has little effect on the global portrait.

\begin{figure}[ht!]
\centering

\begin{subfigure}[t]{0.49\linewidth}
\centering

\begin{minipage}[t]{0.48\linewidth}
\centering
\includegraphics[width=\linewidth]{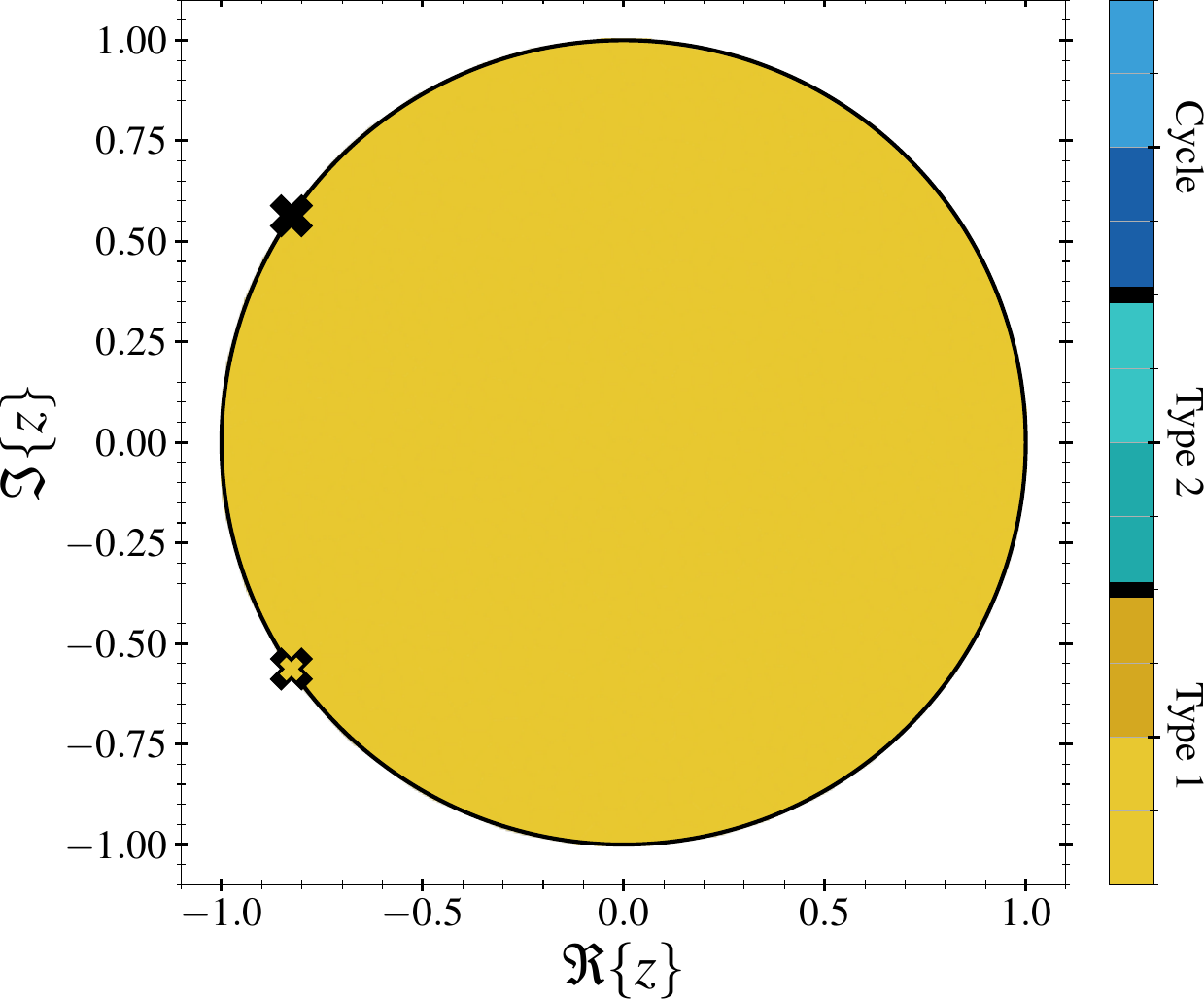}
\includegraphics[width=\linewidth]{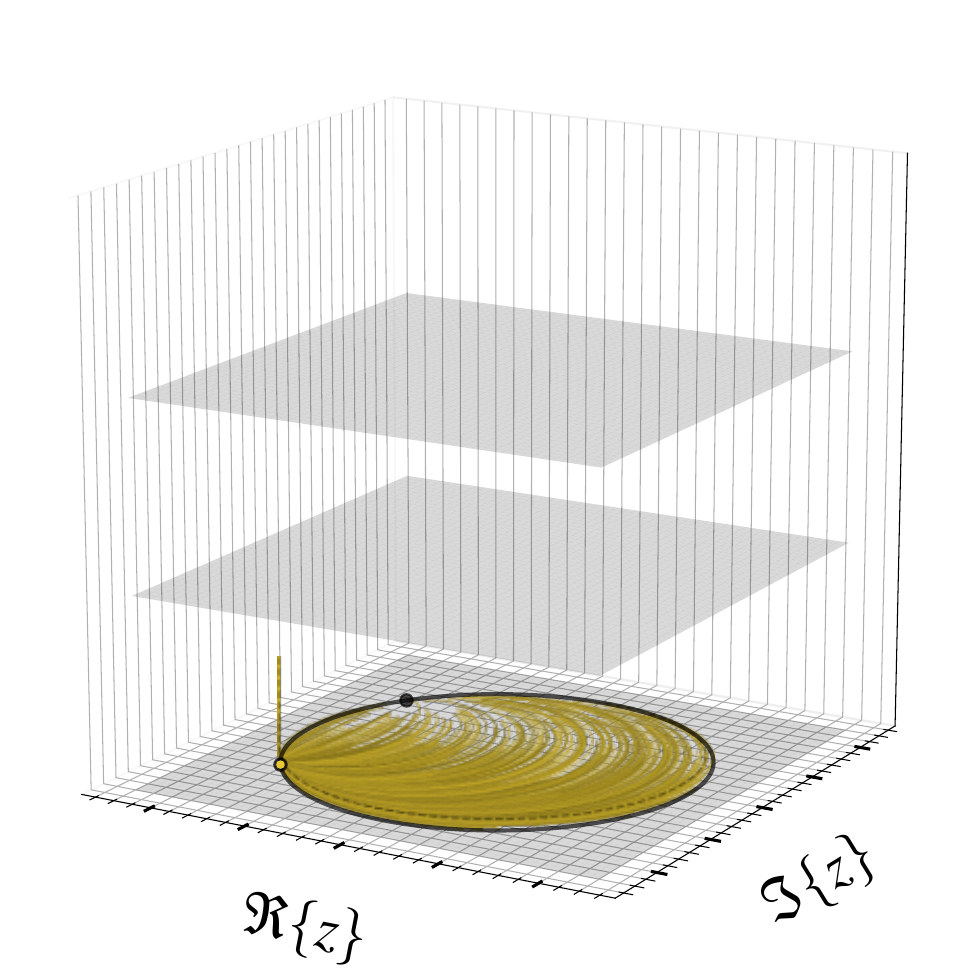}
\par\smallskip
{\small $\tau=0$}
\end{minipage}\hfill
\begin{minipage}[t]{0.48\linewidth}
\centering
\includegraphics[width=\linewidth]{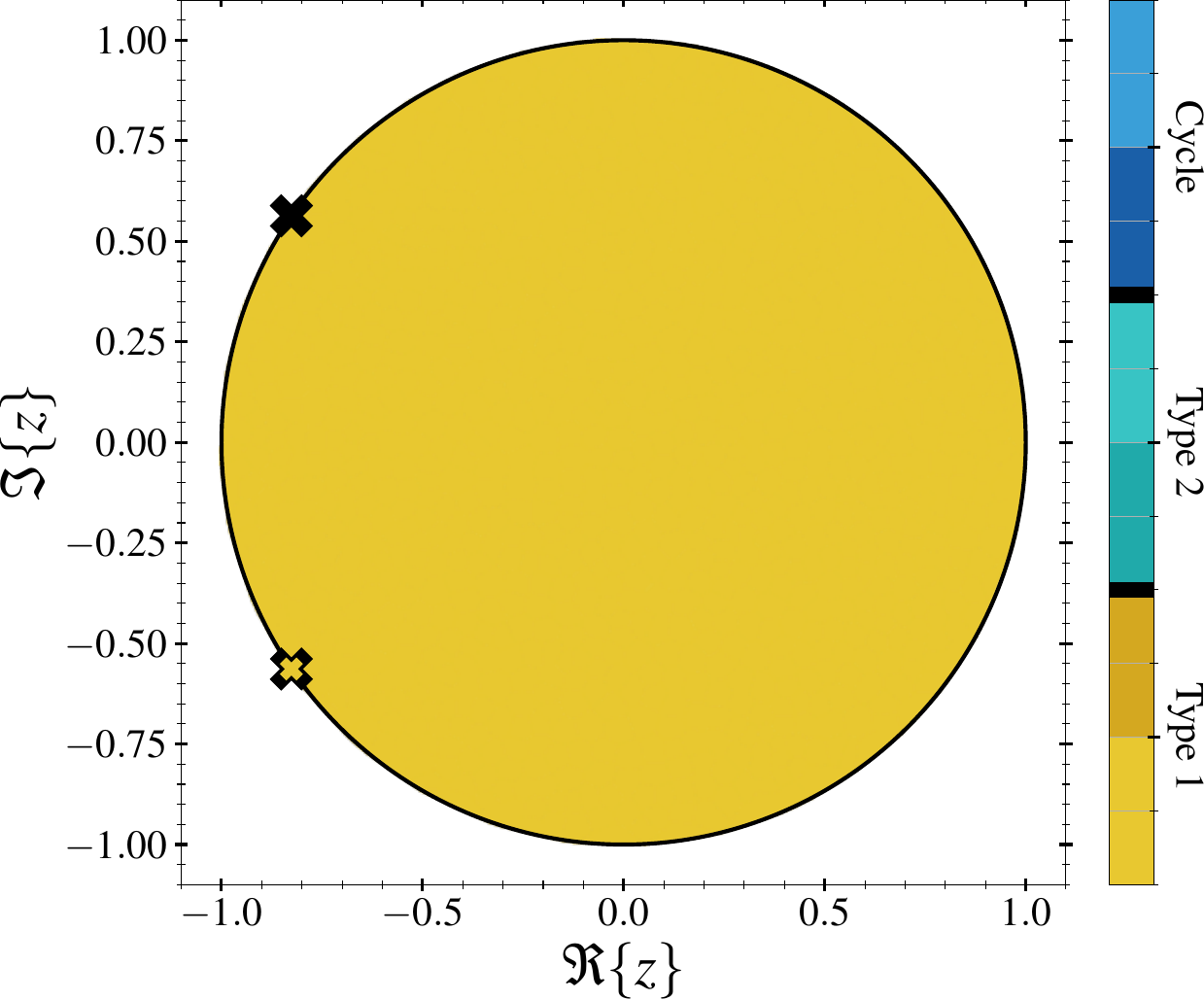}
\includegraphics[width=\linewidth]{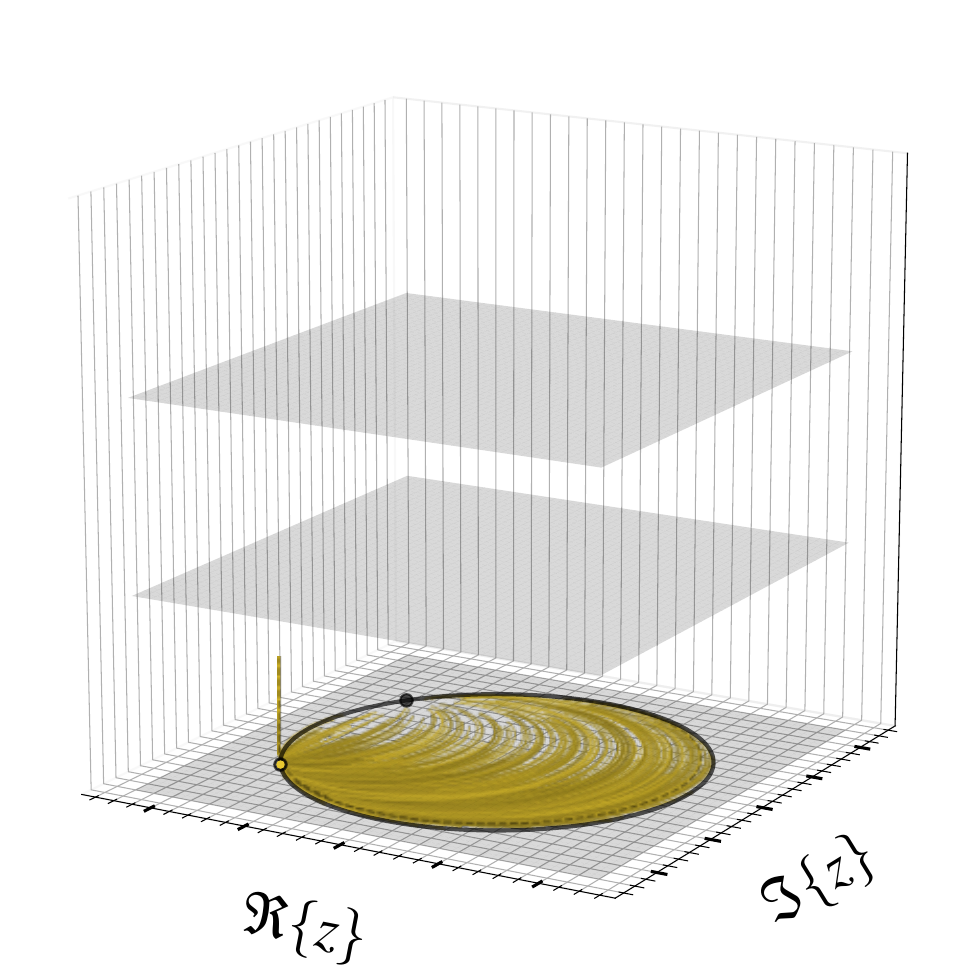}
\par\smallskip
{\small $\tau=1$}
\end{minipage}

\caption{Set~3.1: $(\kappa,\eta)=(-3,-0.5)$.}
\label{fig:num.set31}
\end{subfigure}
\hfill
\begin{subfigure}[t]{0.49\linewidth}
\centering

\begin{minipage}[t]{0.48\linewidth}
\centering
\includegraphics[width=\linewidth]{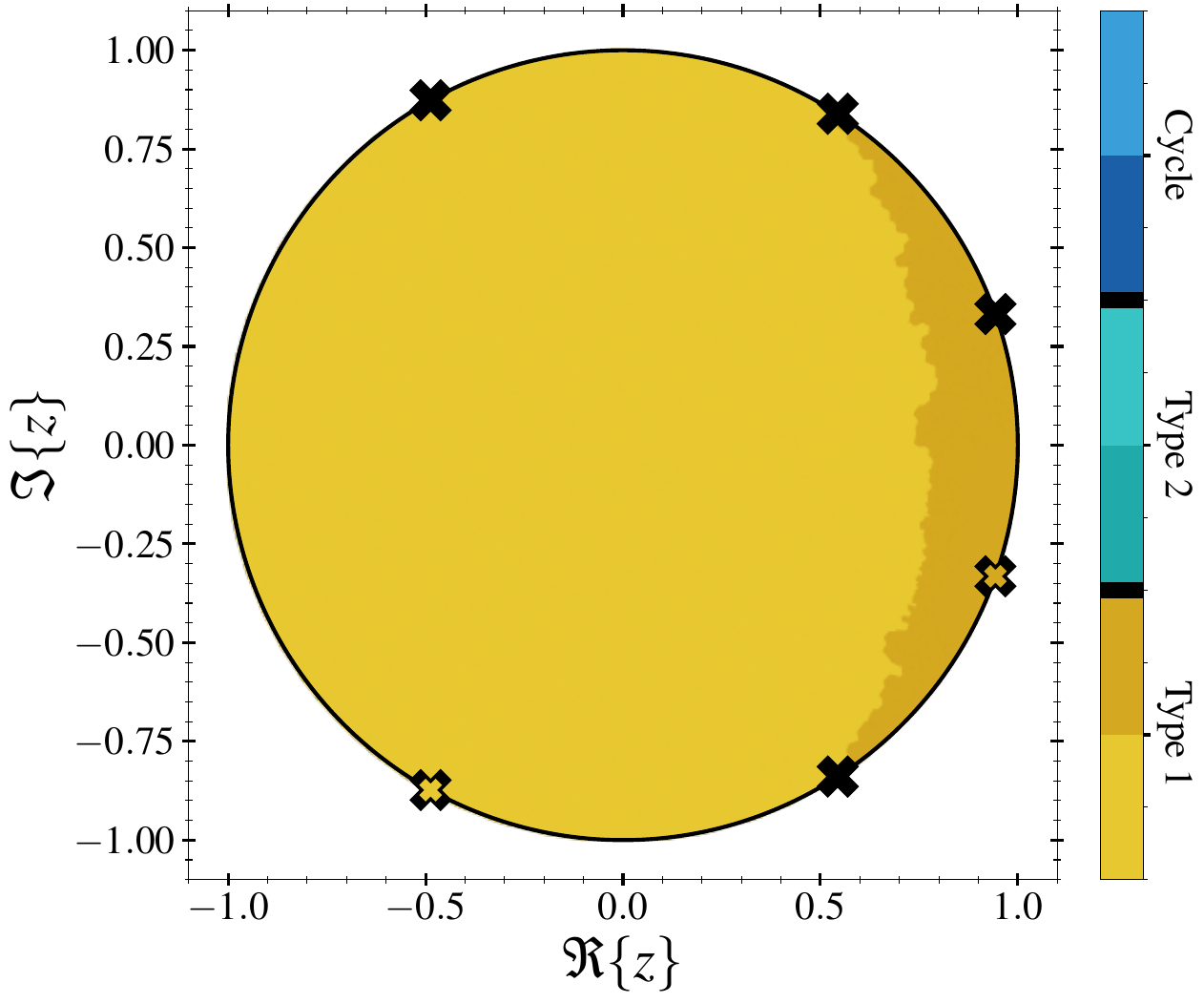}
\includegraphics[width=\linewidth]{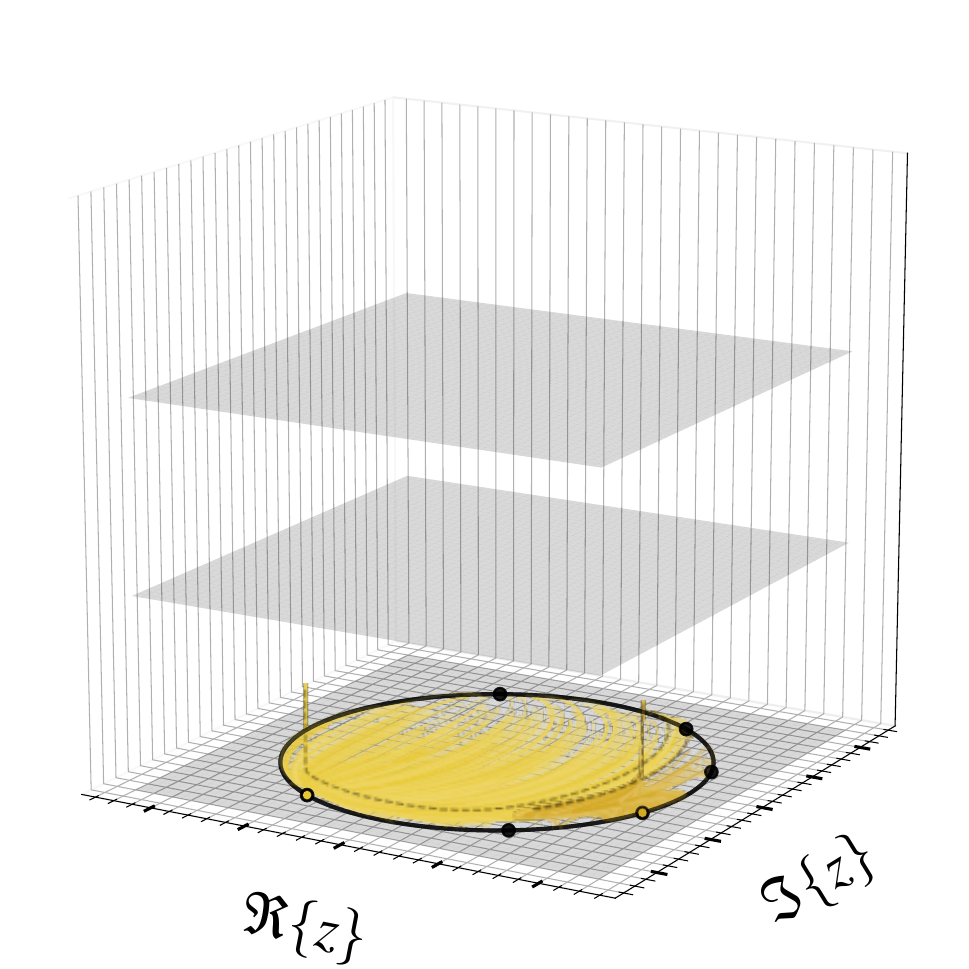}
\par\smallskip
{\small $\tau=0$}
\end{minipage}\hfill
\begin{minipage}[t]{0.48\linewidth}
\centering
\includegraphics[width=\linewidth]{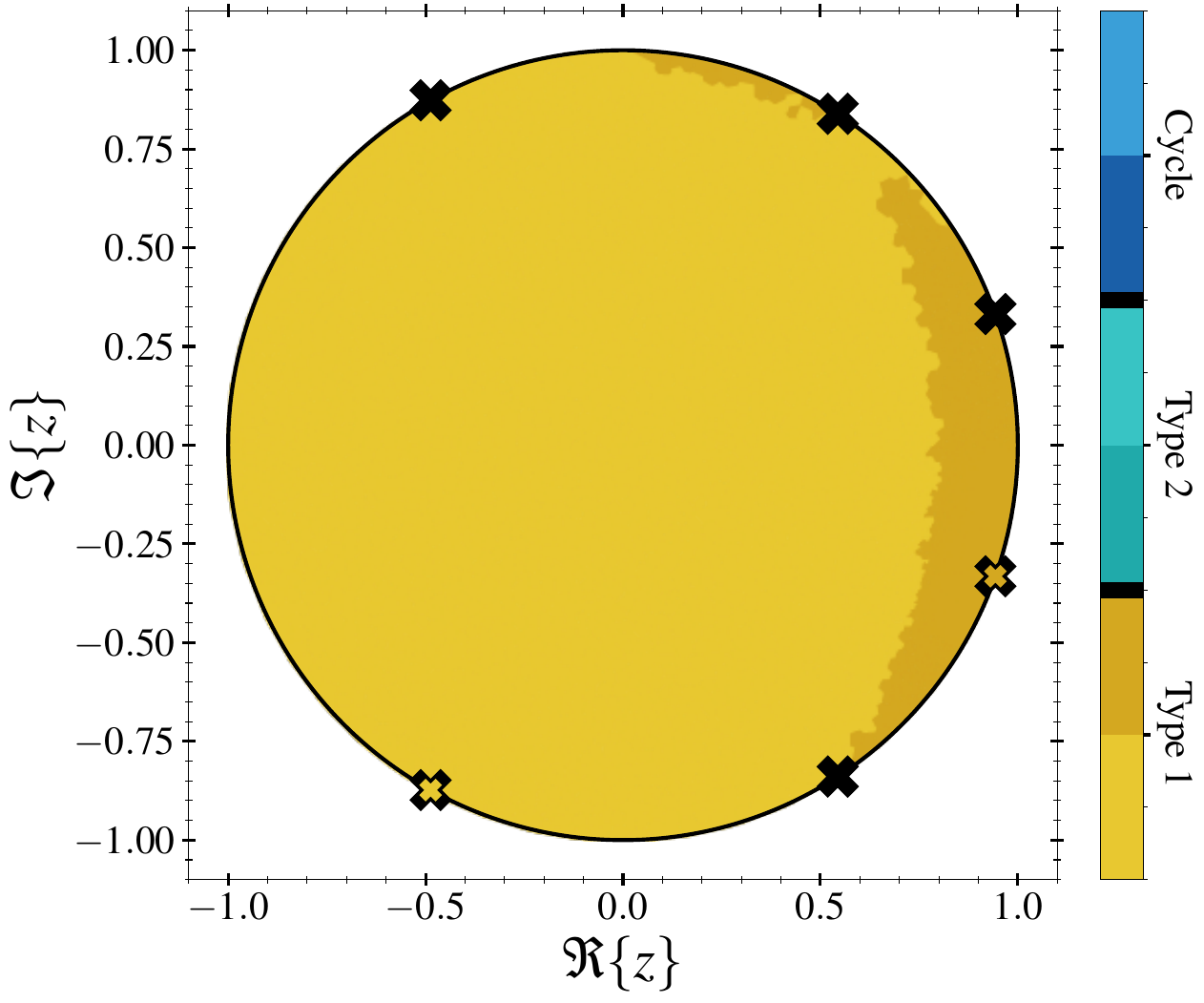}
\includegraphics[width=\linewidth]{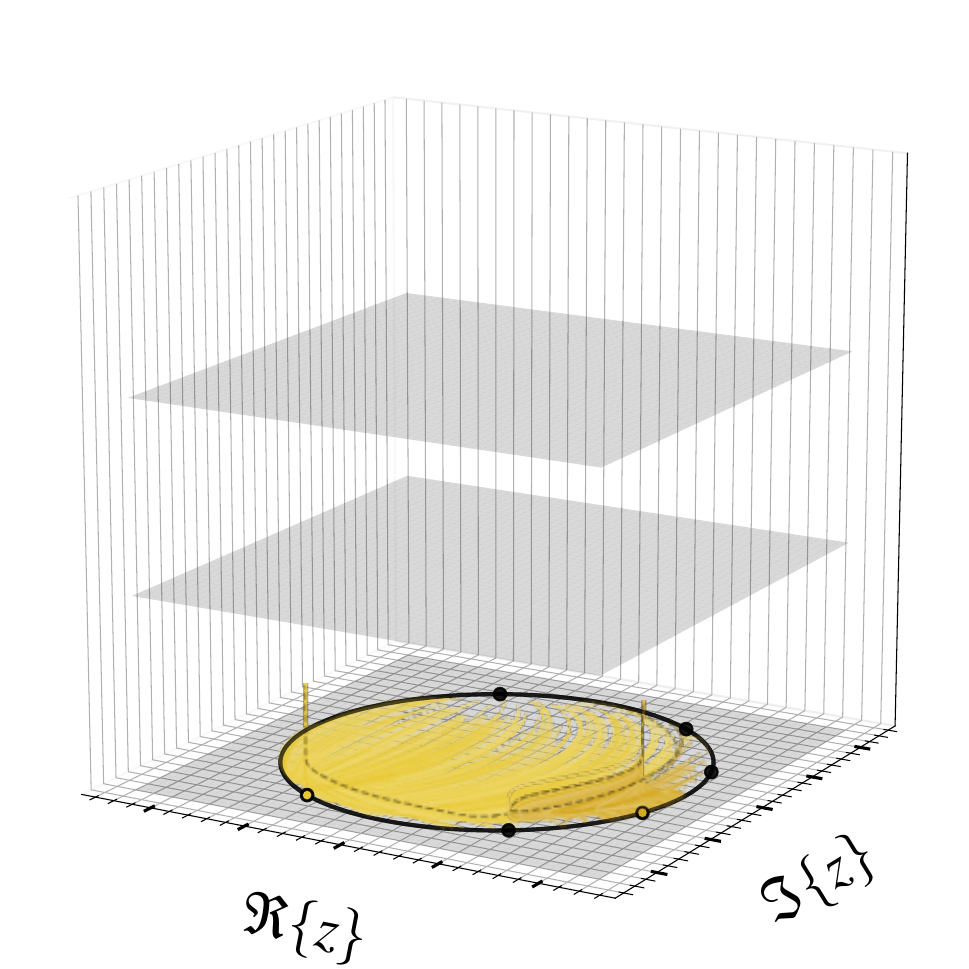}
\par\smallskip
{\small $\tau=1$}
\end{minipage}

\caption{Set~3.2: $(\kappa,\eta) = (-1.3, -0.025)$.}
\label{fig:num.set32}
\end{subfigure}

\caption{Regions of attraction and representative trajectories for the two
third-quadrant parameter sets. In each panel, the upper image shows the regions of
attraction and the lower image shows representative trajectories.}
\label{fig:num.set3}
\end{figure}

\subsection{Set 4: fourth quadrant, interaction of type 1 and type 2 mechanisms}

For $(\kappa,\eta)=(3,-0.4)$, the delay-free system has an asymptotically stable type 1
equilibrium $z_a^\star$ on the lower semicircle, an unstable type 1 equilibrium on the
upper semicircle, and two type 2 equilibria, namely a center $z_c^\star$ and a saddle.
For a small positive delay, the type 2 center $z_c^\star$ becomes asymptotically stable
because $\kappa>0$ (see \Cref{prop:type2.small.delay}), while the type 1 equilibrium
$z_a^\star$ remains asymptotically stable until its first type 1 Hopf threshold. Hence,
two asymptotically stable equilibria coexist.

The relevant critical delays are given in \Cref{tab:representative.parameters}. The
selected delay values first cross the type 2 destabilizing Hopf threshold
$\tau_1^{(2)}$, then the type 1 Hopf threshold $\tau_0^{(1)}$, and then the subsequent
type 2 restabilization/destabilization thresholds $\tau_2^{(2)}$ and $\tau_3^{(2)}$.
\Cref{fig:num.set4} shows the corresponding changes in the observed attractors. More
precisely, for small values of the delay, the equilibrium $z_c^\star$ becomes
asymptotically stable and it coexists with the asymptotically stable type 1 equilibrium
$z_a^\star$. At the critical value $\tau_1^{(2)}$, the stability of $z_c^\star$ is lost
by a Hopf bifurcation, and a stable cycle appears. At the Hopf threshold $\tau_0^{(1)}$,
the type 1 equilibrium $z_a^\star$ also loses stability, and the coexistence of two
stable cycles is observed. However, at $\tau_2^{(2)}$, the equilibrium $z_c^\star$
regains stability, and coexists with two attracting cycles. Finally, at $\tau_3^{(2)}$,
the equilibrium $z_c^\star$ loses stability once more through a Hopf bifurcation,
and two coexisting stable cycles are observed in the system.

\begin{figure}[ht!]
\centering
\setlength{\localcolumnwidth}{0.2\linewidth}

\begin{subfigure}{\localcolumnwidth}
\includegraphics[width=\linewidth]{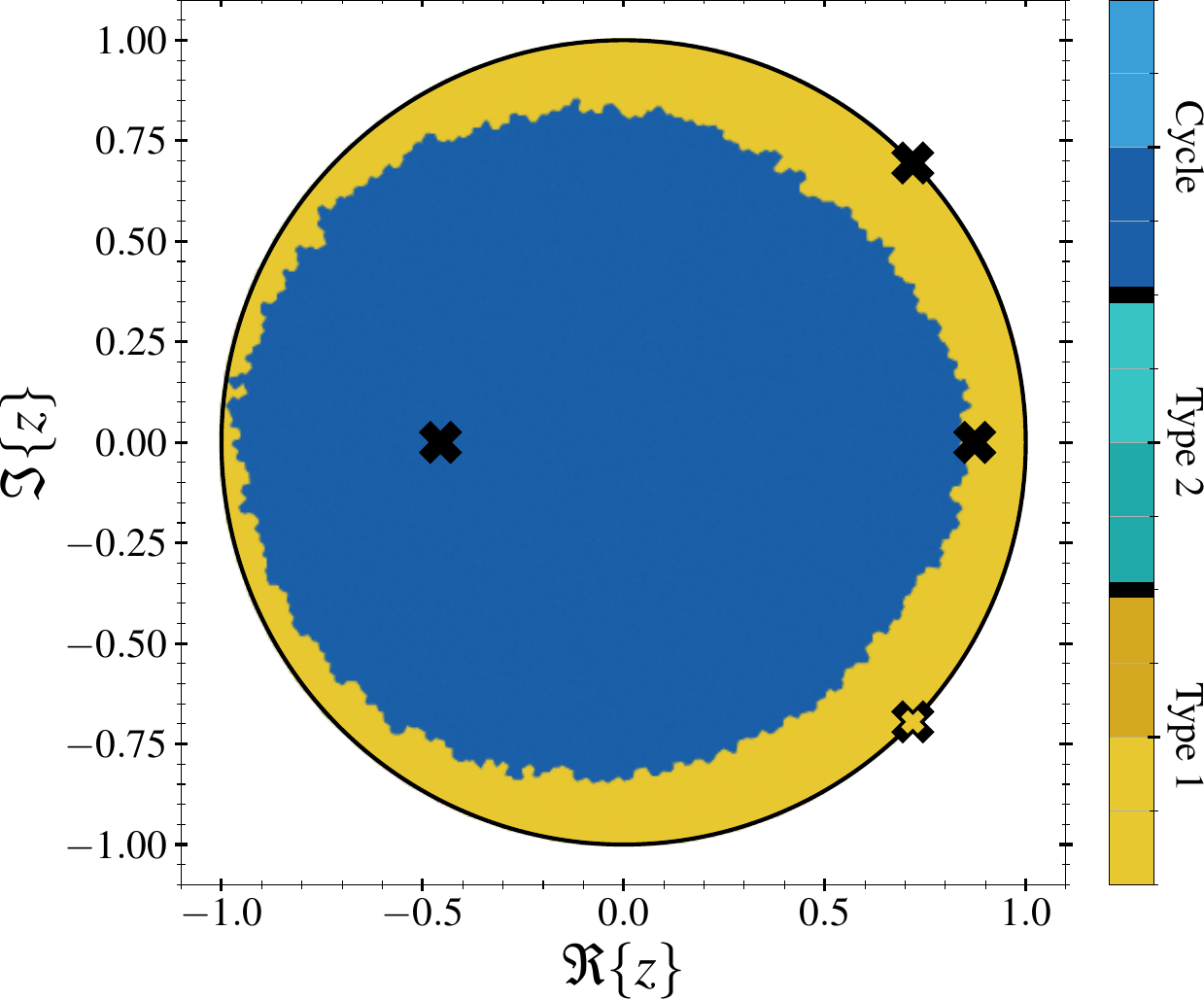}
\includegraphics[width=\linewidth]{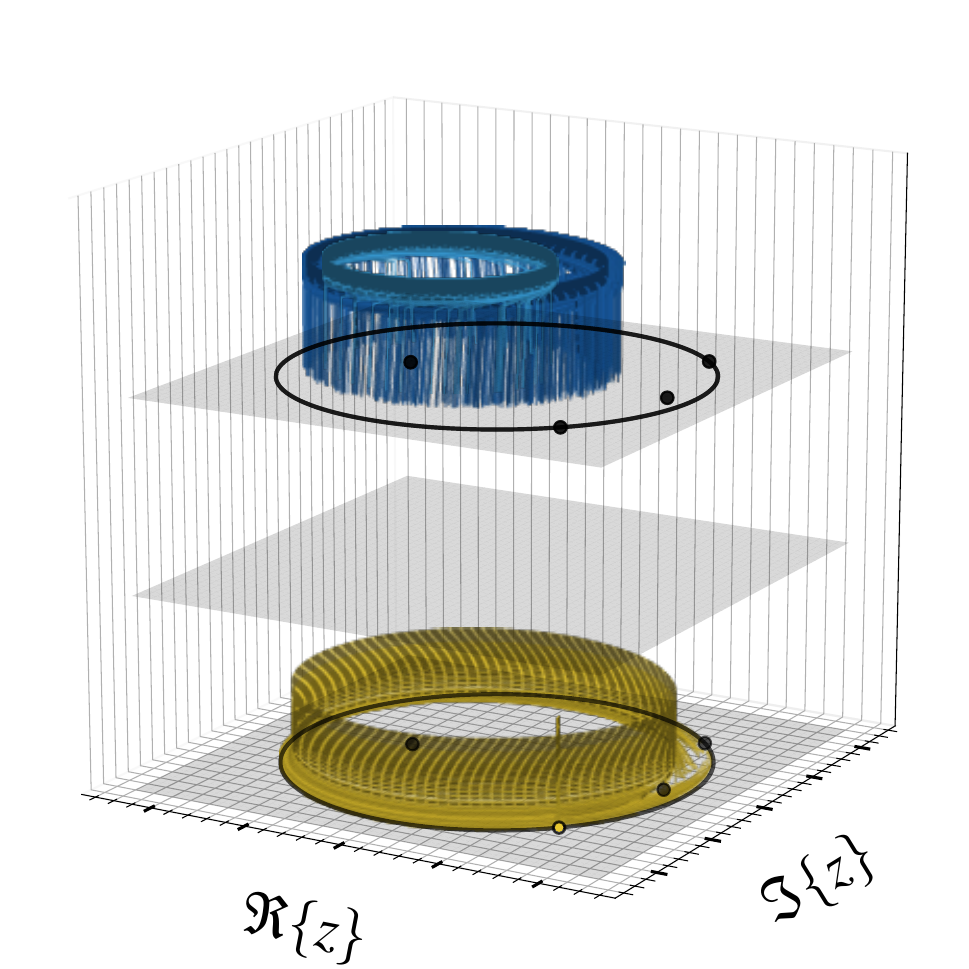}
\caption{$\tau = 0$}
\end{subfigure}
\begin{subfigure}{\localcolumnwidth}
\includegraphics[width=\linewidth]{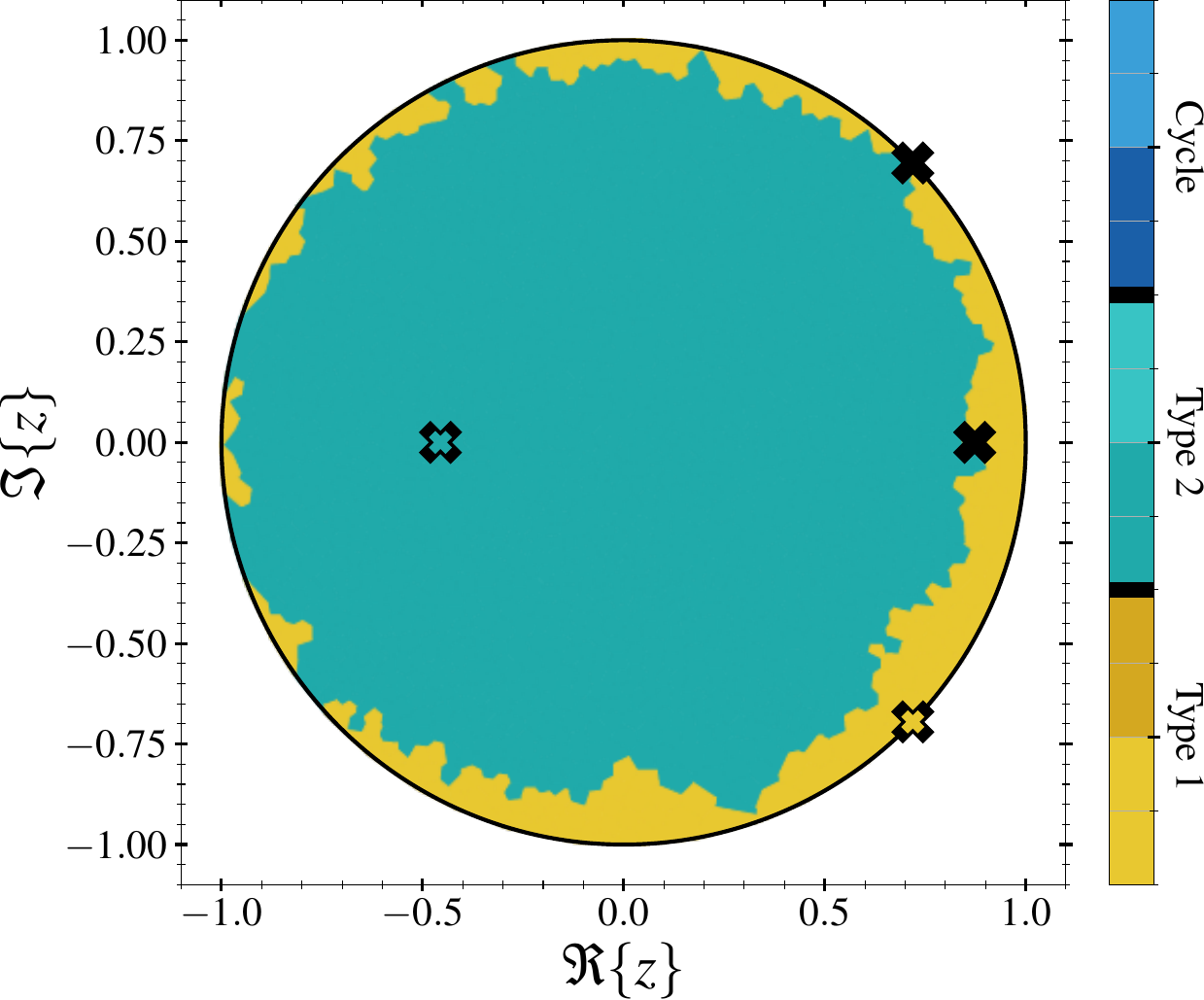}
\includegraphics[width=\linewidth]{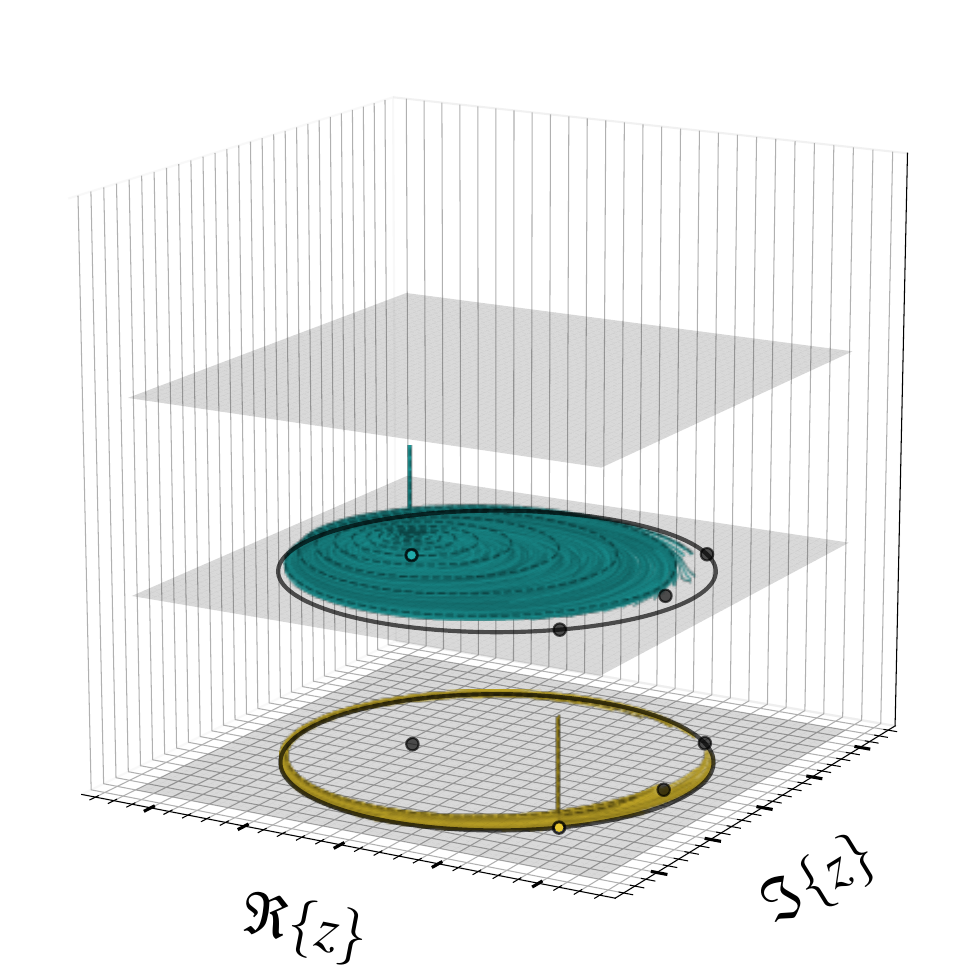}
\caption{$\tau = 0.1$}
\end{subfigure}
\begin{subfigure}{\localcolumnwidth}
\includegraphics[width=\linewidth]{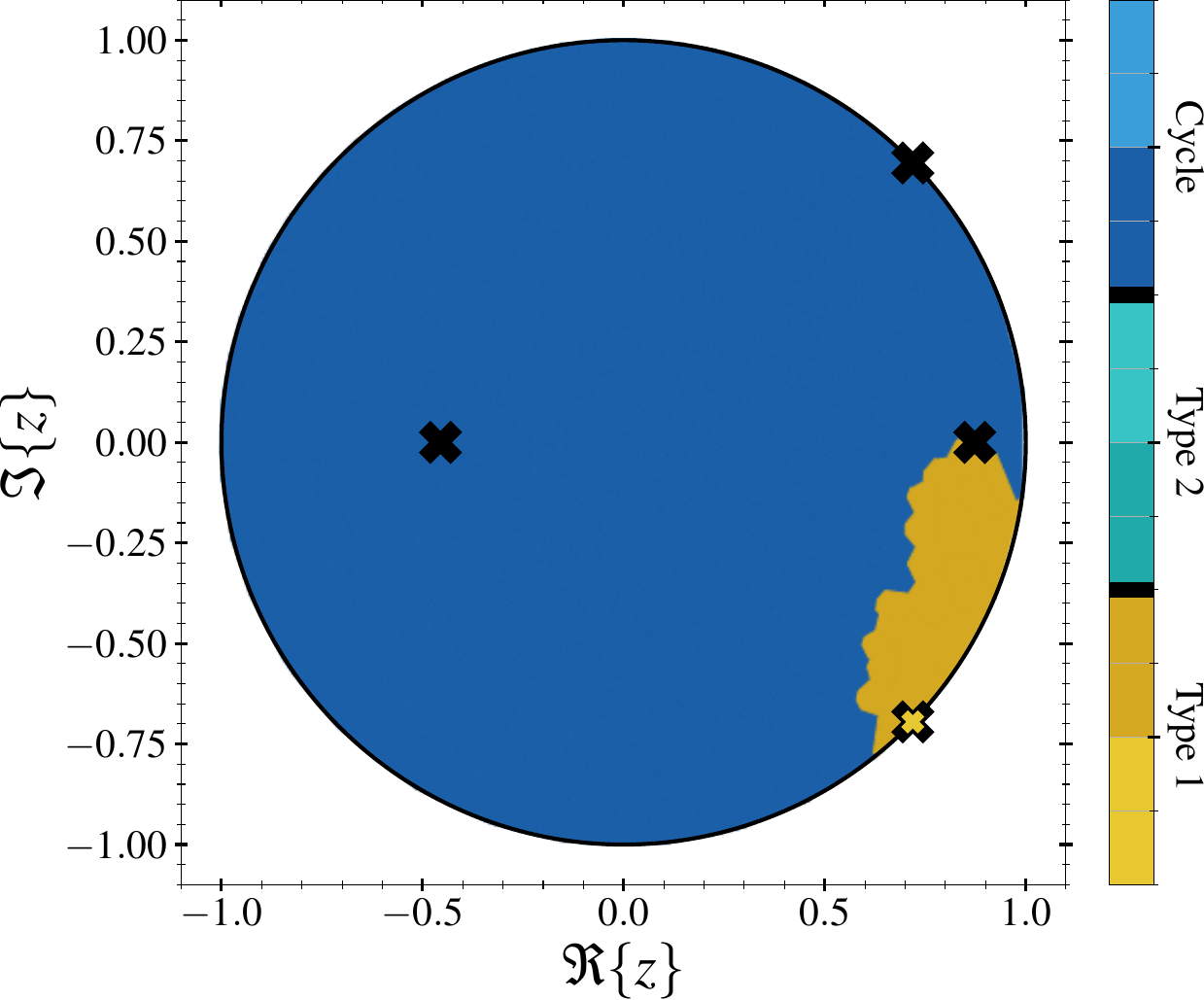}
\includegraphics[width=\linewidth]{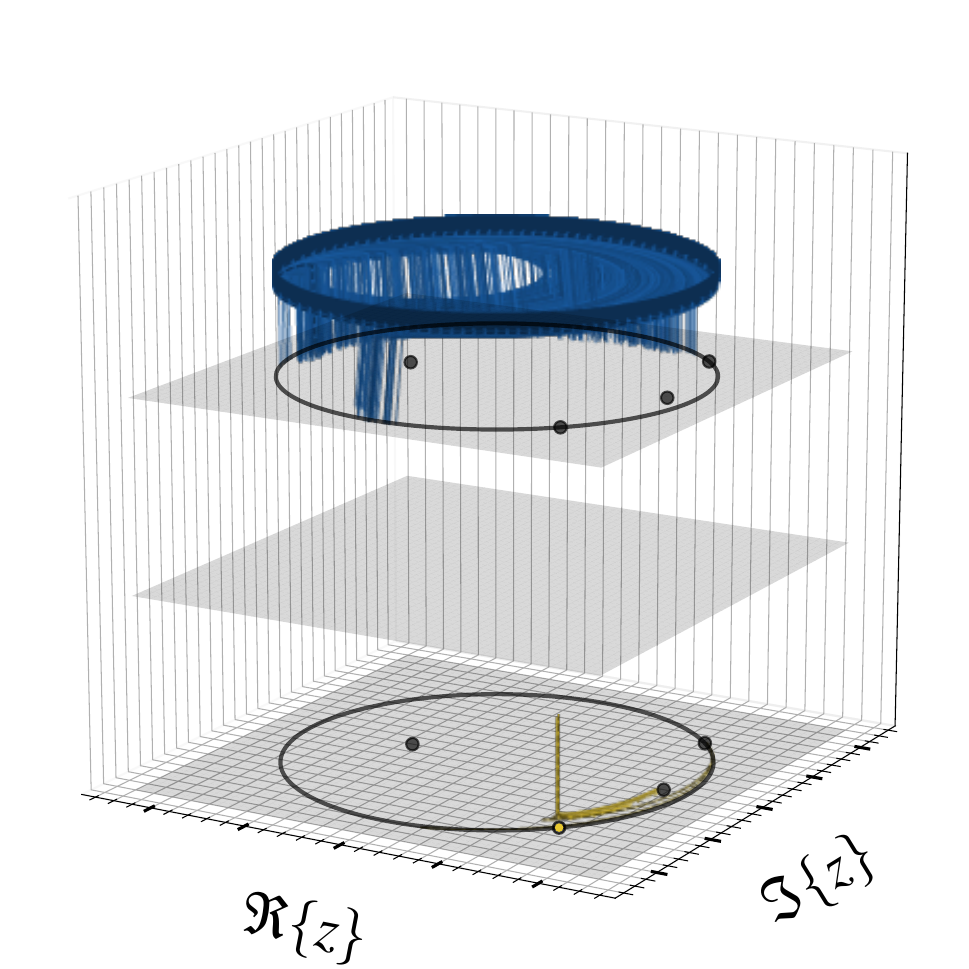}
\caption{$\tau = 0.6$}
\end{subfigure}
\begin{subfigure}{\localcolumnwidth}
\includegraphics[width=\linewidth]{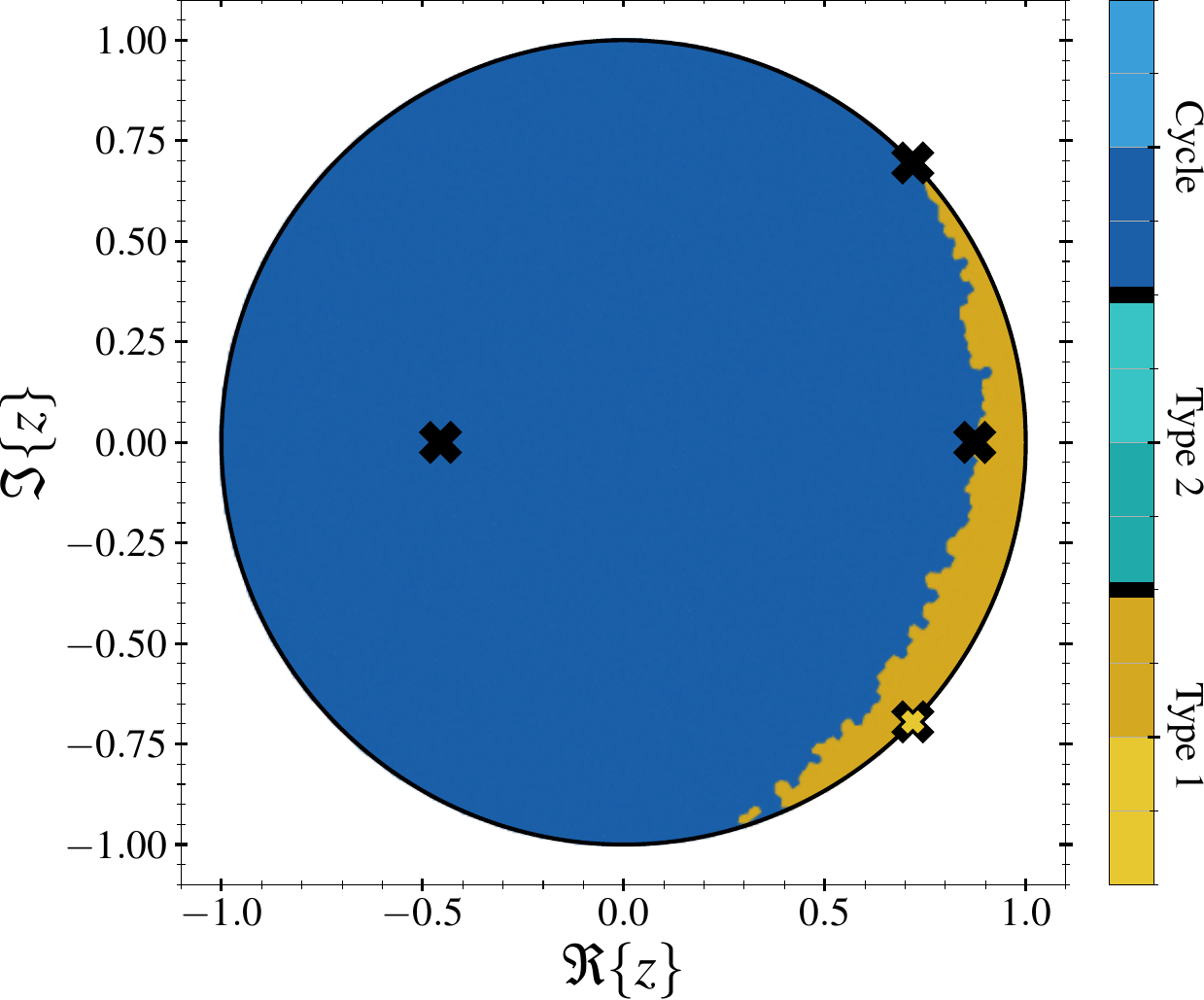}
\includegraphics[width=\linewidth]{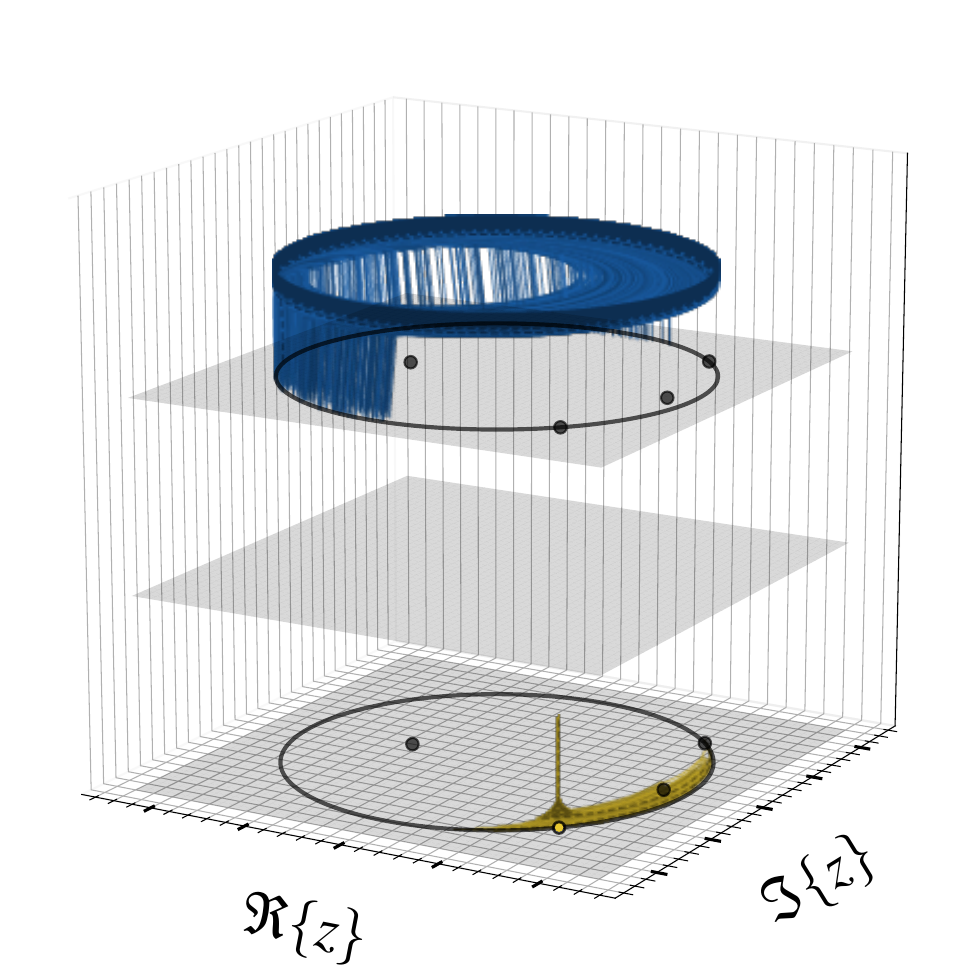}
\caption{$\tau = 0.9$}
\end{subfigure}%

\begin{subfigure}{\localcolumnwidth}
\includegraphics[width=\linewidth]{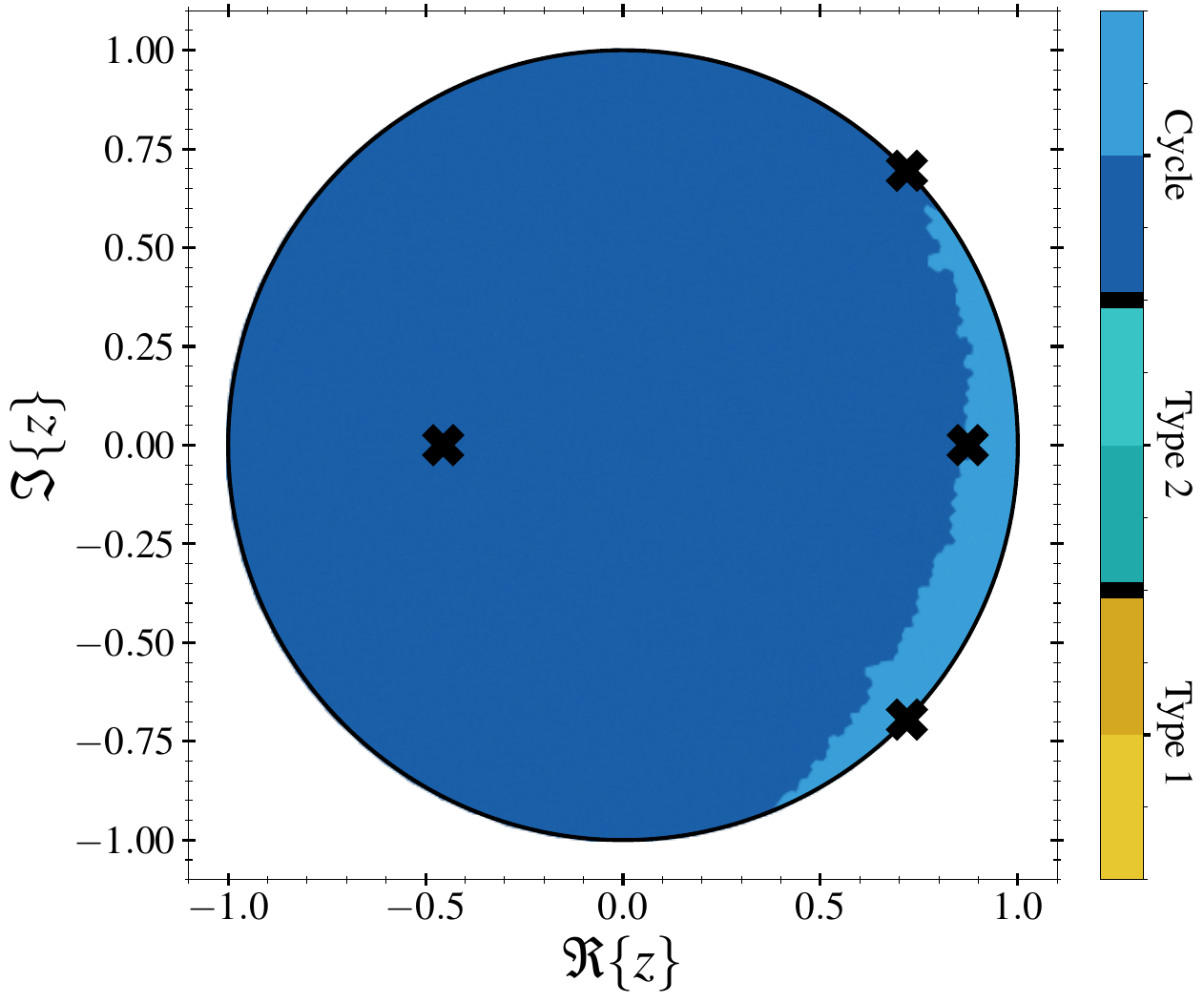}
\includegraphics[width=\linewidth]{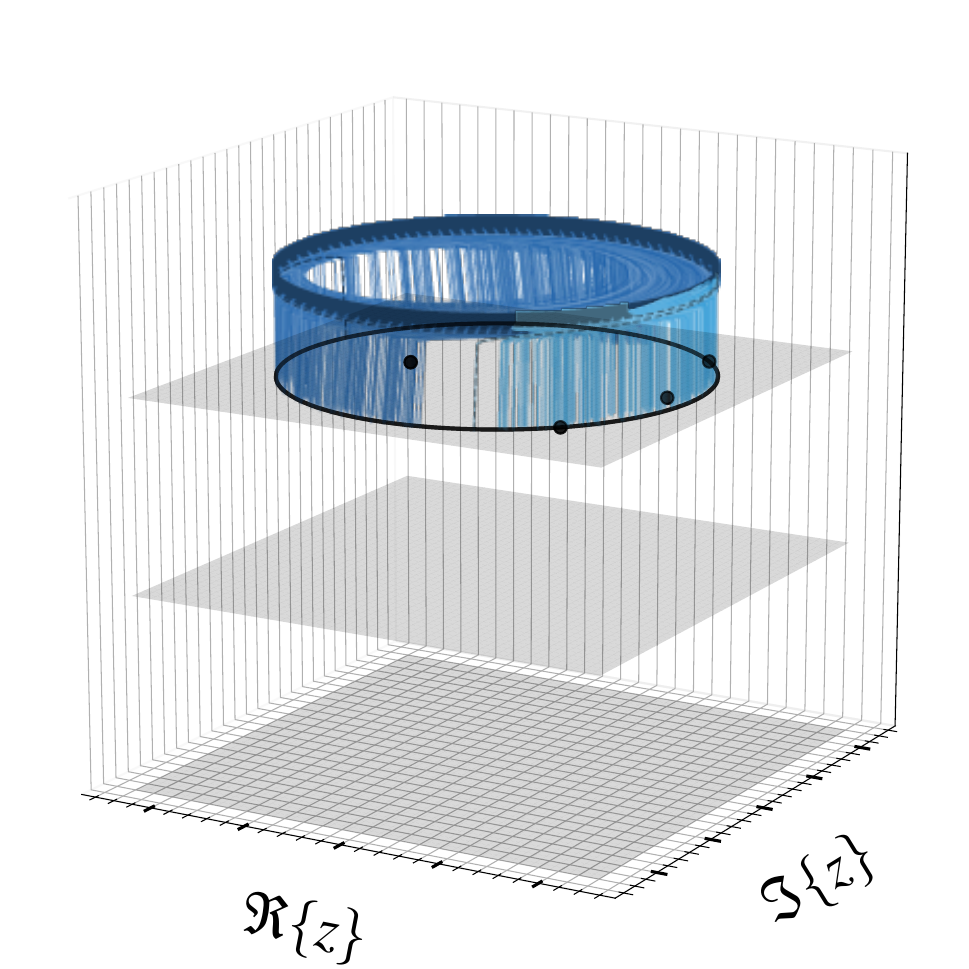}
\caption{$\tau = 1.25$}
\end{subfigure}
\begin{subfigure}{\localcolumnwidth}
\includegraphics[width=\linewidth]{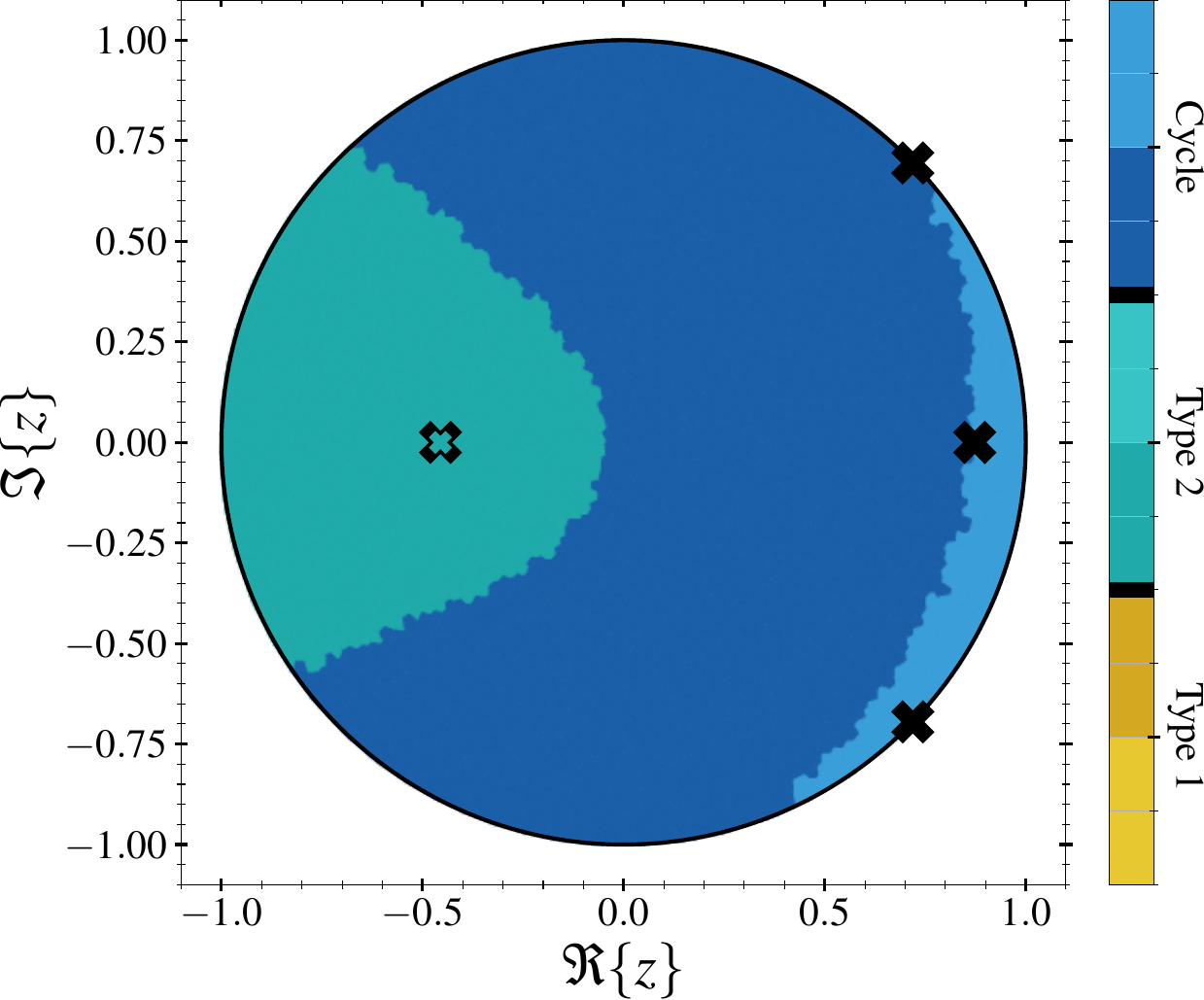}
\includegraphics[width=\linewidth]{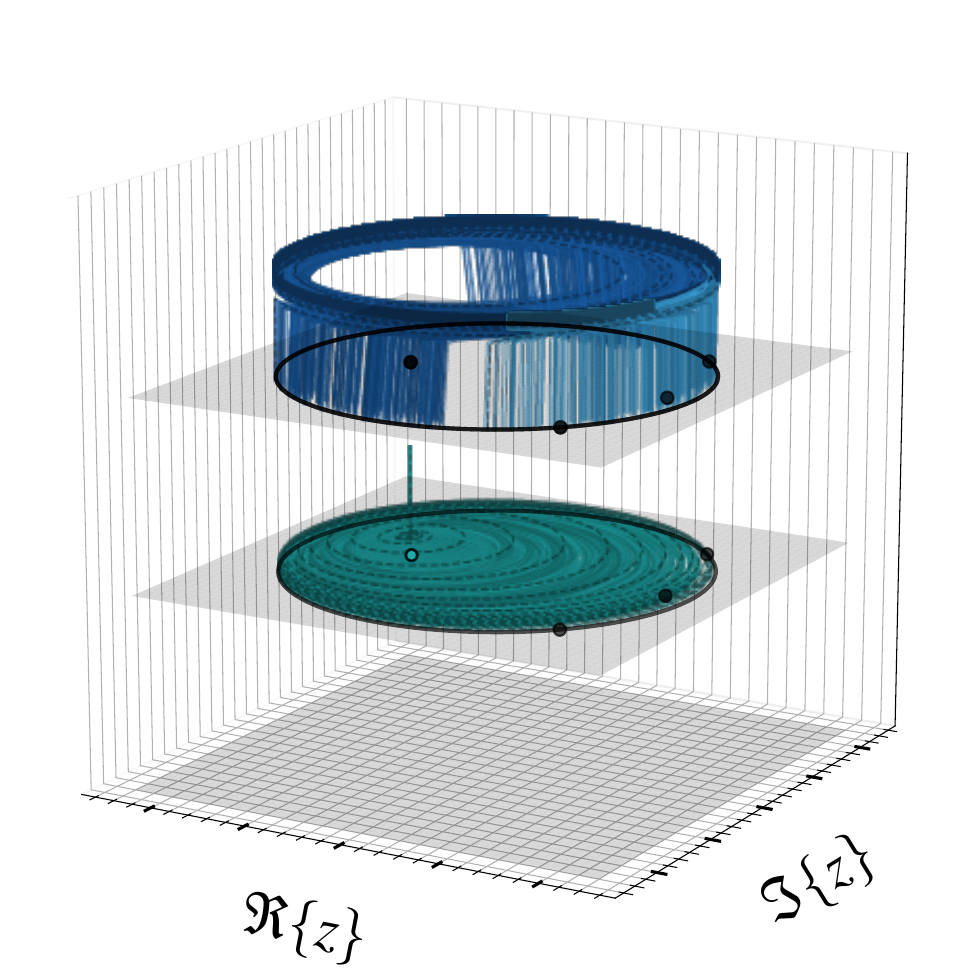}
\caption{$\tau = 1.4$}
\end{subfigure}
\begin{subfigure}{\localcolumnwidth}
\includegraphics[width=\linewidth]{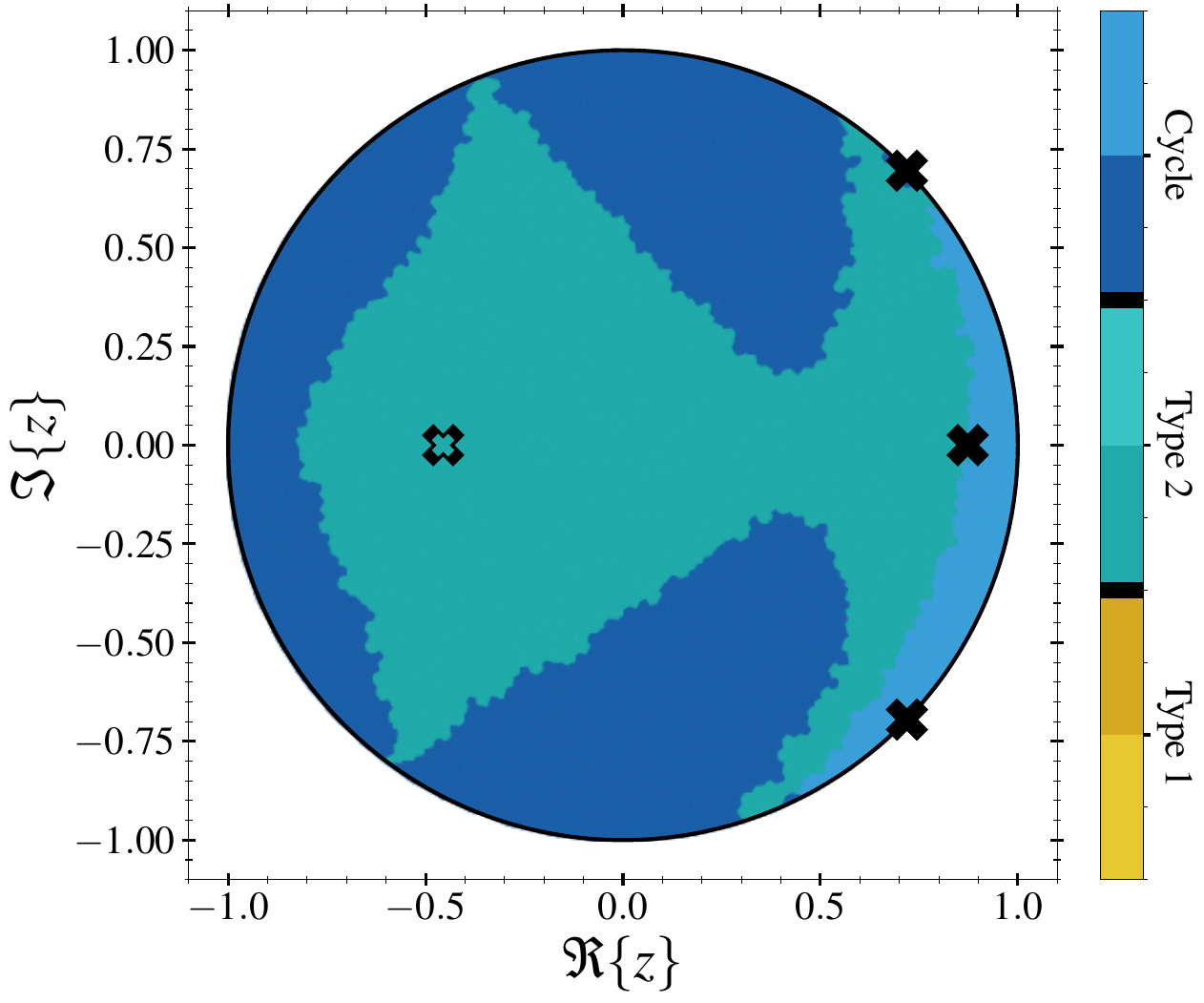}
\includegraphics[width=\linewidth]{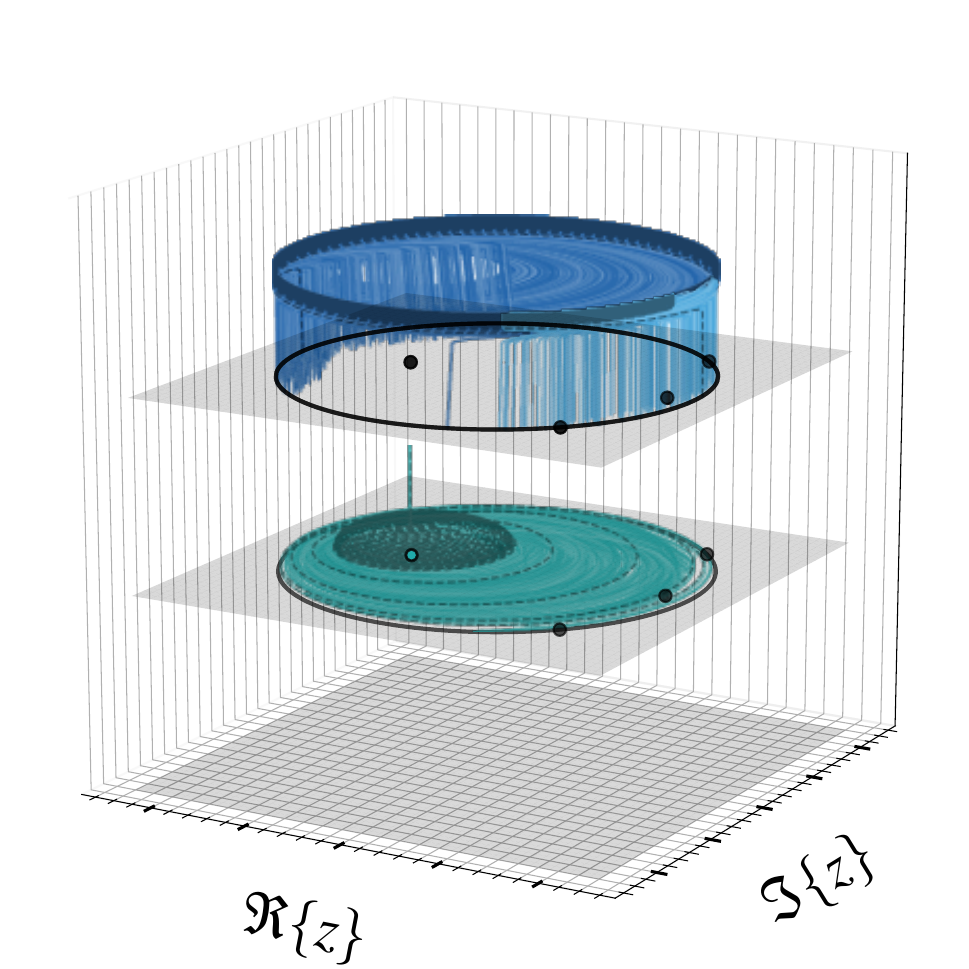}
\caption{$\tau = 1.55$}
\end{subfigure}
\begin{subfigure}{\localcolumnwidth}
\includegraphics[width=\linewidth]{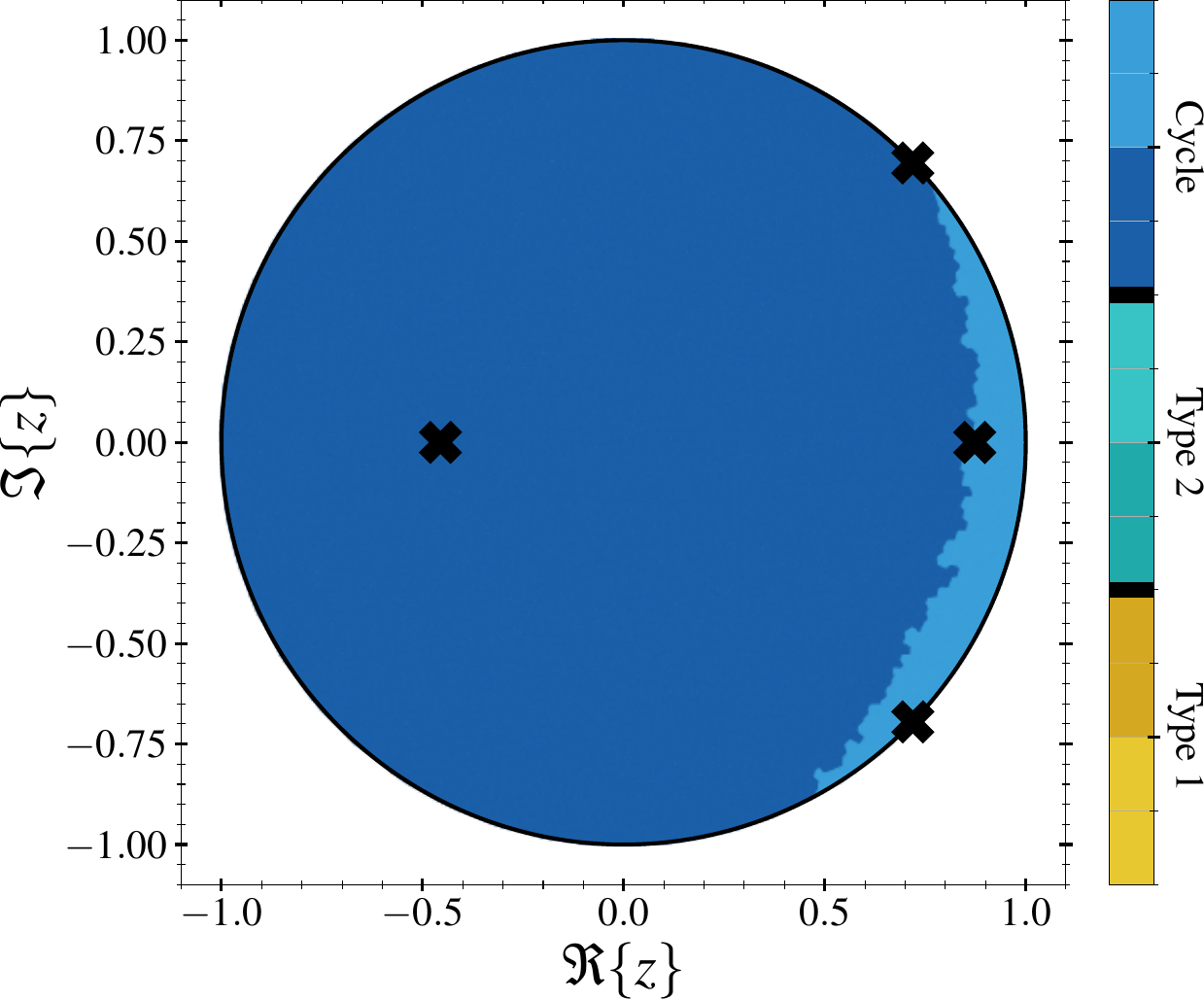}
\includegraphics[width=\linewidth]{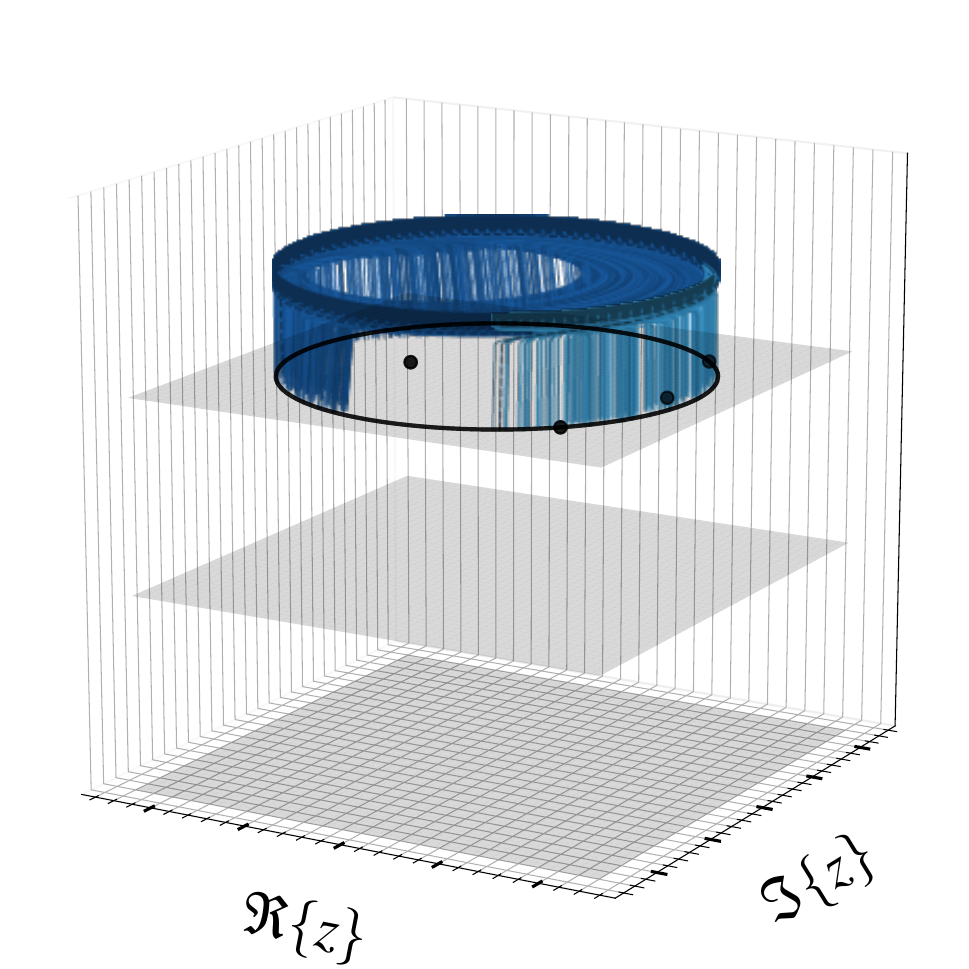}
\caption{$\tau = 2$}
\end{subfigure}
\caption{Set 4: (top) Regions of attraction and (bottom) trajectories for $(\kappa,
\eta) = (3, -0.4)$. The trajectories are shown for all types of behavior of interest:
(top) limit cycles; (middle) type 2 equilibria; and (bottom) type 1 equilibria. }
\label{fig:num.set4}
\end{figure}

\newpage
\section{Conclusions}
\label{sec:conclusions}

We analyzed the local dynamics of an infinite all-to-all coupled network of identical
theta neurons with delayed synaptic interaction. Using the Watanabe--Strogatz reduction,
together with the thermodynamic-limit assumption of uniformly distributed constants of
motion, the network dynamics were reduced to a complex delayed equation or, equivalently, to
a two-variable macroscopic system with memory. The equilibria of this reduced system are
made up of two geometrically distinct families: type 1 equilibria on the unit circle and
type 2 equilibria on the real axis.

For the type 1 family, we showed that equilibria on the upper semicircle are always
unstable and those on the lower semicircle have delay-independent stability when $\kappa
< 0$. Delay-induced destabilization of a type 1 equilibrium can occur only for $\kappa >
0$ and $\eta < 0$. In the discrete-delay case, this destabilization occurs through a
Hopf bifurcation at an explicitly computable critical delay. The numerical evaluation of
the first Lyapunov coefficient in the considered parameter region indicates that this Hopf
bifurcation is supercritical.

For the type 2 family, the delay-free saddle remains unstable for all admissible delay
kernels. The delay-free center is more sensitive to the delay: for small positive delays
it becomes asymptotically stable when $\kappa>0$ and unstable when $\kappa<0$. For the
weak and strong Gamma kernels considered here, the type 2 Hopf condition is not
satisfied in the chosen parameter range. In contrast, the Dirac kernel admits an
infinite sequence of Hopf thresholds with alternating crossing directions and,
therefore, allows stability switching of the type 2 branch.

The numerical simulations for the discrete-delay system support the local analysis. They
illustrate stability switching of a single type 2 equilibrium in the first
quadrant of the
$(\kappa,\eta)$ parameter plane, attracting periodic dynamics in a second-quadrant region where the local
equilibrium classification alone does not detect a stable branch, delay-independent stability in the third quadrant, and interaction between type 1 and type 2 Hopf mechanisms in the
fourth quadrant. These simulations also
show that the effect of the delay is strongly parameter-dependent: in some regimes the
phase portrait changes only slightly over the sampled delays, while in others the delay
produces transitions between stable equilibria, coexistence regimes, and stable periodic
orbits.

The stability results obtained for the reduced system should be interpreted as
macroscopic stability results for the original infinite network of theta neurons. Under the
Watanabe--Strogatz reduction, and after passing to the thermodynamic limit with
uniformly distributed constants of motion, the phase distribution of the
population is parametrized by the complex order parameter \(z(t)\). Hence an
equilibrium \(z^\star\) of the reduced delay differential equation represents a stationary
population state of the original network: type~1 equilibria on \(|z|=1\)
correspond to fully synchronized states, while type~2 equilibria
\(z^\star\in(-1,1)\) correspond to nonsynchronized stationary phase
distributions. Local stability, instability, and Hopf bifurcation of
\(z^\star\) therefore describe the local stability, instability, and emergence
of collective oscillations of the associated macroscopic population state.
These conclusions hold on the Watanabe--Strogatz/Ott--Antonsen invariant family selected by the
uniform distribution of constants of motion. They should not be interpreted as
global stability statements for arbitrary phase distributions or arbitrary
perturbations of the full infinite-dimensional network.

Natural directions for future work include extending the present analysis to
nonuniform constants of motion, heterogeneous theta-neuron populations, finite-size
effects, and broader classes of delay kernels.

\noindent \textbf{Data Availability}.
The software developed and used in this study is openly available in the Zenodo repository at
\href{https://doi.org/10.5281/zenodo.20718310}{https://doi.org/10.5281/zenodo.20718310}.
This archive (version 2026.1) includes all source code and documentation required to
reproduce the results, and is released under the MIT License.

\newpage
\appendix

\section{Proof of \Cref{prop:general}}
\label{ax:prop.general.proof}

\begin{proof}
\leavevmode
\begin{enumerate}
\item[i.]
From~\Cref{rem:laplace.admissible}, we have that $L(0) = 1$, so $\Delta_p(0, \tau) = a - b$.
Hence, $s = 0$ is a root if and only if $a = b$. Furthermore, we have that
\[
\pd{}{s} \Delta_p(s,\tau)=p s^{p-1}-b\tau L'(s\tau).
\]

From~\Cref{rem:laplace.admissible}, we also have that $L'(0) = -1$, so it follows that
\[
\pd{}{s} \Delta_p(0, \tau) =
\begin{cases}
1 + b \tau, & p=1, \\
b \tau, & p \ge 2.
\end{cases}
\]

This gives the simplicity conditions stated in the proposition for the root $s=0$.

\item[ii.]
Let \(f_\tau(x) \coloneq \Delta_p(x, \tau)\) restricted to $x \in [0, \infty)$. As
\(L\) is continuous on \([0,\infty)\), the function \(f_\tau\) is also continuous on
$[0, \infty)$ and, under the assumption $a < b$, satisfies \(f_\tau(0) = a - b < 0\).
Moreover, because \(0<L(x)\le 1\), it follows that \(f_\tau(x)\to+\infty\), as
$x\to\infty$. Hence, by the intermediate value theorem, \(f_\tau\) has at least one
positive root.

\item[iii.]
Consider the case of the Dirac kernel, where $L(s) = e^{-s}$. If $p = 1$, setting
$s = \ii \omega$, with $\omega > 0$ in~\eqref{eq:char.gen} gives
\begin{equation} \label{eq:prop.general.case3}
\Delta_1(\ii \omega, \tau) = \ii \omega + a - b e^{-\ii \omega \tau} = 0
\implies
\ii \omega + a=b e^{-\ii \omega\tau}.
\end{equation}

Taking the absolute value on both sides gives $\omega^2 = b^2 - a^2$. Hence, purely
imaginary roots exist if and only if $|a| < |b|$, in which case $\omega = \sqrt{b^2 - a^2}$.
Since $a > b$, this implies $b < 0$. Therefore, from~\eqref{eq:prop.general.case3}, we have that
\[
\cos(\omega\tau)=\frac{a}{b},
\qquad \text{and} \qquad
\sin(\omega\tau)=-\frac{\omega}{b} > 0,
\]
which gives the critical delays
\[
\tau_n = \frac{\arccos(a/b) + 2 n \pi}{\sqrt{b^2 - a^2}},
\qquad n \in \{0, 1, \dots\}.
\]

Then, if we consider $s(\tau)$ as a function of the delay, differentiating
$\Delta_1(s(\tau), \tau) = 0$ with respect to $\tau$ gives
\[
s'(\tau) = -\frac{b s e^{-s \tau}}{1+b\tau e^{-s\tau}}.
\]

Evaluating the condition above at $(\ii \omega, \tau_n)$, we obtain the transversality
condition
\[
\Re s'(\tau_n) = \frac{\omega^2}{(1+a\tau_n)^2+(\omega\tau_n)^2}>0.
\]

For the second part, we consider $p = 2$ and $s = \ii \omega$, with $\omega > 0$. Substituting
into~\eqref{eq:char.gen} gives $-\omega^2+a=be^{-\ii \omega\tau}$. Since $b \ne 0$ and
the left-hand side is real, we must have that
\[
\omega \tau = n \pi
\implies \omega_n^2 = a - (-1)^n b,
\]
for all $n \in \{1, 2, \dots\}$. Thus, purely imaginary roots exist exactly when \(a - (-1)^n b > 0\).
In that case, the critical delays are given by
\[
\tau_n = \frac{n\pi}{\sqrt{a - (-1)^n b}},
\qquad n\in \{1, 2, \dots\}.
\]

As before, we can differentiate $\Delta_2(s(\tau), \tau) = 0$ with respect to $\tau$
and evaluate at $(\ii \omega, \tau_n)$. This gives the transversality condition
\[
\Re s'(\tau_n) = -\frac{2b\,\omega_n^2(-1)^n}{(b\tau_n)^2+4\omega_n^2},
\]
which leads to \(\sign (\Re s'(\tau_n)) = (-1)^{n + 1} \sign (b)\), for \(n \in \{1, 2, \dots\}\).
\end{enumerate}
\end{proof}

\section{Dirac kernel: existence of a periodic orbit on the unit circle}
\label{ax:dirac.periodic.orbit}

\begin{proposition}
\label{prop:unit.circle.periodic.orbit}
Assume that $l_\tau(t) = \delta(t - \tau)$, with $\tau \ge 0$, is a Dirac delay kernel and
\begin{equation}\label{eq:uniform.positive.speed}
\kappa<0,\qquad\eta+4\kappa>0.
\end{equation}

Then, there exists at least one periodic solution on the invariant circle \(\rho=1\).
\end{proposition}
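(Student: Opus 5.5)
Restricted to the invariant circle $\rho=1$, the second equation of \eqref{eq:sys.2d} is a scalar delay equation for $\phi$ alone. I would first check that $\rho=1$ is invariant: the factor $1-\rho^2$ in the $\dot\rho$ equation vanishes there. Then I would rewrite the $\phi$-equation. On $\rho=1$,
\[
I=\tfrac32-2\cos\phi+\tfrac12\cos 2\phi=(1-\cos\phi)^2\in[0,4].
\]
For the Dirac kernel the $\phi$-equation becomes
\[
\dot\phi(t)=\bigl(1-\cos\phi(t)\bigr)+\bigl(1+\cos\phi(t)\bigr)\bigl(\eta+\kappa I(t-\tau)\bigr).
\]
Since $\kappa<0$ and $0\le I\le 4$, we have $\eta+\kappa I(t-\tau)\ge m\coloneq\eta+4\kappa>0$ and $\eta+\kappa I(t-\tau)\le \eta$. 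Writing the right-hand side as $2\bigl[\tfrac{1-\cos\phi}{2}+\tfrac{1+\cos\phi}{2}(\eta+\kappa I)\bigr]$, a convex combination, gives the uniform bounds
\[
0<c_-\coloneq 2\min\{1,m\}\le\dot\phi(t)\le c_+\coloneq 2\max\{1,\eta\}.
\]
These hold for all $t>0$ and for any history. The lift $\phi(t)$ is therefore strictly increasing with speed bounded away from $0$ and $\infty$. For $\tau=0$ this is already enough: the circle contains no equilibria, so it is itself a periodic orbit. The remaining case is $\tau>0$.

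For $\tau>0$ I would build a Poincaré-type map on histories and apply Schauder's fixed point theorem. Let $\mathcal K$ be the set of continuous $\psi:[-\tau,0]\to\mathbb R$ with $\psi(0)=0$ and $c_-(t_2-t_1)\le\psi(t_2)-\psi(t_1)\le c_+(t_2-t_1)$ for $t_1<t_2$. This set is convex, closed and bounded. By Arzelà--Ascoli it is compact in $C([-\tau,0])$.

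Fix an integer $k$ with $2\pi k>c_+\tau$. For $\psi\in\mathcal K$, solve the equation with history $\psi$; the solution is defined for all $t\ge0$ by the method of steps. Let $T(\psi)$ be the unique time with $\phi(T)=2\pi k$. It exists and is unique by monotonicity, and $T\ge 2\pi k/c_+>\tau$. Set $P(\psi)(\theta)\coloneq\phi(T(\psi)+\theta)-2\pi k$ for $\theta\in[-\tau,0]$. Because $T-\tau>0$, this segment lies in the region where the slope bounds hold, so $P(\mathcal K)\subset\mathcal K$.

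The step I expect to need the most care is the continuity of $P$. Continuous dependence of DDE solutions on the initial history, uniformly on compact time intervals, follows from Lipschitz continuity of the right-hand side in $(\phi(t),\phi(t-\tau))$ and Gronwall. Continuity of $T(\psi)$ then follows because the crossing of the level $2\pi k$ is transversal, with $\dot\phi\ge c_->0$; this is an implicit-function-style or elementary monotonicity argument. Composing the two gives continuity of $P$ in the sup norm.

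Schauder's theorem then yields $\psi^\ast=P(\psi^\ast)$. The corresponding solution satisfies $\phi(t+T)=\phi(t)+2\pi k$ on $[-\tau,0]$. Since the equation is autonomous and $2\pi$-equivariant in $\phi$, uniqueness of solutions propagates this identity to all $t\ge-\tau$. Hence $z(t)=e^{\ii\phi(t)}$ is a $T$-periodic solution on $\rho=1$, and it is nonconstant because $\dot\phi>0$.
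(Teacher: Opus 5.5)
Your proof is correct and follows essentially the same route as the paper. Both use the same speed bounds $2\min\{1,\eta+4\kappa\}\le\dot\phi\le 2\max\{1,\eta\}$, the same compact convex set of slope-bounded histories, a first-return Poincar\'e map, Schauder's theorem, and uniqueness to propagate the fixed-point identity. The only differences are minor. The paper stops at the first return to $2\pi$; this already works when $T<\tau$, because the part of the shifted segment that lies in the initial history satisfies the slope bounds by membership in $X$. Your choice of level $2\pi k$ with $2\pi k>c_+\tau$, and your separate treatment of $\tau=0$, are therefore harmless extra precautions rather than necessary steps.
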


\begin{proof}
Since $\dot{\rho}|_{\rho = 1} = 0$ by~\eqref{eq:sys.2d}, the circle $\rho = 1$ is invariant.
On the invariant circle, the phase equation from~\eqref{eq:sys.2d} reduces to
\begin{equation}\label{eq:phase.unit.circle.dirac}
\dot\phi(t)=F(\phi(t), \phi(t - \tau)) \coloneq{}
[\eta + \kappa(1-\cos \phi(t - \tau))^2 - 1] (1 + \cos \phi(t)) + 2.
\end{equation}

We first establish uniform bounds on the right-hand side $F$. Since $\kappa <
0$ by~\eqref{eq:uniform.positive.speed} and $(1 - \cos \phi(t - \tau))^2 \in [0, 4]$, a
direct computation gives
\[
2 \min\{1, \eta + 4 \kappa\} \coloneq m \le
F(\phi(t), \phi(t - \tau)) \le
M \coloneq 2 \max\{1, \eta\},
\]
where $m> 0$ by~\eqref{eq:uniform.positive.speed}. Therefore,
every solution of~\eqref{eq:phase.unit.circle.dirac} satisfies, for all $t$,
\begin{equation}\label{eq:phase.speed.bounds}
m \le \dot\phi(t) \le M.
\end{equation}

Let \(X\subset C([-\tau,0],\mathbb{R})\) be the set of continuous functions \(\psi\),
such that \(\psi(0)=0\) and
\[
m(\theta_2-\theta_1)\le \psi(\theta_2)-\psi(\theta_1)\le M(\theta_2-\theta_1),
\]
for all $-\tau \le \theta_1 \le \theta_2 \le 0$. The set \(X\) is convex, closed,
bounded, and equicontinuous. Therefore, it is compact in \(C([-\tau,0], \mathbb{R})\) by
the Arzelà--Ascoli Theorem.

Let \(\psi\in X\), and let \(\phi(\cdot;\psi)\) denote the unique
solution~\cite{HaleVerduynLunel1993} of \eqref{eq:phase.unit.circle.dirac} with the
initial history \(\phi(\theta) = \psi\), \(\theta\in[-\tau,0]\). By
\eqref{eq:phase.speed.bounds}, \(\phi(\cdot;\psi)\) is strictly increasing. Therefore,
there exists a unique return time \(T(\psi)>0\), such that
\[
\phi(T(\psi); \psi) = 2 \pi.
\]

Moreover, again by \eqref{eq:phase.speed.bounds},
\[
\frac{2\pi}{M} \le T(\psi) \le \frac{2\pi}{m}.
\]

We define the Poincaré map \(P: X \to C([-\tau, 0],\mathbb{R})\) by
\[
(P \psi)(\theta):=\phi(T(\psi)+\theta;\psi)-2\pi,
\]
which satisfies $(P \psi)(0) = 0$, since \(\phi(T(\psi);\psi)=2\pi\). Moreover,
\[
(P\psi)(\theta_2)-(P\psi)(\theta_1)
=
\phi(T(\psi)+\theta_2;\psi)-\phi(T(\psi)+\theta_1;\psi),
\]
for all \(-\tau\le \theta_1\le \theta_2\le 0\). Finally, by integrating \eqref{eq:phase.speed.bounds},
we obtain
\[
m(\theta_2-\theta_1)\le (P\psi)(\theta_2)-(P\psi)(\theta_1)\le M(\theta_2-\theta_1).
\]

Hence \(P\psi\in X\), i.e., \(P: X \to X\). The map $P$ is continuous: both the solution
$\phi$ and the return time $T$ depend continuously on the initial history $\psi$. The
continuity of $\phi$ with respect to the initial condition is a standard
result~\cite{HaleVerduynLunel1993}. Continuity of $T$ is a consequence
of~\eqref{eq:phase.speed.bounds}, where $\dot{\phi} \ge m > 0$, and the implicit
function theorem. Since \(X\) is compact and convex, Schauder's fixed-point theorem
implies that there exists a fixed point \(\psi_\ast\in X\) such that
\[
P\psi_\ast = \psi_\ast.
\]

Let \(\phi_\ast\) denote the corresponding solution with return time \(T_\ast=T(\psi_\ast)\).
Then,
\[
\phi_\ast(T_\ast+\theta)=\phi_\ast(\theta)+2\pi
\qquad\text{for all }\theta\in[-\tau,0].
\]

Let $u(t) \coloneq \phi_\ast(t+T_\ast)-2\pi$. As the equation
\eqref{eq:phase.unit.circle.dirac} is autonomous and \(2\pi\)-periodic in each phase
argument, the function \(u\) satisfies the same delay equation as \(\phi_\ast\).
Moreover, \(u\) and \(\phi_\ast\) have the same history on \([-\tau,0]\). By uniqueness
of solutions,
\[
\phi_\ast(t+T_\ast)=\phi_\ast(t)+2\pi
\qquad\text{for all }t\ge -\tau.
\]

Finally, let \(z_\ast(t) \coloneq e^{\ii \phi_\ast(t)}\). By the above equation $z_\ast$ is
a \(T_\ast\)-periodic function that satisfies \(|z_\ast(t)| = 1\), for all \(t\). Hence,
$z_\ast$ solves~\eqref{eq:sys.2d} with $\rho = 1$, and is the desired solution on the
unit circle.
\end{proof}

\bibliographystyle{siamplain}
\bibliography{bibliography}

\end{document}

\section{Watanabe--Strogatz transformation}
\label{ax:watanabe.strogatz}

According to Watanabe and Strogatz \cite{watanabe, watanabe1993integrability}, the
system of $N > 3$ coupled theta neurons \eqref{eq:sys.theta} admits a low-dimensional
description, given in terms of three variables, called WS variables, and additional
constants of motion. It follows that the dynamics of an ensemble of identical elements
is effectively confined to a three-dimensional subspace.

As in \cite{laing2015exact}, we define $\omega(t) \coloneq \eta + \kappa I(t) + 1$ and
$H(t) \coloneq \ii(\eta + \kappa I(t) - 1)$. Thus, it is easy to show that system \eqref{eq:sys.theta} can be written as
\begin{equation} \label{eq:sys.theta.H.omega}
\dot{\theta}_j =
    (l_\tau \ast \omega)
    + \Im \left[(l_\tau\ast H) e^{-\ii \theta_j}\right],
\quad j \in \{1, 2, \dots, N\}
\end{equation}
where the argument $t$ has been dropped for simplicity. We use the Watanabe--Strogatz-type
transformation \cite{watanabe}:
\begin{equation} \label{eq:ws.transformation}
\tan \left[\frac{1}{2} (\theta_j(t) - \phi(t))\right]
= \frac{1 - \rho(t)}{1 + \rho(t)} \tan \left[\frac{1}{2} (\psi_j(t) - \psi(t))\right],
 \end{equation}
which is equivalent to the Möbius-type transformation \cite{marvel2009identical}:
\begin{equation} \label{eq:mobius.transformation}
e^{\ii (\theta_j - \phi)}
= \frac{\rho + e^{\ii (\psi_j - \psi)}}{\rho e^{\ii (\psi_j - \psi)} + 1},
\quad \text{ for } j \in\{1, 2, \dots, N\}.
\end{equation}

With these transformations, a standard computation for the WS variables yields the following system of delay differential equations:
\begin{equation} \label{eq:sys.reduced}
\begin{cases}
\dot{\rho} =
    \dfrac{1 - \rho^{2}}{2} \Re\left[(l_\tau\ast H) e^{-\ii \phi}\right], \\
\dot{\phi} =
    (l_\tau\ast \omega)
    + \dfrac{1 + \rho^{2}}{2 \rho} \Im\left[(l_\tau\ast H) e^{-\ii \phi}\right], \\
\dot{\psi} =
    \dfrac{1 - \rho^{2}}{2 \rho} \Im\left[(l_\tau\ast H) e^{-\ii \phi}\right].
\end{cases}
\end{equation}

Here, it is important to note that the function $\omega$ and $H$ can be expressed
in terms of the new variables $(\phi,\rho,\psi)$ and the constants $\psi_j$.
Indeed, if we denote $z = \rho e^{\ii \phi}$, following the same reasoning
as in \cite{laing2018dynamics} and using the transformation
\eqref{eq:mobius.transformation}, we have:
\[
I
= \frac{1}{N} \sum_{j = 1}^N (1 - \cos (\theta_j))^2
= \frac{3}{2} - (z \gamma_1 + \conj{z} \conj{\gamma}_1)
    + \frac{(z^2 \gamma_2 + \conj{z}^2 \conj{\gamma}_2)}{4},
\]
where
\begin{equation} \label{eq:app.gamma}
\gamma_1
= \frac{1}{N \rho} \sum_{j = 1}^{N}
    \frac{\rho + e^{\ii (\psi_j - \psi)}}{1 + \rho e^{\ii (\psi_j - \psi)}}
\quad \text{and} \quad
\gamma_2
= \frac{1}{N \rho^2} \sum_{j = 1}^{N} \left(
    \frac{\rho + e^{\ii (\psi_j - \psi)}}{1 + \rho e^{\ii (\psi_j - \psi)}}
\right)^2.
\end{equation}

Therefore, the dynamics of the $N$-dimensional system \eqref{eq:sys.theta} is
completely described by the three variables $(\phi, \rho, \psi)$, with $0 < \rho < 1$,
plus the constants of motion $\psi_j, j \in \{1, 2, \dots, N\}$, which obey three
additional constraints, so that $N-3$ of them are independent. Following
\cite{watanabe,laing2018dynamics}, there are at least two possible ways of considering
the constraints, which will be detailed below:
\begin{itemize}
    \item Set $\rho(0) = \phi(0) = \psi(0) = 0$, so that $\psi_j = \theta_j(0)$,
    for any $j \in \{1, 2, \dots, N\}$. As pointed out in \cite{watanabe}, this
    approach is not suitable if one wants to understand the global behavior of
    the system, as each initial condition $\theta_j(0)$ yields different system
    constants $\psi_j$.

    \item The constraints are imposed directly on the system constants
    \cite{pikovsky2008partially,laing2018dynamics}:
    \begin{equation} \label{eq:constraints}
    \sum_{j = 1}^N e^{\ii \psi_j} = 0
    \quad \text{and} \quad
    \Re\left[\sum_{j = 1}^N e^{2 \ii \psi_j}\right] = 0.
    \end{equation}

    Consequently, this choice is more appropriate for a global dynamical analysis.
\end{itemize}

In the case of an infinite number of identical neurons, considering evenly
(uniformly) spaced constants $\psi_j = \frac{2 j \pi}{N}$, for $j \in \{1, 2, \dots,
N\}$ which satisfy the constraints \eqref{eq:constraints}, we can further reduce
the dynamics of system \eqref{eq:sys.reduced} to a single differential equation
with distributed delay in the complex domain, by considering $z = \rho e^{\ii \phi}$.
By passing to the limit $N \to \infty$ for uniformly spaced constants $\psi_j$, the function $I$ becomes
\begin{equation}
I =I(z)= \frac{3}{2}
    - (z + \bar{z})
    + \frac{(z^2 + \bar{z}^2)}{4}
= \frac{3}{2} - 2 \rho \cos \phi + \frac{1}{2} \rho^2 \cos(2 \phi),
\end{equation}
and, hence, the first two equations of system \eqref{eq:sys.reduced} decouple
from the third equation.

Therefore, in the case of an infinite network of all-to-all coupled identical theta neurons, we essentially investigate the dynamics of
the two-dimensional system:
\begin{equation} \label{eq:initial.system}
\begin{cases}
\dot{\rho}
    = \dfrac{1 - \rho^{2}}{2} \Re\left[(l_\tau \ast H) e^{-\ii \phi}\right], \\
\dot\phi
    = (l_\tau \ast \omega)
    + \dfrac{1 + \rho^{2}}{2 \rho} \Im\left[(l_\tau \ast H) e^{-\ii \phi}\right],
\end{cases}
\end{equation}
which is equivalent to system \eqref{eq:sys.2d}, as well as to equation \eqref{eq:complex} in the complex domain.